\def\BState{\State\hskip-\ALG@thistlm}
\newcommand{\true}{\textit{true}}
\newcommand{\expr}{\textit{expr}}
\newcommand{\nat}{\mathbb{N}}
\newcommand{\nonzerointeger}{\mathbb{Z}\setminus \{0\}}
\newcommand{\integer}{\mathbb{Z}}
\newcommand{\real}{\mathbb{R}}
\newcommand{\dx}{\Delta a}
\newcommand{\dy}{\Delta b}
\newcommand{\sqrtexp}{\log_2(\log_2(2^{p+3}+1))}
\newcommand{\om}{\overline{m}}
\newcommand{\on}{\overline{n}}
\algnewcommand{\algorithmicgoto}{\textbf{go to}}
\algnewcommand{\Goto}[1]{\algorithmicgoto~\ref{#1}}
\algnewcommand{\LineComment}[1]{\State \(\triangleright\) #1}
\newtheorem{mytheorem}{Theorem}
\numberwithin{mytheorem}{section}
\newtheorem{definition}{Definition}
\newtheorem{proposition}{Proposition}
\newtheorem{lemma}{Lemma}
\newcommand{\taylorsin}[3] {\sum_{i=#1}^{#2}\frac{(-1)^i}{(2i+1)!}{#3}^{2i+1}}
\newcommand{\taylorcos}[3] {\sum_{i=#1}^{#2}\frac{(-1)^i}{(2i)!}{#3}^{2i}}
\newcommand{\taylorarctan}[3] {\sum_{i=#1}^{#2}\frac{(-1)^i}{2i+1                                                                                                                                                                                                                                                                                                                                                                                                                                                                       }{#3}^{2i+1}}
\newcommand{\lnsumu}[2] {\frac{#1-1}{#2} \sum_{i=0}^{#2-1}\frac{1}{1+\frac{i}{#2}(#1-1)}}
\newcommand{\lnsuml}[2] {\frac{#1-1}{#2} \sum_{i=1}^{#2}\frac{1}{1+\frac{i}{#2}(#1-1)}}
\newcommand{\arctansumu}[2] {\frac{#1}{#2} \sum_{i=0}^{#2-1}\frac{1}{1+(\frac{i}{#2})^2{#1}^2}}
\newcommand{\arctansuml}[2] {\frac{#1}{#2} \sum_{i=1}^{#2}\frac{1}{1+(\frac{i}{#2})^2{#1}^2}}
\DeclareMathOperator{\arccot}{arccot}
\begin{document}
\tikzstyle{block} = [draw, fill=white, rectangle, 
    minimum height=3em, minimum width=6em]
\tikzstyle{sum} = [draw, fill=white, circle, node distance=1cm]
\tikzstyle{input} = [coordinate]
\tikzstyle{output} = [coordinate]
\tikzstyle{pinstyle} = [pin edge={to-,thin,black}]

\title{Exact Real Arithmetic with \\
Perturbation Analysis and Proof of Correctness}

\author{Sarmen Keshishzadeh}
\author{Jan Friso Groote}
\affil{Department of Mathematics and Computer Science\\
Eindhoven University of Technology\\
Den Dolech 2, 5612 AZ Eindhoven, The Netherlands}



\maketitle

\begin{abstract}
In this article, we consider a simple representation for real numbers and propose top-down procedures to approximate various algebraic and transcendental operations with arbitrary precision. Detailed algorithms and proofs are provided to guarantee the correctness of the approximations. Moreover, we develop and apply a perturbation analysis method to show that our approximation procedures only recompute expressions when unavoidable. 

In the last decade, various theories have been developed and implemented to realize real computations with arbitrary precision. Proof of correctness for existing approaches typically consider basic algebraic operations, whereas detailed arguments about transcendental operations are not available. Another important observation is that in each approach some expressions might require iterative computations to guarantee the desired precision. However, no formal reasoning is provided to prove that such iterative calculations are essential in the approximation procedures. In our approximations of real functions, we explicitly relate the precision of the inputs to the guaranteed precision of the output, provide full proofs and a precise analysis of the necessity of iterations. 
\end{abstract}


\section{Introduction}
\label{sec:intro}
Various scientific disciplines use computations involving real numbers to model and reason about different phenomena in the world. Real numbers are typically approximated by floating point numbers in scientific calculations. Round-off errors are inevitable in such approximations and they might build up into catastrophic errors in some cases. Exact real arithmetic approaches address this issue by devising computation procedures that given an expression and a precision requested by the user produce an output that is guaranteed to meet the precision requirement. 

Several approaches \cite{M01,MPFR} to exact real arithmetic are based on iterative bottom-up calculations. Given an expression and a desired precision, bottom-up approaches typically start with calculating the inputs with an arbitrary precision higher than the requested precision. Then, the sub-expressions are evaluated in a bottom-up way. After evaluating the sub-expressions of each level, the guaranteed precision is passed to the higher level. These calculations proceed until the main expression is calculated and its guaranteed precision is determined. If the precision obtained for the expression is not adequate, the computation restarts with increased precisions for the inputs. 

In contrast, various authors \cite{GL00,M05} have proposed top-down approaches to exact real arithmetic. In top-down approaches, the required precision of each sub-expression is determined based on the precision required for its immediate parent expression. For certain types of expressions, the required precision of the sub-expressions can be calculated immediately. However, some expressions may require to first obtain additional information about the magnitude of the values of their sub-expressions before determining their required precision. Thus, in general, it might be necessary to recompute certain expressions. 



The main benefit of top-down approaches is that they exploit the structure of a given expression to estimate the required precision of the sub-expressions. In this context, one would ideally like to have top-down approximations for algebraic and transcendental functions such that 
\begin{inparaenum}[1)]
\item
the approximations are proven to be correct and 
\item
iterative calculations are avoided unless they are proven to be necessary.
\end{inparaenum}

In several studies \cite{GL00,O08}, proofs of correctness for algebraic operations are available. However, the arguments about transcendental functions provide little insight about the correctness of the approximations and the effect of these operations on precision. Taylor expansions are the most prominent way to approximate transcendental functions. Calculations with Taylor expansions are typically restricted to a base interval; range reduction identities are used to extend the computations to the complete domain of a function. Proofs of correctness for transcendental functions are limited to the base interval \cite{P98,GL00,O08}, whereas little attention is given to the general case where the computations introduced by range reduction identities influence the output precision.  

The second desired property for a top-down approach is related to the iterative nature of the computations. As discussed above, bottom-up and top-down approaches rely on iterative computation schemes. However, no formal reasoning is provided to prove that such iterative calculations are essential in the approximation procedures.  

In this article, we consider a simple representation for real numbers and propose a top-down approach to approximate various algebraic and transcendental functions with arbitrary precision. For each operation, we describe an approximation procedure and relate the precision of the inputs to the guaranteed precision for the output. To guarantee the correctness of our top-down approach, we provide detailed proofs of correctness for the proposed approximations. 

To identify computational problems that require iterative calculations in our top-down approach, we have developed a perturbation analysis method. Our analysis describes the influence of errors in the inputs of a computational problem on the output precision. We apply perturbation analysis to show that our approximation procedures only recompute expressions when this is unavoidable. 

\paragraph{Overview}
We discuss different approaches to defining computability of functions in Section~\ref{sec:computability}. In Section~\ref{sec:representation} we introduce a representation for real numbers and specify the syntax of the expressions that we consider in our computations. To analyze computational problems in this setting, a perturbation analysis method is introduced in Section~\ref{sec:condition}. In Section~\ref{sec:algebraic_operations} we discuss our approximations of algebraic operations. We approximate transcendental functions using Riemann sums and Taylor expansions in Section~\ref{sec:riemann} and \ref{sec:taylor}, respectively. Section~\ref{sec:related_work} contains discussions about related work. In Section~\ref{sec:conclusion} we draw some conclusions and suggest directions for future research.

\section{Computable Real Functions}
\label{sec:computability}
Real arithmetic is concerned with performing computations on  real numbers. In order to do calculations with real numbers, it is necessary to define what it means for an  operation to be computable. In this section we briefly discuss different approaches to defining computability. 

Since real numbers are infinite objects, one can use infinite streams from a finite alphabet $\Sigma$ to represent them. This gives rise to a definition of computability called Type-$2$ Theory of Effectivity (TTE, \cite{W12}). In TTE computable operations are defined in terms of functions $f:\Sigma^{\omega}\rightarrow \Sigma^{\omega}$ that receive infinite words as input and produce infinite words as output. An essential property of a Type-$2$ computable function is the finiteness property \cite{W12}. This property indicates that for a computable function $f$, any finite prefix of the output $f(x)$ is computable by some finite portion of the input $x$.

An alternative definition of computability has been introduced by the Russian school of constructive analysis \cite{M54,K84}. In this definition, computable operations are defined in terms of functions $f:\Sigma^*\rightarrow \Sigma^*$ that receive finite words as input and produce finite words as output. This approach is sometimes referred to as Type-$1$ computability. A function $f$ is Type-$1$ computable if there is a Turing machine that transforms any finite input string $x\in \Sigma^*$ to the finite output strings $f(x)\in \Sigma^*$. Type-$1$ machines provide a natural way to define computability on, for instance, rational numbers and finite graphs.

In both Type-$1$ and Type-$2$ approaches, $\Sigma$ depends on the concrete representation that we use for input/output objects. For instance, one can use the binary signed-digit representation to represent real numbers in the inputs/outputs of computations. 

The relation between Type-$1$ and Type-$2$ computable functions has been investigated in various studies. It is known that restricting the domain of a Type-$2$ computable function to finite streams results in a Type-$1$ computable function \cite{H96,RW99}. However, not every Type-$1$ function can be obtained by restricting some Type-$2$ computable function \cite{RW99,KLS00}. To illustrate this, we consider a function $f_1: \{0,1\}^*\rightarrow \{0,1\}^*$ defined as follows:
\begin{align*}
f_1(s)=
\begin{cases}
0& \text{if } s=0^k\\
1& \text{if } s=0^k1s'
\end{cases}
\end{align*}
where $k\in \nat$ is the length of the longest prefix of zeros in $s$ and $s'\in \Sigma^*$ is a finite suffix of $s$ in the second case. The function $f_1$ performs computations on finite strings and one can construct a Type-$1$ Turing machine to compute this function. By extending the domain of $f_1$ to infinite strings, we obtain $f_2: \Sigma^{\omega} \rightarrow \Sigma^{\omega}$ such that:
\begin{align*}
f_2(s)=
\begin{cases}
0& \text{if } s=0^\omega\\
1& \text{if } s=0^k1s'
\end{cases}
\end{align*}
The function $f_2$ is not computable with a Type-$2$ Turing machine; it is not possible to write $0$ in the output after reading a finite prefix from the input. The interested reader can refer to \cite{H96} for more details about the relation between Type-$1$ and Type-$2$ computable functions. 

In addition to Type-$1$ and Type-$2$, one can also consider a third approach to defining computability based on certain finite structures that provide precise descriptions for specific classes of real numbers. For instance, Lagrange's theorem on continued fractions indicates that the real numbers whose continued fraction is periodic are the quadratic irrationals. Based on this observation, one can define computability in terms of functions $f$ that given a finite and precise representation of $x$ produce a finite and precise representation of $f(x)$.   

The exact real arithmetic approach that we introduce in this article is based on Type-$2$ computability. In Section~\ref{sec:representation} we discuss a representation for real numbers in terms of rational numbers that are coupled with a notion of precision. Our approximations for arithmetic operations rely on the finiteness property of computable functions. Thus, for a given computational problem, a desired precision for the output is obtained based on sufficiently good approximations of the inputs. 

\section{Real Numbers: Representation \& Operations}
\label{sec:representation}
In this section we first discuss our representation of real numbers and then describe the syntax of the expressions that we aim to calculate in our setting. 

Since real numbers are infinite objects, a finite representation of an arbitrary real number $x$ can only approximate $x$ with a certain precision. In scientific measurements and calculations, the amount of error that we commit in approximations is measured by an absolute or relative error. In practice, an absolute error is of little use. Since numbers tend to have very different orders of magnitude, it is the relative error that shows the significance of the lost digits in measurements or calculations. Hence, in our setting we use a representation based on the relative error.
\begin{definition}\label{def:realrep}
A real number $x$ is represented by a tuple $(m,n,p)$ such that:
\begin{align*}
|x-\frac{m}{n}| < |\frac{m}{n}|\frac{1}{2^p}
\end{align*}
where $m,n\in \nonzerointeger, p\in \nat$.  
\end{definition}
The representation $(m,n,p)$ for $x$ means that $\frac{m}{n}$ approximates $x$ and the relative error of this approximation does not exceed $\frac{1}{2^p}$.

In this article, we focus on calculating expressions that can be described with the following grammar:
\begin{equation}
\begin{aligned}
E~::=&
	~c~|
 	~-E~|
 	~E\cdot E~|
 	~\frac{1}{E}~|
 	~E+E~|
 	~\sqrt{E}~|
 	~e^E~|\\
 &	~\ln(E)~|
 	~\arctan(E)~|
 	~\cos(E)~|
 	~\sin(E)\label{eqn:grammar}
\end{aligned}
\end{equation}
where $c$ represents a rational constant. 

The following identities show that other interesting operations can be described in terms of the operations of this grammar:
\begin{align*}
\tan(x)&=\frac{\sin(x)}{\cos(x)}\\
\cot(x)&=\frac{1}{\tan(x)}\\
\arcsin(x)&=\arctan(\frac{x}{\sqrt{1-x^2}})\\
\arccos(x)&=\arctan(\frac{\sqrt{1-x^2}}{x})\\
\arccot(x)&=\arccos(\frac{x}{\sqrt{1+x^2}})
\end{align*}

\section{Sensitivity of Operations to Perturbations in the Arguments}\label{sec:condition}
Our goal is to develop a top-down exact real arithmetic approach based on the representation of Definition~\ref{def:realrep}. Thus, for a given computational problem, it is essential to estimate the required precision of the inputs based on the desired precision in the output. Moreover, we would like to investigate to what extend the operations of  grammar~\eqref{eqn:grammar} can be calculated in a top-down manner without iterations. 

To analyze the operations of grammar~\eqref{eqn:grammar}, we introduce a pertubation analysis method for measuring the sensitivity of the operations to perturbations in their arguments. We consider two general cases in our analysis. First, we consider a function $f(x)$ in one variable and show how errors in the input influence the output (Section~\ref{subsec:pert_unary}). Then, we consider a function $f(x,y)$ with two arguments and investigate the effect of errors in the inputs on the output (Section~\ref{subsec:pert_bin}).

\subsection{Perturbation Analysis for Unary Functions}\label{subsec:pert_unary}
Let $f(x)$ be a differentiable function that we want to calculate in point $x=a$. Suppose that $\dx$ is a perturbation in the argument $a$. The relative error in the calculation of $f(a)$ caused by this perturbation is:
\begin{align*}
|\frac{f(a+\dx)-f(a)}{f(a)}|
\end{align*}
We want to relate this relative error to the relative error of the argument, namely $|\frac{\dx}{a}|$. To this end, we use the following approximation of the function $f$ in point $x=a+\dx$:
\begin{align*}
f(a+\dx)\approx f(a)+f'(a)\dx
\end{align*}
We can approximate the relative error of $f$ as follows:
\begin{align}
|\frac{f(a+\dx)-f(a)}{f(a)}|\approx |\frac{f'(a)\dx}{f(a)}|=|\frac{af'(a)}{f(a)}||\frac{\dx}{a}|\label{eqn:condition_single}
\end{align}
From equality~\eqref{eqn:condition_single} one can see that the quantity $|\frac{af'(a)}{f(a)}|$ determines the effect of the relative error $|\frac{\dx}{a}|$ on the output. In numerical analysis and linear algebra the quantity $|\frac{xf'(x)}{f(x)}|$ is usually referred to as the \textit{condition number} of $f(x)$ \cite{H02,H08}.

\subsection{Perturbation Analysis for Binary Functions}\label{subsec:pert_bin}
Let $f(x,y)$ be a differentiable function that we want to calculate in point $(x,y)=(a,b)$. Suppose $\dx$ and $\dy$ are perturbations in the arguments $a$ and $b$, respectively. The relative error of $f(a,b)$ caused by these perturbations can be calculated as follows:
\begin{align}
|\frac{f(a+\dx,b+\dy)-f(a,b)}{f(a,b)}|\label{eqn:error_multiple}
\end{align}
To find an upper-bound for \eqref{eqn:error_multiple}, we use the following first-order approximation of the function $f$ in point $(x,y)=(a+\dx, b+\dy)$:
\begin{align*}
f(a+\dx,b+\dy)\approx f(a,b)+f_x(a,b)\dx+f_y(a,b)\dy
\end{align*}
where $f_x(a,b)$ and $f_y(a,b)$ are the partial derivatives of $f$ with respect to $x$ and $y$ in $(a,b)$, respectively. We relate the relative error in the calculation of $f$ to the relative errors $|\frac{\dx}{a}|$ and $|\frac{\dy}{b}|$ as follows:
\begin{align}
|\frac{f(a+\dx,b+\dy)-f(a,b)}{f(a,b)}|&\approx |\frac{f_x(a,b)\dx +f_y(a,b)\dy}{f(a,b)}|\leq \nonumber\\
& |\frac{af_x(a,b)}{f(a,b)}||\frac{\dx}{a}|+|\frac{bf_y(a,b)}{f(a,b)}||\frac{\dy}{b}|\leq \nonumber\\
& (|\frac{af_x(a,b)}{f(a,b)}|+|\frac{bf_y(a,b)}{f(a,b)}|)\max\{|\frac{\dx}{a}|,|\frac{\dy}{b}|\}\label{eqn:condition_multiple}
\end{align}   
The quantity $|\frac{af_x(a,b)}{f(a,b)}|+|\frac{bf_y(a,b)}{f(a,b)}|$ determines the upper-bound calculated in inequality~\eqref{eqn:condition_multiple} and we use this quantity to measure the effect of erroneous arguments on the output. It should be noted that in inequality~\eqref{eqn:condition_multiple} we have considered $|\frac{\dx}{a}|$ and $|\frac{\dy}{b}|$ as independent factors that can influence the relative error of $f$. This way of reasoning about the sensitivity of $f(x,y)$ is related to \textit{componentwise} analysis of perturbation in numerical analysis and linear algebra \cite{HD12}, which we use in this article. 

Another possibility is to relate the relative error of \eqref{eqn:error_multiple} to the quantity:
\begin{align*}
\frac{
	\begin{Vmatrix}
 		\begin{bmatrix}
			\dx \\ \dy
		\end{bmatrix}
	\end{Vmatrix}  }
   {\begin{Vmatrix}   
		\begin{bmatrix}
			a \\ b
		\end{bmatrix} 
	\end{Vmatrix}}
\end{align*}
This type of analysis is usually referred to as \textit{normwise} analysis of perturbation~\cite{HD12}. 

In the following sections, we provide a top-down approach for approximating various algebraic and transcendental functions. Perturbation analysis will be used to show that  in our approximations we only recompute expressions when essential. 

\section{Approximating Algebraic Operations}
\label{sec:algebraic_operations}
In this section we calculate the algebraic operations of grammar~\eqref{eqn:grammar} using a top-down approach. We formulate and prove theorems that allow us to calculate expressions involving unary negation (Section~\ref{subsec:neg}), multiplication (Section~\ref{subsec:mult}), inverse (Section~\ref{subsec:inv}), addition (Section~\ref{subsec:add}), and square root (Section~\ref{subsec:sqrt}). Based on the theorems, we provide different implementations of \textproc{Compute}$(expr,p)$ to calculate algebraic operations. These implementations receive an algebraic expression $expr$ and a desired precision $p$ and produce an output with the desired precision. In each case, we also apply the perturbation analysis of Section~\ref{sec:condition} and show that we avoid unnecessary iterations in our approximations.
 
\subsection{Unary Negation}
\label{subsec:neg} 
\begin{mytheorem}\label{thm:unary_neg}
Let $x$ be a real number represented by $(m,n,p)$. Then $-x$ can be represented by $(-m,n,p)$.
\end{mytheorem}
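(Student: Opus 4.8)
The plan is to verify directly that the triple $(-m,n,p)$ satisfies the defining inequality of Definition~\ref{def:realrep} for the number $-x$. First I would record what the hypothesis supplies: since $x$ is represented by $(m,n,p)$, we have $|x-\frac{m}{n}| < |\frac{m}{n}|\frac{1}{2^p}$, with $m,n\in\nonzerointeger$ and $p\in\nat$. The goal is then to establish the analogous bound $|-x-\frac{-m}{n}| < |\frac{-m}{n}|\frac{1}{2^p}$, which is precisely the statement that $-x$ is represented by $(-m,n,p)$.

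Next I would reduce the target inequality to the hypothesis by exploiting the sign-invariance of the absolute value on both sides. On the left, pulling out a factor of $-1$ inside the absolute value gives $|-x-\frac{-m}{n}| = |-(x-\frac{m}{n})| = |x-\frac{m}{n}|$, so the quantity to be bounded is literally the one appearing in the hypothesis. On the right, $|\frac{-m}{n}|\frac{1}{2^p} = |\frac{m}{n}|\frac{1}{2^p}$, since $|\cdot|$ does not see the sign of the numerator. Thus the required inequality coincides exactly with the assumed one and holds without further work.

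Finally I would confirm the side conditions on the representation: $-m\in\nonzerointeger$ because $m\neq 0$ forces $-m\neq 0$, while $n$ and $p$ are untouched and so remain in $\nonzerointeger$ and $\nat$, respectively. I expect no genuine obstacle here; the statement is essentially immediate. The only point demanding a little care is to apply the sign-invariance of $|\cdot|$ to both sides of the inequality simultaneously, and to check explicitly that negating the numerator preserves the domain constraint $-m\in\nonzerointeger$.
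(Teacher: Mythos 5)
Your proof is correct and takes essentially the same route as the paper: both arguments simply verify the defining inequality of Definition~\ref{def:realrep} for $-x$ with the triple $(-m,n,p)$, the paper by negating the two-sided inequality for $x$ and you by using the invariance of $|\cdot|$ under negation, which are trivially equivalent manipulations. Your explicit check that $-m\in\nonzerointeger$ is a small point of care the paper omits, but it does not change the substance of the argument.
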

\begin{proof}
Since $x$ is represented by $(m,n,p)$ we can write: 
\begin{align*}
\frac{m}{n}-|\frac{m}{n}|\frac{1}{2^p}&<x<\frac{m}{n}+|\frac{m}{n}|\frac{1}{2^p}\\
-\frac{m}{n}-|\frac{m}{n}|\frac{1}{2^p}&<-x<-\frac{m}{n}+|\frac{m}{n}|\frac{1}{2^p}
\end{align*}
Thus, we can represent $-x$ by $(-m,n,p)$.

\end{proof}

Algorithm~\ref{alg:negation} applies Theorem~\ref{thm:unary_neg} and approximates $-x$ based on a representation $(m,n,p)$ of $x$. To confirm that $-x$ can be approximated with arbitrary precision in one pass, we calculate $|\frac{xf'(x)}{f(x)}|$ for the function $f(x)=-x$:
\begin{align*}
|\frac{xf'(x)}{f(x)}|=|\frac{(x)(-1)}{-x}|=1
\end{align*} 
The quantity $|\frac{xf'(x)}{f(x)}|$ is small and independent of the argument $x$ and hence the amount of precision that we lose in unary negation (which is $0$ in the case of Theorem~\ref{thm:unary_neg}) can be calculated independently of $x$.

\begin{algorithm}
\caption{Unary Negation}\label{alg:negation}
\begin{algorithmic}[1]
\Require $\expr$ has the shape $-x$
\Procedure{Compute}{$\expr,p$}

\State $\frac{m}{n} \gets \Call{Compute}{x,p}$
\State \Return $\frac{-m}{n}$
\EndProcedure
\end{algorithmic}
\end{algorithm} 

\subsection{Multiplication $x\cdot y$}
\label{subsec:mult}

\begin{mytheorem}\label{thm:mult}
Let $x$ and $y$ be two real numbers represented by $(m,n,p)$ and $(m',n',p)$, respectively. Then $x\cdot y$ can be represented by $(mm',nn',p-2)$. 
\end{mytheorem}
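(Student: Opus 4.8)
The plan is to reduce everything to the relative-error form of Definition~\ref{def:realrep} and then track how that relative error propagates through a product. First I would unpack the two hypotheses: writing $\hat x = \frac{m}{n}$ and $\hat y = \frac{m'}{n'}$, the assumptions read $|x-\hat x| < |\hat x|\frac{1}{2^p}$ and $|y-\hat y| < |\hat y|\frac{1}{2^p}$. Since $m,n,m',n'$ are nonzero, $\hat x,\hat y \neq 0$, so I can introduce $\delta_1 = \frac{x-\hat x}{\hat x}$ and $\delta_2 = \frac{y-\hat y}{\hat y}$, giving $x = \hat x(1+\delta_1)$, $y = \hat y(1+\delta_2)$ with $|\delta_1|,|\delta_2| < \frac{1}{2^p}$. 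This substitution is the key move: it converts the additive bounds into a multiplicative form that interacts cleanly with a product.

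Next I would expand the product in these terms, $xy = \hat x\hat y(1+\delta_1)(1+\delta_2) = \hat x\hat y\,(1 + \delta_1 + \delta_2 + \delta_1\delta_2)$, so that $\frac{mm'}{nn'} = \hat x\hat y$ approximates $xy$ with relative error exactly $|\delta_1 + \delta_2 + \delta_1\delta_2|$. Before bounding this, note that $|\frac{mm'}{nn'}| = |\hat x|\,|\hat y|$ and that $mm',nn' \in \nonzerointeger$, so $(mm',nn',p-2)$ is a syntactically admissible representation; it remains only to verify the error inequality required by Definition~\ref{def:realrep}.

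The final step is the arithmetic: $|\delta_1 + \delta_2 + \delta_1\delta_2| \le |\delta_1| + |\delta_2| + |\delta_1|\,|\delta_2| < \frac{2}{2^p} + \frac{1}{2^{2p}}$, and I would show this is at most $\frac{1}{2^{p-2}} = \frac{4}{2^p}$, which reduces to checking $\frac{1}{2^{2p}} < \frac{2}{2^p}$, i.e. $\frac{1}{2^p} < 2$, true for every $p\in\nat$. The only point demanding care — and the main, if modest, obstacle — is the cross term $\delta_1\delta_2$ together with the reason two bits are dropped rather than one. Observe that the two linear terms alone can approach $\frac{2}{2^p} = \frac{1}{2^{p-1}}$, so a single-bit loss $p-1$ would already be too tight to accommodate even the strict-inequality slack, let alone the positive quadratic term; the cross term is what pushes the necessary loss past one bit. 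Dropping two bits absorbs it comfortably, so the choice $p-2$ is safe.
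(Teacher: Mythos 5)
Your proof is correct, and it takes a genuinely different route from the paper's. The paper argues by case analysis on the signs of $\frac{m}{n}$ and $\frac{m'}{n'}$ (three cases), multiplies the interval bounds for $x$ and $y$ endpoint by endpoint, and then invokes a separate proposition (that $(1+\frac{1}{2^p})^2\leq 1+\frac{1}{2^{p-2}}$ and $1-\frac{1}{2^{p-2}}\leq(1-\frac{1}{2^p})^2$) to check that the resulting endpoints fit inside the $p-2$ tolerance. You instead pass to the multiplicative perturbation form $x=\frac{m}{n}(1+\delta_1)$, $y=\frac{m'}{n'}(1+\delta_2)$ with $|\delta_i|<\frac{1}{2^p}$, so that the relative error of the product is exactly $|\delta_1+\delta_2+\delta_1\delta_2|$, and the whole theorem reduces to the single inequality $\frac{2}{2^p}+\frac{1}{2^{2p}}\leq\frac{4}{2^p}$. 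What each approach buys: yours is shorter, sign-agnostic (the case analysis disappears because $|\frac{mm'}{nn'}|=|\frac{m}{n}||\frac{m'}{n'}|$ regardless of signs), and is closer in spirit to the paper's own perturbation analysis of Section~\ref{sec:condition}; the paper's interval-endpoint style is more verbose but matches the uniform template it reuses for the inverse and addition theorems, and it sidesteps any need to introduce the $\delta_i$ notation. The underlying arithmetic is the same in both — expanding $(1\pm\frac{1}{2^p})^2$ is precisely the paper's Proposition on which its cases rest. Your closing observation is also a correct (and worthwhile) addition: since $\delta_1,\delta_2$ can both approach $\frac{1}{2^p}$ with the same sign, the error genuinely exceeds $\frac{1}{2^{p-1}}$ for some inputs, so the drop to $p-2$ rather than $p-1$ is not an artifact of the estimate.
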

\begin{proof}
From Definition~\ref{def:realrep} we can write:
\begin{align}
\frac{m}{n}-|\frac{m}{n}|\frac{1}{2^{p}}<x<\frac{m}{n}+|\frac{m}{n}|\frac{1}{2^{p}} \label{eqn:mult_x}\\
\frac{m'}{n'}-|\frac{m'}{n'}|\frac{1}{2^{p}}<y<\frac{m'}{n'}+|\frac{m'}{n'}|\frac{1}{2^{p}} \label{eqn:mult_y}
\end{align}
We consider three cases:
\begin{enumerate}
\item
Suppose $\frac{mm'}{nn'}>0$. We can multiply inequalities~\eqref{eqn:mult_x} and \eqref{eqn:mult_y} as follows:
\begin{align*}
\frac{mm'}{nn'}(1-\frac{1}{2^{p}})^2<x\cdot y <\frac{mm'}{nn'}(1+\frac{1}{2^{p}})^2\
\end{align*}
If $x\cdot y$ can be represented by $(mm',nn',p-2)$ then it must be the case that:
\begin{align*}
\frac{mm'}{nn'}(1-\frac{1}{2^{p-2}})<x\cdot y<\frac{mm'}{nn'}(1+\frac{1}{2^{p-2}})
\end{align*}
To see that this is valid, we need to show that:
\begin{align*}
\frac{mm'}{nn'}(1+\frac{1}{2^{p}})^2\leq \frac{mm'}{nn'}(1+\frac{1}{2^{p-2}})\\
\frac{mm'}{nn'}(1-\frac{1}{2^{p-2}}) \leq \frac{mm'}{nn'}(1-\frac{1}{2^{p}})^2
\end{align*}
But $\frac{mm'}{nn'}>0$ and from Proposition~\ref{prop:prec2} (see \ref{app:prop_lem}) we know that both inequalities hold.
\item
Suppose $\frac{m}{n}>0, \frac{m'}{n'}<0$. We can rewrite inequalities~\eqref{eqn:mult_x} and \eqref{eqn:mult_y} as follows:
\begin{align}
\frac{m}{n}(1-\frac{1}{2^{p}})<x<\frac{m}{n}(1+\frac{1}{2^{p}})\label{eqn:mult_x_pos} \\
\frac{m'}{n'}(1+\frac{1}{2^{p}})<y<\frac{m'}{n'}(1-\frac{1}{2^{p}}) \label{eqn:mult_y_neg}
\end{align}
Multiplying inequalities~\eqref{eqn:mult_x_pos} and \eqref{eqn:mult_y_neg} we get:
\begin{align*}
\frac{mm'}{nn'}(1+\frac{1}{2^{p}})^2<x\cdot y<\frac{mm'}{nn'}(1-\frac{1}{2^{p}})^2
\end{align*}
If $x\cdot y$ is representable by $(mm',nn',p-2)$ then it must be the case that:
\begin{align*}
\frac{mm'}{nn'}(1+\frac{1}{2^{p-2}})<x\cdot y<\frac{mm'}{nn'}(1-\frac{1}{2^{p-2}})
\end{align*}
To show that this is valid, it suffices to prove that:
\begin{align*}
\frac{mm'}{nn'}(1-\frac{1}{2^{p}})^2 \leq \frac{mm'}{nn'}(1-\frac{1}{2^{p-2}}) \\
\frac{mm'}{nn'}(1+\frac{1}{2^{p-2}})\leq \frac{mm'}{nn'}(1+\frac{1}{2^{p}})^2
\end{align*}
But $\frac{mm'}{nn'}<0$ and from Proposition~\ref{prop:prec2} (see \ref{app:prop_lem}) we know that both inequalities hold. 
\item
Suppose $\frac{m}{n}<0, \frac{m'}{n'}>0$. This case can be proved similarly to the second case.
\end{enumerate}
\end{proof}

\begin{algorithm}
\caption{Multiplication}\label{alg:mult}
\begin{algorithmic}[1]
\Require $\expr$ has the shape $x\cdot y$
\Procedure{Compute}{$\expr,p$}

\State $\frac{m}{n} \gets \Call{Compute}{x,p+2}$
\State $\frac{m'}{n'} \gets \Call{Compute}{y,p+2}$
\State \Return $\frac{mm'}{nn'}$
\EndProcedure
\end{algorithmic}
\end{algorithm}

Algorithm~\ref{alg:mult} depicts an approximation of $x\cdot y$ based on Theorem~\ref{thm:mult}. Given approximations of $x$ and $y$, this algorithm approximates $x\cdot y$ in one pass; the loss of precision is predictable without additional information about $x$ and $y$. To confirm this claim, we apply the perturbation analysis of Section~\ref{sec:condition} on the function $f(x,y)=x\cdot y$:
\begin{align*}
|\frac{xf_x(x,y)}{f(x,y)}|+|\frac{yf_y(x,y)}{f(x,y)}|=|\frac{x.y}{x.y}|+|\frac{y.x}{x.y}|=2
\end{align*}
The sensitivity measure is small and independent of the arguments. Thus, in a top-down approach, $x\cdot y$ can be approximated in one pass without iterative computations of $x$ and $y$.

\subsection{Inverse}\label{subsec:inv}
\begin{mytheorem}\label{thm:inverse}
Let $x$ be a real number represented by $(m,n,p)$. Then $\frac{1}{x}$ can be represented by $(n,m,p-1)$.
\end{mytheorem}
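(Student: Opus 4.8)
The plan is to bound the relative error of the reciprocal approximation directly in terms of the relative error of the approximation of $x$. Writing $q=\frac{m}{n}$, Definition~\ref{def:realrep} gives $|x-q|<|q|\frac{1}{2^p}$, and the goal is to establish $|\frac{1}{x}-\frac{1}{q}|<|\frac{1}{q}|\frac{1}{2^{p-1}}$, because $\frac{n}{m}=\frac{1}{q}$ is exactly the candidate approximation and the tuple $(n,m,p-1)$ encodes precisely this relative bound.

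First I would compute the relative error of the reciprocal explicitly. A short manipulation gives
\[
\left|\frac{\frac{1}{x}-\frac{1}{q}}{\frac{1}{q}}\right|=\left|\frac{q-x}{x}\right|=\frac{|x-q|}{|x|},
\]
so the problem reduces to controlling $\frac{|x-q|}{|x|}$. The numerator is already bounded by $|q|\frac{1}{2^p}$ by hypothesis. For the denominator I would derive a lower bound on $|x|$ from the same hypothesis: since $|x-q|<|q|\frac{1}{2^p}$, the reverse triangle inequality yields $|x|>|q|(1-\frac{1}{2^p})$.

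Combining these two bounds gives
\[
\frac{|x-q|}{|x|}<\frac{|q|\frac{1}{2^p}}{|q|(1-\frac{1}{2^p})}=\frac{1}{2^p-1}.
\]
It then remains to check that $\frac{1}{2^p-1}\leq\frac{1}{2^{p-1}}$, which is equivalent to $2^{p-1}\leq 2^p-1$, i.e. to $2^{p-1}\geq 1$, i.e. to $p\geq 1$. This is exactly the condition needed for $(n,m,p-1)$ to be a legitimate representation at all, since we must have $p-1\in\nat$; thus the hypothesis $p\geq 1$ is implicit in the statement.

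The main obstacle, and really the only delicate point, is the lower bound on $|x|$: because inversion inflates small values, the factor $1-\frac{1}{2^p}$ in the denominator is precisely what degrades the guaranteed precision from $p$ to $p-1$, and one must confirm that this single-bit loss suffices. As an alternative route that mirrors the proof of Theorem~\ref{thm:mult}, I could split on the sign of $q$, rewrite the hypothesis as $q(1-\frac{1}{2^p})<x<q(1+\frac{1}{2^p})$, take reciprocals (reversing the chain, and using $p\geq 1$ to ensure that $x$ retains the sign of $q$), and then reduce to the two scalar inequalities $1-\frac{1}{2^{p-1}}\leq\frac{1}{1+1/2^p}$ and $\frac{1}{1-1/2^p}\leq 1+\frac{1}{2^{p-1}}$, both of which hold for $p\geq 1$. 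This parallels the appendix proposition invoked for multiplication, but the direct relative-error computation above is shorter and avoids the case split.
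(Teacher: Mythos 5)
Your proposal is correct, and your primary argument takes a genuinely different route from the paper's. The paper splits on the sign of $\frac{m}{n}$, inverts the two-sided chain $\frac{m}{n}(1-\frac{1}{2^p})<x<\frac{m}{n}(1+\frac{1}{2^p})$ into $\frac{n}{m}(\frac{2^p}{2^p+1})<\frac{1}{x}<\frac{n}{m}(\frac{2^p}{2^p-1})$ (reversed in the negative case), and reduces both cases to the scalar inequalities of Proposition~\ref{prop:prec1} in the appendix --- which is exactly the ``alternative route'' you sketch at the end, so you have in effect reconstructed the paper's proof as your fallback. Your main argument instead computes the relative error of the reciprocal directly, writing $q=\frac{m}{n}$ and using $|\frac{1}{x}-\frac{1}{q}|/|\frac{1}{q}|=|x-q|/|x|$, bounds the numerator by hypothesis, bounds $|x|$ from below by the reverse triangle inequality, and reduces everything to $2^{p-1}\leq 2^p-1$. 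This is shorter, handles both signs uniformly with no case split, shows that $x\neq 0$ (so $\frac{1}{x}$ is defined) already follows from the hypothesis when $p\geq 1$, and isolates exactly where the one-bit loss comes from: the factor $1-\frac{1}{2^p}$ in the lower bound for $|x|$. What the paper's formulation buys is uniformity of style: it stays in the two-sided-bounds format of Definition~\ref{def:realrep} used in the multiplication and addition proofs and reuses a shared appendix proposition. Both arguments require $p\geq 1$, which you rightly flag as implicit in the statement (otherwise $p-1\notin\nat$); the paper leaves this implicit as well, through the hypothesis $p\in\nat^+$ of Proposition~\ref{prop:prec1}.
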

\begin{proof}
We consider two cases:
\begin{enumerate}
\item
Suppose $\frac{m}{n}>0$. Since $x$ is represented by $(m,n,p)$ we can write:
\begin{align*}
\frac{m}{n}(1-\frac{1}{2^{p}})<x<\frac{m}{n}(1+\frac{1}{2^{p}})\\
\frac{n}{m}(\frac{2^{p}}{2^{p}+1})<\frac{1}{x}<\frac{n}{m}(\frac{2^{p}}{2^{p}-1})
\end{align*}
If $\frac{1}{x}$ is representable by $(n,m,p-1)$, then it must be the case that:
\begin{align*}
\frac{n}{m}(1-\frac{1}{2^{p-1}})<\frac{1}{x}<\frac{n}{m}(1+\frac{1}{2^{p-1}})
\end{align*}
To see that this is valid, it suffices to show:
\begin{align*}
\frac{n}{m}(\frac{2^{p}}{2^{p}-1})\leq \frac{n}{m}(1+\frac{1}{2^{p-1}})\\
\frac{n}{m}(1-\frac{1}{2^{p-1}})\leq \frac{n}{m}(\frac{2^{p}}{2^{p}+1})
\end{align*}
But $\frac{n}{m}>0$ and hence both inequalities follow from Proposition~\ref{prop:prec1} (see \ref{app:prop_lem}). 
\item
Suppose $\frac{m}{n}<0$. Since $x$ is representable by $(m,n,p)$ we have:
\begin{align*}
\frac{m}{n}(1+\frac{1}{2^{p}})<x<\frac{m}{n}(1-\frac{1}{2^{p}})\\
\frac{n}{m}(\frac{2^{p}}{2^{p}-1})<\frac{1}{x}<\frac{n}{m}(\frac{2^{p}}{2^{p}+1})
\end{align*}
If $\frac{1}{x}$ is representable by $(n,m,p-1)$ then it must be the case that:
\begin{align*}
\frac{n}{m}(1+\frac{1}{2^{p-1}})<\frac{1}{x}<\frac{n}{m}(1-\frac{1}{2^{p-1}})
\end{align*}
To see that this is valid, we need to show:
\begin{align*}
\frac{n}{m}(\frac{2^{p}}{2^{p}+1})\leq \frac{n}{m}(1-\frac{1}{2^{p-1}}) \\
\frac{n}{m}(1+\frac{1}{2^{p-1}})\leq \frac{n}{m}(\frac{2^{p}}{2^{p}-1})
\end{align*}
But $\frac{n}{m}<0$ and hence the inequalities follow from Proposition~\ref{prop:prec1} (see \ref{app:prop_lem}). 
\end{enumerate}
\end{proof}

\begin{algorithm}
\caption{Inverse}\label{alg:inverse}
\begin{algorithmic}[1]
\Require $expr$ has the shape $\frac{1}{x}$
\Procedure{Compute}{$expr,p$}

\State $\frac{m}{n} \gets \Call{Compute}{x,p+1}$
\State \Return $\frac{n}{m}$
\EndProcedure
\end{algorithmic}
\end{algorithm}

Algorithm~\ref{alg:inverse} approximates $\frac{1}{x}$ with precision $p$ based on Theorem~\ref{thm:inverse}. Given an approximation of $x$ with precision $p$, the algorithm allows us to approximate $\frac{1}{x}$ in one pass. We use perturbation analysis and calculate the quantity $|\frac{xf'(x)}{f(x)}|$ for $f(x)=\frac{1}{x}$ to show that loss of precision in the inverse can be estimated independently of the argument: 
\begin{align*}
|\frac{xf'(x)}{f(x)}|=|\frac{(x)(\frac{-1}{x^2})}{(\frac{1}{x})}|=1
\end{align*}
The quantity $|\frac{xf'(x)}{f(x)}|$ is a constant and hence iterative computations can be avoided when calculating the inverse.

\subsection{Addition}\label{subsec:add}   
\begin{mytheorem}\label{thm:add}
Let $x$ and $y$ be two real numbers represented by $(m,n,p)$ and $(m',n',p)$, respectively. The value of $x+y$ can be approximated as follows:
\begin{enumerate}[i.]
\item \label{thm:add_same_sign}
If $\frac{mm'}{nn'}>0$, then $x+y$ can be represented by $(mn'+m'n,nn',p)$.
\item \label{thm:add_diff_sign}
If $\frac{mm'}{nn'}<0$ and $i\in \nat^+$ is the smallest natural number such that $i\geq \log_2(\frac{1+\frac{\min(|\frac{m}{n}|,|\frac{m'}{n'}|)}{\max(|\frac{m}{n}|,|\frac{m'}{n'}|)}}{1-\frac{\min(|\frac{m}{n}|,|\frac{m'}{n'}|)}{\max(|\frac{m}{n}|,|\frac{m'}{n'}|)}})$, then $x+y$ can be represented by $(mn'+m'n,nn',p-i)$.
\end{enumerate}
\end{mytheorem}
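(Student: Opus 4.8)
The plan is to follow the template of the earlier algebraic theorems: start from the interval characterisation of the representations of $x$ and $y$ given by Definition~\ref{def:realrep}, add these intervals, and compare the resulting bound on the error of the approximation $\frac{m}{n}+\frac{m'}{n'}=\frac{mn'+m'n}{nn'}$ against the bound demanded by the target precision. The two parts of the statement reflect the qualitatively different behaviour of addition depending on whether the summands reinforce or cancel one another, so I would treat them separately.

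For part~(\ref{thm:add_same_sign}) I would first reduce to the case $\frac{m}{n}>0$ and $\frac{m'}{n'}>0$, the case where both fractions are negative being symmetric after factoring out the common sign. Writing $\frac{m}{n}(1-\frac{1}{2^p})<x<\frac{m}{n}(1+\frac{1}{2^p})$ and the analogous chain for $y$, and adding the two chains, yields $(\frac{m}{n}+\frac{m'}{n'})(1-\frac{1}{2^p})<x+y<(\frac{m}{n}+\frac{m'}{n'})(1+\frac{1}{2^p})$. Because both fractions have the same sign, $\frac{mn'+m'n}{nn'}$ is nonzero and this chain is already in the form required by Definition~\ref{def:realrep} with relative error $\frac{1}{2^p}$; hence $x+y$ is represented by $(mn'+m'n,nn',p)$. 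I expect this part to be routine.

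The substance of the theorem is part~(\ref{thm:add_diff_sign}), the opposite-sign case, where catastrophic cancellation appears. I would take $\frac{m}{n}>0>\frac{m'}{n'}$ (the other orientation is symmetric) and add the appropriately oriented interval bounds. The crucial difference from part~(i) is that the absolute error accumulates to at most $(|\frac{m}{n}|+|\frac{m'}{n'}|)\frac{1}{2^p}$ while the magnitude of the approximation drops to $|\frac{mn'+m'n}{nn'}|=|\,|\frac{m}{n}|-|\frac{m'}{n'}|\,|$. To certify the target representation I need the absolute error to be at most $|\frac{mn'+m'n}{nn'}|\cdot\frac{1}{2^{p-i}}$, which after cancelling $\frac{1}{2^p}$ reduces to the requirement $2^i\geq\frac{|\frac{m}{n}|+|\frac{m'}{n'}|}{|\,|\frac{m}{n}|-|\frac{m'}{n'}|\,|}$. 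Dividing numerator and denominator by $\max(|\frac{m}{n}|,|\frac{m'}{n'}|)$ rewrites the right-hand side as $\frac{1+r}{1-r}$ with $r=\frac{\min(|\frac{m}{n}|,|\frac{m'}{n'}|)}{\max(|\frac{m}{n}|,|\frac{m'}{n'}|)}$, so taking the base-$2$ logarithm gives exactly the threshold $i\geq\log_2(\frac{1+r}{1-r})$ stated in the theorem, and the smallest natural number $i$ meeting it is the one claimed.

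The only genuinely delicate point — and thus the main obstacle — is the algebra of the opposite-sign case: correctly tracking that the two input errors add in absolute terms while the approximation itself shrinks, and then rewriting $\frac{|\frac{m}{n}|+|\frac{m'}{n'}|}{|\,|\frac{m}{n}|-|\frac{m'}{n'}|\,|}$ into the symmetric $\min/\max$ form so that the bound matches the statement regardless of which summand dominates. I would also record that the derivation implicitly assumes $\frac{mn'+m'n}{nn'}\neq0$, equivalently $r\neq1$, so that $\log_2(\frac{1+r}{1-r})$ is defined and the resulting tuple is a legitimate representation in the sense of Definition~\ref{def:realrep}; this is precisely the regime in which the sensitivity measure of addition grows without bound, which foreshadows why iterations become unavoidable for opposite-sign sums.
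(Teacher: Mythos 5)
Your proposal is correct and follows essentially the same route as the paper's proof: the same interval addition, the same reduction of both parts to the key inequality $\frac{1}{2^i}(|\frac{m}{n}|+|\frac{m'}{n'}|)\leq |\frac{m}{n}+\frac{m'}{n'}|$, and the same rewriting into the $\min/\max$ form (your $2^i\geq\frac{1+r}{1-r}$ is exactly the paper's $\frac{\max}{\min}\geq\frac{2^i+1}{2^i-1}$ after cross-multiplication, with your $|\,|\frac{m}{n}|-|\frac{m'}{n'}|\,|$ replacing the paper's explicit disjunction over which summand dominates). The only addition beyond the paper is your explicit remark that $r\neq 1$ is needed for $\log_2(\frac{1+r}{1-r})$ to be defined, which is a legitimate point the paper leaves implicit.
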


\begin{proof}
\textit{\\}
\begin{enumerate}[i.]
\item
For numbers $x$ and $y$ we can write:
\begin{align}
\frac{m}{n}-|\frac{m}{n}|\frac{1}{2^{p}}&<x<\frac{m}{n}+|\frac{m}{n}|\frac{1}{2^{p}} \label{eqn:add_x}\\
\frac{m'}{n'}-|\frac{m'}{n'}|\frac{1}{2^{p}}&<y<\frac{m'}{n'}+|\frac{m'}{n'}|\frac{1}{2^{p}} \label{eqn:add_y}
\end{align}
From inequalities~\eqref{eqn:add_x} and \eqref{eqn:add_y} we can write:
\begin{align}
(\frac{m}{n}+\frac{m'}{n'})-\frac{1}{2^{p}}(|\frac{m}{n}|+|\frac{m'}{n'}|)<x+y<(\frac{m}{n}+\frac{m'}{n'})+\frac{1}{2^{p}}(|\frac{m}{n}|+|\frac{m'}{n'}|)\label{eqn:add}
\end{align}
If $x+y$ is representable by $(mn'+m'n,nn',p)$, then it must be the case that:
\begin{align*}
(\frac{m}{n}+\frac{m'}{n'})-\frac{1}{2^{p}}|\frac{m}{n}+\frac{m'}{n'}|<x+y<(\frac{m}{n}+\frac{m'}{n'})+\frac{1}{2^{p}}|\frac{m}{n}+\frac{m'}{n'}|
\end{align*}
To show that this is valid, we need to prove that:
\begin{align}
(\frac{m}{n}+\frac{m'}{n'})+\frac{1}{2^{p}}(|\frac{m}{n}|+|\frac{m'}{n'}|)\leq (\frac{m}{n}+\frac{m'}{n'})+\frac{1}{2^{p}}|\frac{m}{n}+\frac{m'}{n'}| \label{eqn:add_samesign_pos} \\
(\frac{m}{n}+\frac{m'}{n'})-\frac{1}{2^{p}}|\frac{m}{n}+\frac{m'}{n'}| \leq (\frac{m}{n}+\frac{m'}{n'})-\frac{1}{2^{p}}(|\frac{m}{n}|+|\frac{m'}{n'}|) \label{eqn:add_samesign_neg}
\end{align} 
The rational numbers $\frac{m}{n}$ and $\frac{m'}{n'}$ have the same sign. Therefore, $|\frac{m}{n}+\frac{m'}{n'}|=|\frac{m}{n}|+|\frac{m'}{n'}|$ holds and inequalities~\eqref{eqn:add_samesign_pos} and \eqref{eqn:add_samesign_neg} are valid. 
\item
If $x+y$ is representable by $(mn'+m'n,nn',p-i)$, then it must be the case that:
\begin{align*}
(\frac{m}{n}+\frac{m'}{n'})-\frac{1}{2^{p-i}}|\frac{m}{n}+\frac{m'}{n'}|<x+y<(\frac{m}{n}+\frac{m'}{n'})+\frac{1}{2^{p-i}}|\frac{m}{n}+\frac{m'}{n'}|
\end{align*}
To show that this holds, we need to prove the following (see inequality~\eqref{eqn:add}):
\begin{align*}
(\frac{m}{n}+\frac{m'}{n'})+\frac{1}{2^{p}}(|\frac{m}{n}|+|\frac{m'}{n'}|)\leq (\frac{m}{n}+\frac{m'}{n'})+\frac{1}{2^{p-i}}|\frac{m}{n}+\frac{m'}{n'}|\\
(\frac{m}{n}+\frac{m'}{n'})-\frac{1}{2^{p-i}}|\frac{m}{n}+\frac{m'}{n'}|\leq (\frac{m}{n}+\frac{m'}{n'})-\frac{1}{2^{p}}(|\frac{m}{n}|+|\frac{m'}{n'}|)
\end{align*}
For both inequalities, it boils down to proving the following:
\begin{align}
\frac{1}{2^i}(|\frac{m}{n}|+|\frac{m'}{n'}|)\leq |\frac{m}{n}+\frac{m'}{n'}|\label{eqn:add_diff_sgn}
\end{align}

To prove inequality~\eqref{eqn:add_diff_sgn}, we consider the following two cases:
\begin{enumerate}[1.]
\item
Suppose $\frac{m}{n}>0$ and $\frac{m'}{n'}<0$. We can rewrite inequality~\eqref{eqn:add_diff_sgn} as follows:
\begin{align*}
|\frac{m}{n}+\frac{m'}{n'}| \geq \frac{1}{2^i}(\frac{m}{n}-\frac{m'}{n'}) \Leftrightarrow & (\frac{m}{n}+\frac{m'}{n'})\geq \frac{1}{2^i}(\frac{m}{n}-\frac{m'}{n'}) \vee \\
&(\frac{m}{n}+\frac{m'}{n'})\leq \frac{1}{2^i}(\frac{m'}{n'}-\frac{m}{n}) 
\end{align*} 
In other words, we should show $\frac{\frac{m}{n}}{-\frac{m'}{n'}}\geq \frac{2^i+1}{2^i-1}$ or $\frac{-\frac{m'}{n'}}{\frac{m}{n}}\geq \frac{2^i+1}{2^i-1}$. Depending on the values of $|\frac{m}{n}|$ and $|\frac{m'}{n'}|$, both cases follow from $\frac{\max(|\frac{m}{n}|,|\frac{m'}{n'}|)}{\min(|\frac{m}{n}|,|\frac{m'}{n'}|)}\geq \frac{2^i+1}{2^i-1}$. Equivalently, we should have $i\geq \log_2(\frac{1+\frac{\min(|\frac{m}{n}|,|\frac{m'}{n'}|)}{\max(|\frac{m}{n}|,|\frac{m'}{n'}|)}}{1-\frac{\min(|\frac{m}{n}|,|\frac{m'}{n'}|)}{\max(|\frac{m}{n}|,|\frac{m'}{n'}|)}})$.
\item
Suppose $\frac{m}{n}<0$ and $\frac{m'}{n'}>0$. We can rewrite inequality~\eqref{eqn:add_diff_sgn} as follows:
\begin{align*}
|\frac{m}{n}+\frac{m'}{n'}|\geq \frac{1}{2^i}(\frac{m'}{n'}-\frac{m}{n}) \Leftrightarrow & (\frac{m}{n}+\frac{m'}{n'})\geq \frac{1}{2^i}(\frac{m'}{n'}-\frac{m}{n}) \vee \\
& (\frac{m}{n}+\frac{m'}{n'}) \leq \frac{1}{2^i}(\frac{m}{n}-\frac{m'}{n'})
\end{align*}
In other word, we need to prove $\frac{\frac{m'}{n'}}{-\frac{m}{n}}\geq \frac{2^i+1}{2^i-1}$ or $\frac{-\frac{m}{n}}{\frac{m'}{n'}}\geq \frac{2^i+1}{2^i-1}$. Depending on the values of $|\frac{m}{n}|$ and $|\frac{m'}{n'}|$, both cases follow from $\frac{\max(|\frac{m}{n}|,|\frac{m'}{n'}|)}{\min(|\frac{m}{n}|,|\frac{m'}{n'}|)}\geq \frac{2^i+1}{2^i-1}$. Equivalently, we should have $i\geq \log_2(\frac{1+\frac{\min(|\frac{m}{n}|,|\frac{m'}{n'}|)}{\max(|\frac{m}{n}|,|\frac{m'}{n'}|)}}{1-\frac{\min(|\frac{m}{n}|,|\frac{m'}{n'}|)}{\max(|\frac{m}{n}|,|\frac{m'}{n'}|)}})$.
\end{enumerate}
\end{enumerate}

\end{proof}

\begin{algorithm}
\caption{Addition}\label{alg:addition}
\begin{algorithmic}[1]
\Require $\expr$ has the shape $x+y$
\Procedure{Compute}{$\expr,p$}

\State $dp \gets p$
\Repeat \label{algline:add_go}
\State $\frac{m}{n} \gets \Call{Compute}{x,dp}$ \label{algline:addition}
\State $\frac{m'}{n'} \gets \Call{Compute}{y,dp}$
\If{$\frac{mm'}{nn'}>0$} \Comment{Theorem~\ref{thm:add}.\ref{thm:add_same_sign}}
	\State \Return $\frac{mn'+m'n}{nn'}$
\Else \Comment{Theorem~\ref{thm:add}.\ref{thm:add_diff_sign}}
	\State $i\gets \lceil \log_2(\frac{1+\frac{\min(|\frac{m}{n}|,|\frac{m'}{n'}|)}{\max(|\frac{m}{n}|,|\frac{m'}{n'}|)}}{1-\frac{\min(|\frac{m}{n}|,|\frac{m'}{n'}|)}{\max(|\frac{m}{n}|,|\frac{m'}{n'}|)}}) \rceil$
	\If{$dp-i\geq p$}
		\State \Return $\frac{mn'+m'n}{nn'}$
	\Else
		\State $dp\gets dp+1$ \label{algline:add_inc}
	\EndIf
\EndIf
\Until{$\true$} 
\EndProcedure
\end{algorithmic}
\end{algorithm}

Algorithm~\ref{alg:addition} applies Theorem~\ref{thm:add} to approximate $x+y$ with precision $p$. If approximations $\frac{m}{n}$ and $\frac{m'}{n'}$ have the same sign, we do not lose precision by calculating $x+y$. On the other hand, if $\frac{m}{n}$ and $\frac{m'}{n'}$ have different signs, the amount of precision that is lost depends on the magnitude of $\frac{\min (|\frac{m}{n}|,|\frac{m'}{n'}|)}{\max(|\frac{m}{n}|,|\frac{m'}{n'}|)}$. This indicates that if $\frac{mm'}{nn'}<0$ and $|\frac{m}{n}|\approx |\frac{m'}{n'}|$, a significant amount of precision can be lost in $x+y$. Thus, if the guaranteed precision for $x+y$ (i.e., $p-i$) is not sufficient, $x$ and $y$ must be recomputed with higher precisions (see Line~\ref{algline:add_go},\ref{algline:add_inc} in Algorithm~\ref{alg:addition}).  

To confirm this observation, we apply the perturbation analysis of Section~\ref{sec:condition} on $f(x,y)=x+y$:
\begin{align*}
|\frac{xf_x(x,y)}{f(x,y)}|+|\frac{yf_y(x,y)}{f(x,y)}|=|\frac{x}{x+y}|+|\frac{y}{x+y}|
\end{align*}

The quantity $|\frac{x}{x+y}|+|\frac{y}{x+y}|$ is $1$ when $xy>0$. Thus, we can estimate the loss of precision in $x+y$ independently of the arguments when $xy>0$. 

However, the quantity $|\frac{x}{x+y}|+|\frac{y}{x+y}|$ can be arbitrarily large when $xy<0$ and $|x+y|\approx 0$. This confirms that a significant amount of precision can be lost in $x+y$. 

Perturbation analysis shows that in general, we cannot estimate the amount of precision that is lost in $x+y$ independently of the arguments. Hence, if approximation $\frac{m}{n}$ and $\frac{m'}{n'}$ have different signs, recomputing $x$ and $y$  might be essential to obtain the desired precision for $x+y$. Loss of precision in $x+y$ is sometimes referred to as loss of significance \cite{KC02} or catastrophic cancellation~\cite{FT86}.

It should be noted that Theorem~\ref{thm:add} does not imply that $x+y$ is always fundamentally problematic when $xy<0$ and $|x+y|\approx 0$. In certain cases, the calculation can be adjusted in such a way that loss of significance can be avoided and the expression can be calculated in one pass.

Suppose we want to calculate $\sqrt{x+1}-\sqrt{x}$ for a relatively large $x$. Since $\sqrt{x+1}\approx\sqrt{x}$, we will lose a significant amount of precision if we directly calculate $\sqrt{x+1}-\sqrt{x}$. However, we can change the calculation algorithm by rewriting the expression as follows:
\begin{align*}
\sqrt{x+1}-\sqrt{x}=(\sqrt{x+1}-\sqrt{x})\times \frac{\sqrt{x+1}+\sqrt{x}}{\sqrt{x+1}+\sqrt{x}}=\frac{1}{\sqrt{x+1}+\sqrt{x}}
\end{align*}
In the new expression, all the operations can be approximated with a desired precision in one pass (see Section~\ref{subsec:sqrt} on calculating square root). Hence, we can approximate the new expression without recomputing the sub-expressions with higher precisions. Applying the perturbation analysis of Section~\ref{sec:condition} also shows that $f(x)=\sqrt{x+1}-\sqrt{x}$ is not fundamentally problematic:
\begin{align*}
|\frac{xf'(x)}{f(x)}|=|\frac{x(\frac{1}{2\sqrt{x+1}}-\frac{1}{2\sqrt{x}})}{\sqrt{x+1}-\sqrt{x}}|=\frac{1}{2}\sqrt{\frac{x}{x+1}}<\frac{1}{2}
\end{align*}

Using perturbation analysis, we can identify instances of $x+y$ where adjustments in the algorithm can avoid loss of significance. However, to our knowledge a general scheme for making such adjustments does not exist.

\subsection{Square Root}
\label{subsec:sqrt}
In this section, we calculate $\sqrt{x}$ by approximating the root of $f(y)=y^2-x$ using the Newton-Raphson method \cite{KC02}.
The Newton-Raphson method starts with an initial approximation $y_0$ for $\sqrt{x}$ and iteratively generates a sequence of approximations. Assuming that the precise value of $x$ is available, the sequence of approximations is generated by:
\begin{align*}
y_{n+1}=\frac{y_n^2+x}{2y_n}
\end{align*}
In what follows, we prove a theorem for approximating $\sqrt{x}$ by the Newton-Raphson method when an approximation $(m,n,p)$ of $x$ is available. 
 
\begin{mytheorem}\label{thm:sqrt}
Let $x$ be a real number represented by $(m,n,p)$ such that:
\begin{align*}
\frac{m}{n}=0.b_1\ldots b_k\times 2^a
\end{align*}
where $b_i\in\{0,1\}$ for $1\leq i\leq k$ and $b_1=1, a\in \integer$. Then $\sqrt{x}$ can be represented by $(m',n',p-4N)$ where $N=\lceil \sqrtexp\rceil+1$ and $\frac{m'}{n'}$ is the $N$-th term of the following sequence:
\begin{align*}
y_{n+1}=\frac{y_n^2+\frac{m}{n}}{2y_n},~y_0=2^{\lceil\frac{a}{2} \rceil +1}
\end{align*}

\end{mytheorem}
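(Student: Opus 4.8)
The plan is to separate the total error into two independent contributions and bound each. Write $c=\frac{m}{n}$; since $b_1=1$ the approximant is normalized so that $2^{a-1}\le c<2^{a}$, and from Definition~\ref{def:realrep} with $p\ge 1$ the true value satisfies $x>0$, so every square root below is real. Writing $x=c(1+\delta)$ with $|\delta|<\frac{1}{2^{p}}$, I would split
$$|\sqrt{x}-y_N|\le |\sqrt{x}-\sqrt{c}|+|\sqrt{c}-y_N|,$$
where the first term measures how the input error propagates through $\sqrt{\cdot}$ and the second measures the convergence of the Newton--Raphson iterates $y_0,y_1,\dots,y_N$ to $\sqrt{c}$. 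Note that the iterates are computed from the rational $c$, hence are exact rationals and introduce no rounding of their own.

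For the propagation term, $\sqrt{x}=\sqrt{c}\,\sqrt{1+\delta}$ and $\sqrt{1+\delta}-1=\frac{\delta}{\sqrt{1+\delta}+1}$; bounding the denominator below by a constant greater than $1$ for $|\delta|<\frac12$ gives $|\sqrt{x}-\sqrt{c}|<\sqrt{c}\,\frac{1}{2^{p}}$, i.e. the square root roughly halves the relative error, in agreement with the condition number $\tfrac12$ of $\sqrt{\cdot}$. For the convergence term, the key simplification is the substitution $q_n=\frac{y_n-\sqrt{c}}{y_n+\sqrt{c}}$: using $y_{n+1}-\sqrt{c}=\frac{(y_n-\sqrt{c})^2}{2y_n}$ and $y_{n+1}+\sqrt{c}=\frac{(y_n+\sqrt{c})^2}{2y_n}$ yields $q_{n+1}=q_n^2$, hence $q_N=q_0^{2^{N}}$. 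I would then bound the initial guess: since $2^{a-1}\le c<2^{a}$, the choice $y_0=2^{\lceil a/2\rceil+1}$ satisfies $2<\frac{y_0}{\sqrt{c}}\le 4$ (treating $a$ even and odd separately through the ceiling), so $0<q_0\le \tfrac35<1$, while AM--GM, $y_{n+1}=\tfrac12(y_n+\tfrac{c}{y_n})\ge\sqrt{c}$, shows $y_n\ge\sqrt{c}>0$ for $n\ge 1$, keeping every $q_n$ positive. Inverting the substitution gives the Newton relative error $\frac{y_N-\sqrt{c}}{\sqrt{c}}=\frac{2q_N}{1-q_N}$.

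Finally I would count iterations. The choice $N=\lceil\sqrtexp\rceil+1$ forces $2^{N}\ge 2\log_2(2^{p+3}+1)>2(p+3)$, so $q_N=q_0^{2^{N}}$ lies well below $2^{-(p+3)}$ and therefore the Newton relative error is below $2^{-(p+1)}$; this is exactly why a double logarithm appears, since quadratic convergence squares $q_0$ at each step. Combining the two bounds through the triangle inequality and dividing by $|y_N|=\sqrt{c}+(y_N-\sqrt{c})\ge\sqrt{c}$, the relative error of $y_N$ with respect to $\sqrt{x}$ is seen to stay comfortably within $\frac{1}{2^{p-4N}}$, so $\sqrt{x}$ is represented by $(m',n',p-4N)$; the generous factor $4N$ leaves ample slack over the true loss of at most a couple of bits.

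The main obstacle I expect is the bookkeeping around the initial guess and the iteration count, namely establishing $2<y_0/\sqrt{c}\le 4$ uniformly in the parity of $a$ and translating the double-logarithmic value of $N$ into the claimed precision, rather than the propagation estimate or the triangle-inequality combination, which are routine.
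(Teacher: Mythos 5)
Your proof is correct, but it takes a genuinely different route from the paper's. The paper introduces an auxiliary sequence $z_n$ (the Newton iterates computed with the \emph{exact} value $x$) and splits $|\sqrt{x}-y_N|\leq|\sqrt{x}-z_N|+|z_N-y_N|$: the first term is the Newton convergence analysis (via the identity $\frac{z_N+\sqrt{x}}{z_N-\sqrt{x}}=(\frac{z_0+\sqrt{x}}{z_0-\sqrt{x}})^{2^N}$, which is exactly your $q_{n+1}=q_n^2$ trick in disguise), while the second term is handled by pushing the input error through every arithmetic operation of each iteration using Theorems~\ref{thm:mult}, \ref{thm:add} and \ref{thm:inverse}, yielding the recursion $P(N)=P(N-1)+4$ and hence the $4N$ in the statement. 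You instead pivot on $\sqrt{c}$ with $c=\frac{m}{n}$: one term is a one-shot propagation bound $|\sqrt{x}-\sqrt{c}|<\sqrt{c}\,2^{-p}$ (the condition-number-$\frac12$ estimate), and the other is pure Newton convergence to $\sqrt{c}$, exploiting the observation that the $y_n$ are exact rationals computed from $c$, so no error accumulates inside the iteration at all. This buys you a shorter argument and a sharper conclusion: it exposes that the actual precision loss is a constant number of bits, so the $p-4N$ in the statement is generous slack that is really an artifact of the paper's per-iteration error bookkeeping. What the paper's longer route buys is thematic: it exercises the compositional precision-loss machinery the paper develops for its other operations, and it directly explains where the stated $4N$ comes from. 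Two small cosmetic points in your write-up: positivity of $x$ already follows from Definition~\ref{def:realrep} for every $p\in\nat$ (no need to assume $p\geq1$), and the denominator bound $\sqrt{1+\delta}+1>1$ holds for all $|\delta|<1$, so the restriction to $|\delta|<\frac12$ is unnecessary; neither affects correctness.
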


\begin{proof}
From Definition~\ref{def:realrep} we write:
\begin{align*}
\frac{m}{n}(1-\frac{1}{2^{p}})<x<\frac{m}{n}(1+\frac{1}{2^{p}})
\end{align*}
To prove the theorem, it suffices to show:
\begin{align}
|\sqrt{x}-y_N|<\frac{1}{2^{p-4N}}|y_N|\label{eqn:sqrt}
\end{align}
We rewrite the left hand side of inequality~\eqref{eqn:sqrt}:
\begin{align}
|\sqrt{x}-y_N|=|\sqrt{x}-z_N+z_N-y_N|\leq |\sqrt{x}-z_N|+|z_N-y_N|\label{eqn:sqrt_rewrite}
\end{align}
In inequality~\eqref{eqn:sqrt_rewrite}, $z_N$ is the $N$-th term of the following sequence:
\begin{align*}
z_{n+1}=\frac{z_n^2+x}{2z_n},~z_0=2^{\lceil\frac{a}{2} \rceil+1}
\end{align*}
To prove inequality~\eqref{eqn:sqrt}, it suffices to show that the following inequalities hold:
\begin{align}
|\sqrt{x}-z_N|<\frac{1}{2^{p-4N+1}}|y_N|\label{eqn:sqrt_part_1}\\
|z_N-y_N|<\frac{1}{2^{p-4N+1}}|y_N|\label{eqn:sqrt_part_2}
\end{align}
\textbf{Proof for inequality~\eqref{eqn:sqrt_part_1}:} 
First, we show that $y_n>\sqrt{\frac{x}{2}}$ for all $n\in \nat$:
\begin{align*}
y_n-\sqrt{\frac{x}{2}}=\frac{y_{n-1}^2+\frac{m}{n}}{2y_{n-1}}-\sqrt{\frac{x}{2}}&>\frac{y_{n-1}^2+\frac{m}{n}}{2y_{n-1}}-\sqrt{\frac{m}{2n}(1+\frac{1}{2^p})}\geq \frac{y_{n-1}^2+\frac{m}{n}}{2y_{n-1}}-\sqrt{\frac{m}{n}}\\
&=\frac{y_{n-1}^2+\frac{m}{n}-2y_{n-1}\sqrt{\frac{m}{n}}}{2y_{n-1}}=\frac{(y_{n-1}-\sqrt{\frac{m}{n}})^2}{2y_{n-1}}\geq 0
\end{align*} 
Observe that as $y_n>\sqrt{\frac{x}{2}}$, inequality~\eqref{eqn:sqrt_part_1} is valid, if the following inequality holds:
\begin{align}
|\sqrt{x}-z_N|<\frac{1}{2^{p-4N+\frac{3}{2}}}\sqrt{x}\label{eqn:sqrt_part_1_rewrite}
\end{align}
To prove inequality~\eqref{eqn:sqrt_part_1_rewrite}, we find $N$ such that:
\begin{align}
z_N-\sqrt{x}=\frac{1}{2^{p+2}}\sqrt{x}\label{eqn:seq_prec}
\end{align} 
We calculate the quantity  $\frac{z_N+\sqrt{x}}{z_N-\sqrt{x}}$:
\begin{align}
\frac{z_N+\sqrt{x}}{z_N-\sqrt{x}}=&\frac{\frac{z_{N-1}^2+x}{2z_{N-1}}+\sqrt{x}}{\frac{z_{N-1}^2+x}{2z_{N-1}}-\sqrt{x}}=\frac{z_{N-1}^2+x+2z_{N-1}\sqrt{x}}{z_{N-1}^2+x-2z_{N-1}\sqrt{x}}\nonumber\\
=&(\frac{z_{N-1}+\sqrt{x}}{z_{N-1}-\sqrt{x}})^2=\ldots=(\frac{z_{0}+\sqrt{x}}{z_{0}-\sqrt{x}})^{2^{N}}\label{eqn:sqrt_recur}
\end{align}
Suppose equality~\eqref{eqn:seq_prec} holds for $N$. We can rewrite the right hand side of equality~\eqref{eqn:sqrt_recur} as follows:
\begin{align}
(\frac{z_{0}+\sqrt{x}}{z_{0}-\sqrt{x}})^{2^{N}}=\frac{z_N+\sqrt{x}}{z_N-\sqrt{x}}=\frac{(2+\frac{1}{2^{p+2}})\sqrt{x}}{(\frac{1}{2^{p+2}})\sqrt{x}}=2^{p+3}+1\label{eqn:nr_iteration}
\end{align}
The index $N$ that guarantees the required precision of equality~\eqref{eqn:seq_prec} can be calculated from equality~\eqref{eqn:nr_iteration}:
\begin{align}
&2^{N}\log_2 (\frac{z_{0}+\sqrt{x}}{z_{0}-\sqrt{x}})=\log_2 (2^{p+3}+1)\nonumber\\
&N=\log_2 (\log_2 (2^{p+3}+1))- \log_2 (\log_2 (\frac{z_{0}+\sqrt{x}}{z_{0}-\sqrt{x}}))\label{eqn:nr_iteration_rewrite}
\end{align}
To guarantee that $N$ is well-defined, we show that $z_0>x$. To prove the inequality, we use the assumptions $\frac{m}{n}=0.b_1\ldots b_k\times 2^a, b_i\in \{0,1\} \text{ for }i=1,\ldots , k$ and $b_0=1, z_0=2^{\lceil \frac{a}{2}\rceil +1}$:
\begin{align*}
\sqrt{x}<\sqrt{\frac{m}{n}(1+\frac{1}{2^p})}\leq\sqrt{0.b_1\ldots b_k}\times 2^{\frac{a}{2}}\times \sqrt{2}<2^{\frac{a}{2}}\times \sqrt{2}<z_0
\end{align*}
To obtain an estimation for $N$ from equality~\eqref{eqn:nr_iteration_rewrite}, we calculate an upper bound for $\frac{z_0}{\sqrt{x}}$:
\begin{align*}
\frac{z_0}{\sqrt{x}}<\frac{2^{\lceil \frac{a}{2} \rceil+1}}{\sqrt{\frac{m}{n}(1-\frac{1}{2^{p}})}}<\frac{2^{\frac{a+1}{2} +1}}{\sqrt{0.b_1\ldots b_k}\times 2^{\frac{a}{2}}\times \sqrt{\frac{1}{2}}}\leq \frac{2\sqrt{2}}{\sqrt[4]{2}\times\sqrt{\frac{1}{2}}}=\frac{4}{\sqrt[4]{2}}
\end{align*}
In the worst case, the initial approximation $z_0$ differs from $\sqrt{x}$ by a factor $\frac{4}{\sqrt[4]{2}}$. We use this estimation in equality~\eqref{eqn:nr_iteration_rewrite} to calculate the number of iterations for the Newton-Raphson method:
\begin{align}
N=\log_2 (\log_2 (2^{p+3}+1))-& \log_2 (\log_2 (\frac{\frac{4}{\sqrt[4]{2}}+1}{\frac{4}{\sqrt[4]{2}}-1}))\nonumber\\
<& \lceil\sqrtexp \rceil+1\label{eqn:n_upper}
\end{align}
\textbf{Proof for inequality~\eqref{eqn:sqrt_part_2}:} To prove inequality~\eqref{eqn:sqrt_part_2}, we consider the calculations in $z_N=\frac{z_{N-1}^2+x}{2z_{N-1}}$ and estimate the amount of error that we commit in the approximation $y_N=\frac{y_{N-1}^2+\frac{m}{n}}{2y_{N-1}}$. 

Let $P(k)$ denote the amount of precision that we lose when we approximate $z_k$ by $y_k$. Thus, we lose $P(N-1)$ units of precision if we approximate $z_{N-1}$ by $y_{N-1}$:
\begin{align*}
|z_{N-1}-y_{N-1}|<\frac{1}{2^{p-P(N-1)}}|y_{N-1}|
\end{align*}
The precision is reduced by $2$ units when $z_{N-1}^2$ is approximated by $y_{N-1}^2$ (see Theorem~\ref{thm:mult}):
\begin{align*}
|z_{N-1}^2-y_{N-1}^2|<\frac{1}{2^{p-P(N-1)-2}}|y_{N-1}^2|
\end{align*}
We approximate $z_{N-1}^2+x$ by $y_{N-1}^2+\frac{m}{n}$. We do not lose precision in this approximation (see Theorem~\ref{thm:add}.\ref{thm:add_same_sign}):
\begin{align}
|(z_{N-1}^2+x)-(y_{N-1}^2+\frac{m}{n})|<\frac{1}{2^{p-P(N-1)-2}}|y_{N-1}^2+\frac{m}{n}|\label{eqn:sqrt_ineq1}
\end{align}
Given the approximation $y_{N-1}$ of $z_{N-1}$, one unit of precision is lost in the approximation of $\frac{1}{z_{N-1}}$ (see Theorem~\ref{thm:inverse}):
\begin{align}
|\frac{1}{z_{N-1}}-\frac{1}{y_{N-1}}|&<\frac{1}{2^{p-P(N-1)-1}}|\frac{1}{y_{N-1}}|\nonumber \\
|\frac{1}{2z_{N-1}}-\frac{1}{2y_{N-1}}|&<\frac{1}{2^{p-P(N-1)-1}}|\frac{1}{2y_{N-1}}|\label{eqn:sqrt_ineq2}
\end{align}
Finally, we approximate $\frac{z_{N-1}^2+x}{2z_{N-1}}$ based on the approximations described in inequality~\eqref{eqn:sqrt_ineq1} and \eqref{eqn:sqrt_ineq2} (see Theorem~\ref{thm:mult}):
\begin{align}
|z_N-y_N|=|\frac{z_{N-1}^2+x}{2z_{N-1}}-\frac{y_{N-1}^2+\frac{m}{n}}{2y_{N-1}}|<\frac{1}{2^{p-P(N-1)-4}}|\frac{y_{N-1}^2+\frac{m}{n}}{2y_{N-1}}|\label{eqn:sqrt_ineq3}
\end{align}
From inequality~\eqref{eqn:sqrt_ineq3} we obtain the following recursive formula:
\begin{align*}
P(N)=P(N-1)+4
\end{align*}
Since $y_0=z_0=2^{\lceil\frac{a}{2} \rceil +1}$, we lose $P(N)=P(0)+4N=4N$ units of precision in our approximation of $z_N$. We apply the number of iterations calculated in inequality~\eqref{eqn:n_upper} and obtain:
\begin{align*}
P(N)=4N<4\lceil\sqrtexp \rceil+4
\end{align*}

\end{proof}

\begin{algorithm}
\caption{Square Root}\label{alg:square_root}
\begin{algorithmic}[1]
\Require $\expr$ has the shape $\sqrt{x}$
\Procedure{Compute}{$\expr,p$}

\State Choose $p_x$ such that $p_x\geq p +4\lceil\log_2\big(\log_2(2^{p_x+3}+1)\big) \rceil+4$
\State $N\gets \lceil\log_2\big(\log_2(2^{p_x+3}+1)\big) \rceil+1$
\State $\frac{m}{n}\gets \Call{Compute}{x,p_x}$
\If{$\frac{m}{n}<0$}
	\State ``Undefined operation''
\Else
	\LineComment{$\frac{m}{n}$ can be represented as $0.b_1\ldots b_k\times 2^a$}
 	\LineComment{$b_i\in\{0,1\}$ for $1\leq i\leq k, b_1=1$ and $a\in\integer$}
	\State $a\gets \lfloor\log_2(\frac{m}{n}) \rfloor +1$
	\State $y_0\gets 2^{\lceil\frac{a}{2} \rceil +1}$
	\For{$i=1$ to $N$} 
		\State $y_{i}\gets \frac{y_{i-1}^2+\frac{m}{n}}{2y_{i-1}}$ 
	\EndFor
	\State $\frac{m'}{n'}\gets y_N$
	\State \Return $\frac{m'}{n'}$
\EndIf
\EndProcedure
\end{algorithmic}
\end{algorithm}

Algorithm~\ref{alg:square_root} applies Theorem~\ref{thm:sqrt} to approximate $\sqrt{x}$ with precision $p$. As indicated in Theorem~\ref{thm:sqrt}, loss of precision in the square root can be estimated independently of the argument $x$ and hence Algorithm~\ref{alg:square_root} approximates $\sqrt{x}$ in one pass. We apply perturbation analysis on $f(x)=\sqrt{x}$ to show this:
\begin{align*}
|\frac{xf'(x)}{f(x)}|=|\frac{(x)(\frac{1}{2\sqrt{x}})}{\sqrt{x}}|=\frac{1}{2}
\end{align*}
The quantity $|\frac{xf'(x)}{f(x)}|$ is a small constant. Thus, in our top down approach, we can approximate $\sqrt{x}$ without iterative computations. 

\section{Approximating Transcendental Functions by Riemann Sums}\label{sec:riemann}
In this section we introduce approximations for $e^x$ (Section~\ref{subsec:exp}), $\ln(x)$ (Section~\ref{subsec:log}), and $\arctan(x)$ (Section~\ref{subsec:arctan}). We use Riemann sums to approximate these functions. 

The \textproc{Compute}$(expr,p)$ function introduced in Section~\ref{sec:algebraic_operations} will be extended to allow the approximation of $e^x, \ln(x)$ and $\arctan(x)$ with a given precision $p$. Perturbation analysis will also be used to identify computational problems in which iterative computations are unavoidable. 

\subsection{Exponential Function}\label{subsec:exp}
To approximate the exponential function by Riemann sums, we first provide a simple approximation for $e^x$ where we assume that $x$ is precise. Then we extend this calculation to approximate $e^x$ where $x$ is represented by $(m,n,p)$.

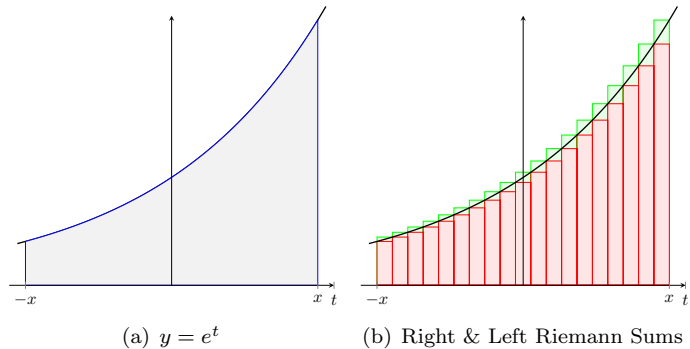
\begin{figure}[t]
\centering
\subfigure[$y=e^t$]{\label{fig:exp}
\begin{tikzpicture}[scale=0.63]
\begin{axis}[
	ytick=\empty,
	xtick=\empty,
    xmax=1,ymax=2.5,ymin=0,xmin=-1,
    enlargelimits=true,
    axis lines=middle,
    clip=false,
    every extra x tick/.style={
        grid=none, 
        tick0/.initial=red,
        tick1/.initial=green,
        tick2/.initial=orange,
        xticklabel style={
            anchor=north,
        },
    },
    extra x ticks ={-0.9,0.9,1},
    extra x tick labels={$-x$,$x$,$t$},   
    domain=-3:3,
    axis on top
    ]
\addplot[smooth, thick,domain=-0.95:0.95,samples=40]{e^x};
\addplot+[mark=none,
        domain=-0.9:0.9,
        samples=100,
        draw=blue,
        fill=gray!10
        ]{e^x} \closedcycle; 
\end{axis}
\end{tikzpicture}}
\subfigure[Right \& Left Riemann Sums]{\label{fig:exp_riemann}
\begin{tikzpicture}[scale=0.63]
\begin{axis}[
	ytick=\empty,
	xtick=\empty,
    xmax=1,ymax=2.5,ymin=0,xmin=-1,
    enlargelimits=true,
    axis lines=middle,
    clip=false,
    every extra x tick/.style={
        grid=none, 
        tick0/.initial=red,
        tick1/.initial=green,
        tick2/.initial=orange,
        xticklabel style={
            anchor=north,
        },
    },
    extra x ticks ={-0.9,0.9,1},
    extra x tick labels={$-x$,$x$,$t$},   
    domain=-3:3,
    axis on top
    ]
\addplot [draw=green, fill=green!10, ybar interval, samples=20, domain=0.9:-0.9]
    {e^x}\closedcycle;
\addplot [draw=red, fill=red!10, ybar interval, samples=20, domain=-0.9:0.9]
    {e^x}\closedcycle;
\addplot[smooth, thick,domain=-0.95:0.95,samples=40]{e^x};
\end{axis}
\end{tikzpicture}}
\caption{Approximating $e^x$}
\end{figure}

Suppose $x>0$. To calculate $e^x$ we consider the curve $y=e^t$ and calculate the area enclosed by this curve and the t-axis between $t=-x$ and $t=x$ as follows (see Fig.~\ref{fig:exp}):
\begin{align*}
\int_{-x}^{x} e^t dt=e^x-e^{-x}
\end{align*}
We use Riemann sums to approximate this area; Fig.~\ref{fig:exp_riemann} shows two approximations from above and below using rectangles. Thus, we get the following inequalities for $N$ rectangles:
\begin{align}
\sum_{i=0}^{N-1}\frac{2x}{N}e^{-x+\frac{2x}{N}i}\leq e^x-e^{-x}\leq \sum_{i=1}^{N}\frac{2x}{N}e^{-x+\frac{2x}{N}i}\label{eqn:exp_sum}
\end{align}
We rewrite inequality~\eqref{eqn:exp_sum} as follows:
\begin{align}
\sum_{i=0}^{N-1}(\frac{2x}{N})e^{\frac{2x}{N}i}&\leq e^{2x}-1\leq \sum_{i=1}^{N}(\frac{2x}{N})e^{\frac{2x}{N}i}\nonumber\\
(\frac{2x}{N})(\frac{e^{2x}-1}{e^{\frac{2x}{N}}-1})&\leq e^{2x}-1\leq (\frac{2x}{N})(\frac{e^{\frac{2x}{N}}(e^{2x}-1)}{e^{\frac{2x}{N}}-1})\nonumber\\
\frac{2x}{N}&\leq e^{\frac{2x}{N}}-1 \leq \frac{2x}{N}e^{\frac{2x}{N}}\label{eqn:exp_sum_rewrite}
\end{align}
We assume that $N>2x$ and calculate an upper bound and a lower bound for $e^x$ from inequality~\eqref{eqn:exp_sum_rewrite}:
\begin{align}
(1+\frac{2x}{N})^{\frac{N}{2}} \leq e^x\leq (\frac{N}{N-2x})^{\frac{N}{2}}\label{eqn:exp_approx}
\end{align}

We can estimate the precision of the approximations calculated in inequality~\eqref{eqn:exp_approx}. For example, we can approximate $e^x$ by $(\frac{N}{N-2x})^{\frac{N}{2}}$ and the absolute error of this approximation can be calculated as follows:
\begin{align}
|(\frac{N}{N-2x})^{\frac{N}{2}}-e^x|&\leq |(\frac{N}{N-2x})^{\frac{N}{2}}-(1+\frac{2x}{N})^{\frac{N}{2}}|\nonumber\\
&=((\frac{N}{N-2x})-(\frac{N+2x}{N}))\sum_{i=0}^{\frac{N}{2}-1}(\frac{N}{N-2x})^i(\frac{N+2x}{N})^{\frac{N}{2}-i-1}\nonumber\\
&=\frac{4x^2}{N(N-2x)}\sum_{i=0}^{\frac{N}{2}-1}(\frac{N}{N-2x})^i(\frac{N+2x}{N})^{\frac{N}{2}-i-1}\nonumber\\
&\leq \frac{4x^2}{N(N-2x)}\sum_{i=0}^{\frac{N}{2}-1}(\frac{N}{N-2x})^{\frac{N}{2}-1}=\frac{2x^2}{N}(\frac{N}{N-2x})^{\frac{N}{2}}\label{eqn:exp_riemann_prec}
\end{align}
For the last inequality we apply $\frac{N}{N-2x} \geq \frac{N+2x}{N}$.

In the discussion above, we have treated $x$ as a precise value. In the following theorem, we extend this calculation and describe an approximation of $e^x$ that relies on a representation $(m,n,p)$ of $x$. To simplify our approximations, we first assume that $|\frac{m}{n}|<1$. Afterwards, we extend our approximations to an arbitrary $(m,n,p)$.
\begin{mytheorem}\label{thm:base_exp}
Let $x$ be a real number represented by $(m,n,p)$ and $|\frac{m}{n}|<1$. Suppose $N$ is a natural number such that $N>2^{\frac{p+11}{3}}$. The value of $e^x$ can be approximated as follows:
\begin{enumerate}[i.]
\item \label{thm:base_exp_1}
If $0<\frac{m}{n}<1$ then $e^x$ can be represented by $(m',n',p-2\lceil\log_2(\frac{N}{2}) \rceil-3)$ where $\frac{m'}{n'}=(\frac{N}{N-\frac{2m}{n}})^{\frac{N}{2}}$.
\item \label{thm:base_exp_2}
If $-1<\frac{m}{n}<0$ then $e^x$ can be represented by $(m',n',p-2\lceil\log_2(\frac{N}{2}) \rceil-4)$ where $\frac{m'}{n'}=\frac{1}{(\frac{N}{N+\frac{2m}{n}})^{\frac{N}{2}}}$.
\end{enumerate}
\end{mytheorem}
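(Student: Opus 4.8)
The goal is to verify the representation condition of Definition~\ref{def:realrep}: writing $\frac{m'}{n'}=(\frac{N}{N-\frac{2m}{n}})^{\frac{N}{2}}$ for case~\ref{thm:base_exp_1}, I must show that $|e^x-\frac{m'}{n'}|<\frac{1}{2^{q}}|\frac{m'}{n'}|$ with $q=p-2\lceil\log_2(\frac{N}{2})\rceil-3$. The plan is to split the total error into a \emph{method error} (the error of the Riemann-sum approximation evaluated at the precise point $\frac{m}{n}$) and an \emph{input error} (the error caused by approximating the true argument $x$ by $\frac{m}{n}$), via the triangle inequality
\begin{align*}
\Big|e^x-\tfrac{m'}{n'}\Big|\le \big|e^x-e^{\frac{m}{n}}\big|+\big|e^{\frac{m}{n}}-\tfrac{m'}{n'}\big|.
\end{align*}

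For the method error I would apply inequalities~\eqref{eqn:exp_approx} and \eqref{eqn:exp_riemann_prec} with the positive precise value $\frac{m}{n}$ in place of $x$; this is legitimate since $0<\frac{m}{n}<1<\frac{N}{2}$, and it yields $|e^{\frac{m}{n}}-\frac{m'}{n'}|\le \frac{2(m/n)^2}{N}\frac{m'}{n'}<\frac{2}{N}\frac{m'}{n'}$, i.e.\ a relative method error below $\frac{2}{N}$. For the input error I would use that $x$ is represented by $(m,n,p)$, so $|x-\frac{m}{n}|<|\frac{m}{n}|\frac{1}{2^{p}}<\frac{1}{2^{p}}$; since $e^{\frac{m}{n}}\le \frac{m'}{n'}$ by \eqref{eqn:exp_approx}, monotonicity of the exponential gives $\frac{|e^x-e^{m/n}|}{m'/n'}\le |e^{x-\frac{m}{n}}-1|<2^{1-p}$, using $e^t-1<2t$ for the small argument $t=|x-\frac{m}{n}|$. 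This step is exactly where the condition number $|\frac{xf'(x)}{f(x)}|=|x|<1$ of $f(x)=e^x$ enters: the input error is not amplified, consistent with the later claim that $e^x$ requires no iteration.

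Combining the two bounds, the total relative error is below $2^{1-p}+\frac{2}{N}$, and it remains to check that $2^{1-p}+\frac{2}{N}<2^{-q}$. Here I would use the hypothesis $N>2^{\frac{p+11}{3}}$, equivalently $N^3>2^{p+11}$, together with the elementary estimate $2^{2\lceil\log_2(N/2)\rceil}\ge (N/2)^2$, which turns the coarsened output precision into a tolerance $2^{-q}\ge 2^{3-p}(N/2)^2=2N^2\,2^{-p}$. The method term $\frac{2}{N}$ fits because $N^3>2^{p}$ gives $\frac{2}{N}<2N^2\,2^{-p}$, and the input term $2^{1-p}$ fits trivially since $1<N^2$; the constant $+3$ (beyond the $2\lceil\log_2(N/2)\rceil$ units needed to house the factor $N^2$) and the margin $+11$ in the bound on $N$ supply the slack to absorb the sum of the two terms and the ceilings. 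Case~\ref{thm:base_exp_2} then follows by reduction: for $-1<\frac{m}{n}<0$ the argument $-x$ is positive with $0<-\frac{m}{n}<1$, so case~\ref{thm:base_exp_1} represents $e^{-x}$ by $(\frac{N}{N+\frac{2m}{n}})^{\frac{N}{2}}$, and applying the inverse rule of Theorem~\ref{thm:inverse} to obtain $e^x=1/e^{-x}$ costs exactly one further unit of precision, which accounts for the $-4$ in place of $-3$.

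The step I expect to be the main obstacle is the final reconciliation in the third paragraph: one must verify that the specific choices $2\lceil\log_2(\frac{N}{2})\rceil+3$ and $N>2^{\frac{p+11}{3}}$ are mutually consistent, i.e.\ that the tolerance $2^{-q}$ manufactured by the coarse output precision genuinely dominates the $O(1/N)$ Riemann-sum error for \emph{every} admissible $N$ and $p$, carefully handling the ceilings. This is a careful but routine bounding exercise; the conceptual content is simply that taking $N$ large enough (so that $N^3$ beats $2^{p}$) drives the method error below the relative precision budget, while the input error is controlled for free by the small condition number of $e^x$.
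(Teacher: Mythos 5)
Your proposal is correct, but it follows a genuinely different route from the paper's proof of case~\ref{thm:base_exp_1}. The paper splits the error at the Riemann formula evaluated at the \emph{true} argument,
\begin{align*}
|e^x-\tfrac{m'}{n'}|\leq \Big|e^x-\Big(\tfrac{N}{N-2x}\Big)^{\frac{N}{2}}\Big|+\Big|\Big(\tfrac{N}{N-2x}\Big)^{\frac{N}{2}}-\Big(\tfrac{N}{N-\frac{2m}{n}}\Big)^{\frac{N}{2}}\Big|,
\end{align*}
and handles the second term by compositional precision propagation through its own arithmetic theorems (one unit for the subtraction $N-2x$ by Theorem~\ref{thm:add}.\ref{thm:add_diff_sign}, one for the inverse by Theorem~\ref{thm:inverse}, and $2\lceil\log_2(\frac{N}{2})\rceil$ for the power by Lemma~\ref{lm:power}), which is exactly how the budget $2\lceil\log_2(\frac{N}{2})\rceil+3$ arises as a structural count; the first term is bounded by maximizing the Riemann error over the whole range $x\in(0,2)$, forcing the worst-case value $\frac{8}{N}(\frac{N}{N-4})^{\frac{N}{2}}$ and requiring Proposition~\ref{prop:ln_bounds} to extract the condition $N>2^{\frac{p+11}{3}}$. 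You instead split at $e^{\frac{m}{n}}$: the method error is \eqref{eqn:exp_riemann_prec} instantiated at the rational point (so no worst case over $(0,2)$ and no Proposition~\ref{prop:ln_bounds}), and the input error is controlled analytically via monotonicity of the exponential and $|e^t-1|<2|t|$, with no use of the compositional machinery at all. Your final reconciliation also checks out: $2^{2\lceil\log_2(N/2)\rceil}\geq (N/2)^2$ gives $2^{-q}\geq 2N^2 2^{-p}$, and $N^3>2^{p+11}$ gives $\frac{2}{N}<2N^2 2^{-p}\cdot 2^{-11}$ while $N^2>2^{22/3}$ gives $2^{1-p}<2N^2 2^{-p}\cdot 2^{-22/3}$, so the \emph{sum} of the two error terms fits strictly inside $2^{-q}$ (each term fitting individually, as you first state, would not suffice, but the $+11$ margin absorbs the sum exactly as you then note). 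Case~\ref{thm:base_exp_2} is handled identically in both proofs (negation, part~\ref{thm:base_exp_1}, then Theorem~\ref{thm:inverse} costing the extra unit). The trade-off: the paper's argument exhibits the precision-loss figure as a structural consequence of its top-down calculus, in keeping with the rest of the article, whereas your argument is shorter and more elementary but treats $2\lceil\log_2(\frac{N}{2})\rceil+3$ as an ad hoc tolerance that merely happens to be generous enough.
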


\begin{proof}
\textit{\\}
\begin{enumerate}[i.]
\item
Suppose $0<\frac{m}{n}<1$. Since $x$ is represented by $(m,n,p)$, we have:
\begin{align}
0<\frac{m}{n}(1-\frac{1}{2^p})<x<\frac{m}{n}(1+\frac{1}{2^p})\leq \frac{2m}{n}<2\label{eqn:exp_bound}
\end{align}
To prove the theorem, it suffices to show that:
\begin{align}
|e^x-(\frac{N}{N-\frac{2m}{n}})^{\frac{N}{2}}|<\frac{1}{2^{p-2\lceil \log_2(\frac{N}{2})\rceil-3}}|(\frac{N}{N-\frac{2m}{n}})^{\frac{N}{2}}|\label{eqn:exp}
\end{align}
We rewrite the left hand side of inequality~\eqref{eqn:exp} as follows:
\begin{align*}
|e^x-(\frac{N}{N-\frac{2m}{n}})^{\frac{N}{2}}|&=|e^x-(\frac{N}{N-2x})^{\frac{N}{2}}+(\frac{N}{N-2x})^{\frac{N}{2}}-(\frac{N}{N-\frac{2m}{n}})^{\frac{N}{2}}|\\
&\leq |e^x-(\frac{N}{N-2x})^{\frac{N}{2}}|+|(\frac{N}{N-2x})^{\frac{N}{2}}-(\frac{N}{N-\frac{2m}{n}})^{\frac{N}{2}}|
\end{align*}
To prove inequality~\eqref{eqn:exp}, it suffices to show that the following inequalities hold:
\begin{align}
&|e^x-(\frac{N}{N-2x})^{\frac{N}{2}}|<\frac{1}{2^{p-2\lceil \log_2(\frac{N}{2})\rceil-2}}|(\frac{N}{N-\frac{2m}{n}})^{\frac{N}{2}}|\label{eqn:exp_part_1} \\
&|(\frac{N}{N-2x})^{\frac{N}{2}}-(\frac{N}{N-\frac{2m}{n}})^{\frac{N}{2}}|<\frac{1}{2^{p-2\lceil\log_2(\frac{N}{2}) \rceil-2}}|(\frac{N}{N-\frac{2m}{n}})^{\frac{N}{2}}|\label{eqn:exp_part_2}
\end{align}
\textbf{Proof for inequality~\eqref{eqn:exp_part_1}:} 
From inequality~\eqref{eqn:exp_riemann_prec}, we obtain an upper-bound for the left hand side of inequality~\eqref{eqn:exp_part_1}:
\begin{align}
|e^x-(\frac{N}{N-2x})^{\frac{N}{2}}|\leq \frac{2x^2}{N}(\frac{N}{N-2x})^{\frac{N}{2}}\label{eqn:exp_part1_upper}
\end{align} 
To calculate an upper bound for the right hand side of inequality~\eqref{eqn:exp_part1_upper}, we consider the function $f(x)=\frac{2x^2}{N}(\frac{N}{N-2x})^{\frac{N}{2}}$ and calculate its derivative:
\begin{align*}
f'(x)=\frac{4x}{N}(\frac{N}{N-2x})^{\frac{N}{2}}+x^2(\frac{N}{N-2x})^{\frac{N}{2}-1}(\frac{2N}{(N-2x)^2})
\end{align*}
From inequality~\eqref{eqn:exp_bound} we obtain $x\in(0,2)$. We choose:
\begin{align}
N> 4>2x\label{eqn:exp_bound1}
\end{align}
to ensure that $f(x)$ is increasing for $x\in (0,2)$, i.e., $f'(x)>0$. We rewrite inequality~\eqref{eqn:exp_part1_upper} as follows:
\begin{align}
|e^x-(\frac{N}{N-2x})^{\frac{N}{2}}|\leq f(x) \leq f(2)=(\frac{8}{N})(\frac{N}{N-4})^{\frac{N}{2}}\label{eqn:exp_part_1_upper1}
\end{align}
We calculate a lower bound for the right hand side of inequality~\eqref{eqn:exp_part_1} as follows:
\begin{align}
\frac{1}{2^{p-2\lceil\log_2(\frac{N}{2}) \rceil-2}}|(\frac{N}{N-\frac{2m}{n}})^{\frac{N}{2}}|>\frac{1}{2^{p-2\log_2(\frac{N}{2})-2}}\label{eqn:exp_part_1_lower}
\end{align}

Thus, to prove inequality~\eqref{eqn:exp_part_1} it suffices to show that the following inequality holds (see inequality~\eqref{eqn:exp_part_1_upper1}, \eqref{eqn:exp_part_1_lower}):
\begin{align*}
(\frac{8}{N})(\frac{N}{N-4})^{\frac{N}{2}}<\frac{1}{2^{p-2\log_2(\frac{N}{2})-2}}
\end{align*}
This is equivalent to the following:
\begin{align}
3-\log_2(N)+\frac{N}{2}\big(\log_2(1+\frac{4}{N-4})\big)<-p+2\log_2(\frac{N}{2})+2\label{eqn:exp_n}
\end{align}
We choose $N>8$ and apply Proposition~\ref{prop:ln_bounds} (see \ref{app:prop_lem}) to obtain an upper bound for $\log_2(1+\frac{4}{N-4})$:
\begin{align}
\log_2(1+\frac{4}{N-4})<\frac{4}{(N-4)\ln(2)}<\frac{8}{N-4}\label{eqn:exp_n_upper}
\end{align}
Based on inequality~\eqref{eqn:exp_n},\eqref{eqn:exp_n_upper}, it is sufficient to find an $N>8$ satisfying: 
\begin{align}
3-\log_2(N)+(\frac{N}{2})(\frac{8}{N-4})<-p+2\log_2(\frac{N}{2})+2\label{eqn:exp_part_1_rewrite_1}
\end{align}
Inequality~\eqref{eqn:exp_part_1_rewrite_1} is equivalent to the following:
\begin{align*}
-\log_2(\frac{N^3}{4})+\frac{16}{N-4}<-p-5
\end{align*}
From $N>8$, we conclude $\frac{16}{N-4}<4$. Thus, we choose $N$ such that $N>\max(2^{\frac{p+11}{3}},8)=2^{\frac{p+11}{3}}$.

\textbf{Proof for inequality~\eqref{eqn:exp_part_2}:} To prove the inequality, we estimate the amount of precision that is lost when we approximate $(\frac{N}{N-2x})^{\frac{N}{2}}$ by $(\frac{N}{N-\frac{2m}{n}})^{\frac{N}{2}}$.

The number $x$ is represented by $(m,n,p)$. Thus, we have:
\begin{align*}
&|x-\frac{m}{n}|<\frac{1}{2^p}|\frac{m}{n}|\\
&|2x-\frac{2m}{n}|<\frac{1}{2^p}|\frac{2m}{n}|
\end{align*}
Since $N>8$, one unit of precision is lost when we approximate $N-2x$ by $N-\frac{2m}{n}$ (see Theorem~\ref{thm:add}.\ref{thm:add_diff_sign}):
\begin{align*}
|(N-2x)-(N-\frac{2m}{n})|<\frac{1}{2^{p-1}}|N-\frac{2m}{n}|
\end{align*}
Approximating $\frac{1}{N-2x}$ by $\frac{1}{N-\frac{2m}{n}}$ reduces the precision by one unit (see Theorem~\ref{thm:inverse}):
\begin{align*}
|\frac{1}{N-2x}-\frac{1}{N-\frac{2m}{n}}|<\frac{1}{2^{p-2}}|\frac{1}{N-\frac{2m}{n}}|\\
|\frac{N}{N-2x}-\frac{N}{N-\frac{2m}{n}}|<\frac{1}{2^{p-2}}|\frac{N}{N-\frac{2m}{n}}|
\end{align*}
Finally, approximating $(\frac{N}{N-2x})^{\frac{N}{2}}$ reduces the precision by $2\lceil \log_2(\frac{N}{2}) \rceil$ units (see Lemma~\ref{lm:power} in \ref{app:prop_lem}):
\begin{align*}
|(\frac{N}{N-2x})^{\frac{N}{2}}-(\frac{N}{N-\frac{2m}{n}})^{\frac{N}{2}}|<\frac{1}{2^{p-2\lceil\log_2(\frac{N}{2}) \rceil -2}}|(\frac{N}{N-\frac{2m}{n}})^{\frac{N}{2}}|
\end{align*}
\item
Suppose $\frac{m}{n}<0$. We use the following identity to calculate $e^x$:
\begin{align*}
e^x=\frac{1}{e^{-x}}
\end{align*}
We represent $-x$ by $(-m,n,p)$ (see Theorem~\ref{thm:unary_neg}). Then, we apply the first part of the theorem and Theorem~\ref{thm:inverse} to approximate $e^{-x}$ and $\frac{1}{e^{-x}}$, respectively.
\end{enumerate}
\end{proof}

In what follows, we extend the approximations of Theorem~\ref{thm:base_exp} and calculate the exponential function for $x$ represented by $(m,n,p)$ where $|\frac{m}{n}|\geq 1$.
\begin{mytheorem}\label{thm:extend_exp}
Let $x$ be a real number represented by $(m,n,p)$ and $|\frac{m}{n}|\geq 1$. Suppose $k$ and $N$ are natural numbers such that:
\begin{align*}
|\frac{m}{2^kn}|<1~,~N>2^{\frac{p+11}{3}}
\end{align*}
The value of $e^x$ can be approximated as follows:
\begin{enumerate}[i.]
\item \label{thm:extend_exp_1}
If $\frac{m}{n}>0$ then $e^x$ can be represented by $(m',n',p-2\lceil\log_2(\frac{N}{2}) \rceil-2k-3)$ where $\frac{m'}{n'}=(\frac{N}{N-\frac{2m}{n}})^{N\cdot 2^{k-1}}$.
\item \label{thm:extend_exp_2}
If $\frac{m}{n}<0$ then $e^x$ can be represented by $(m',n',p-2\lceil\log_2(\frac{N}{2}) \rceil-2k-4)$ where $\frac{m'}{n'}=\frac{1}{(\frac{N}{N+\frac{2m}{n}})^{N\cdot 2^{k-1}}}$.
\end{enumerate}
\end{mytheorem}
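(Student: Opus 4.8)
The plan is to exploit the range-reduction identity $e^x=(e^{x/2^k})^{2^k}$, which reduces the case $|\frac{m}{n}|\geq 1$ to the base case already settled in Theorem~\ref{thm:base_exp}. The first step is to observe that if $x$ is represented by $(m,n,p)$, then $\frac{x}{2^k}$ is represented by $(m,2^kn,p)$: dividing the defining inequality $|x-\frac{m}{n}|<|\frac{m}{n}|\frac{1}{2^p}$ by $2^k$ yields $|\frac{x}{2^k}-\frac{m}{2^kn}|<|\frac{m}{2^kn}|\frac{1}{2^p}$, so the precision index is unchanged by the scaling. By the hypothesis on $k$ we have $|\frac{m}{2^kn}|<1$, so Theorem~\ref{thm:base_exp} applies to the scaled argument with the very same $N$, since the requirement $N>2^{\frac{p+11}{3}}$ depends only on the shared precision $p$.

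Next I would approximate $e^{x/2^k}$ using the appropriate part of Theorem~\ref{thm:base_exp}, selected by the sign of $\frac{m}{n}$ (which coincides with the sign of $\frac{m}{2^kn}$). In case (i), part~\ref{thm:base_exp_1} represents $e^{x/2^k}$ by $(m'',n'',p-2\lceil\log_2(\frac{N}{2})\rceil-3)$ with $\frac{m''}{n''}=(\frac{N}{N-\frac{2m}{2^kn}})^{\frac{N}{2}}$, the value obtained by substituting the representation $(m,2^kn,p)$ into the base formula; in case (ii) the condition $\frac{m}{n}<0$ lets me invoke part~\ref{thm:base_exp_2}, which already absorbs the negative sign through $e^{x/2^k}=1/e^{-x/2^k}$ and therefore costs one additional unit of precision, giving base index $p-2\lceil\log_2(\frac{N}{2})\rceil-4$.

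The remaining step is to raise the approximation of $e^{x/2^k}$ to the power $2^k$ by performing $k$ successive squarings. Each squaring is a multiplication of a represented number by itself, so by Theorem~\ref{thm:mult} it squares both numerator and denominator and lowers the precision index by exactly $2$; after $k$ of them the value $\frac{m''}{n''}$ becomes $(\frac{m''}{n''})^{2^k}$, which equals the claimed $\frac{m'}{n'}=(\frac{N}{N-\frac{2m}{2^kn}})^{N2^{k-1}}$, while the precision index drops by a total of $2k$. Summing with the base loss produces $p-2\lceil\log_2(\frac{N}{2})\rceil-2k-3$ in case (i) and $p-2\lceil\log_2(\frac{N}{2})\rceil-2k-4$ in case (ii), as required. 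I would organise this last step as a short induction on the number of squarings, with the invariant that after $j$ squarings the value $e^{x/2^{k-j}}$ is represented with its precision index reduced by exactly $2j$ from the base value.

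I expect the only real subtlety to be bookkeeping rather than any new estimate. One must be careful that $N$ is governed by the original precision $p$ and not by $p$ diminished by the squaring losses, so that Theorem~\ref{thm:base_exp} is legitimately applied \emph{before} the squarings; and one must confirm that the repeated-squaring loss is additive and independent of the magnitudes involved, which is precisely what the constant condition number $2$ of multiplication guarantees. No new bound on the Riemann-sum error is needed, since all the transcendental-approximation work is inherited unchanged from the base theorem.
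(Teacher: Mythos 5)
Your proof is correct and follows essentially the same route as the paper: range reduction via $e^x=(e^{x/2^k})^{2^k}$, the observation that scaling the argument by $2^{-k}$ leaves the precision index unchanged, an application of Theorem~\ref{thm:base_exp} (with the same $N$, since the bound $N>2^{\frac{p+11}{3}}$ depends only on $p$) to the reduced argument, and a final loss of $2k$ units of precision from the $2^k$-th power. The only cosmetic difference is that the paper cites Lemma~\ref{lm:power} for this last step, whereas you re-derive it by $k$ successive squarings using Theorem~\ref{thm:mult} --- which is exactly how Lemma~\ref{lm:power} is itself proved, so the two arguments coincide.
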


\begin{proof}
Since $|\frac{m}{n}|\geq1$, we choose $k\in\nat$ such that $|\frac{m}{2^kn}|<1$. We use the following identity to calculate $e^x$:
\begin{align}
e^x=(e^{\frac{x}{2^k}})^{2^k} \label{eqn:exp_range}
\end{align} 
We approximate $\frac{x}{2^k}$ by $\frac{m}{2^kn}$. Since $2^k$ is a constant, we do not lose precision in this approximation. We apply Theorem~\ref{thm:base_cos} to approximate $e^\frac{x}{2^k}$. This approximation reduces the precision by:
\begin{itemize}
\item
$2\lceil\log_2(\frac{N}{2}) \rceil +3$ units, if $0<\frac{m}{2^kn}<1$;
\item
$2\lceil\log_2(\frac{N}{2}) \rceil +4$ units, if $-1<\frac{m}{2^kn}<0$.
\end{itemize}
Suppose $\frac{m'}{n'}$ is the approximation obtained for $e^{\frac{x}{2^k}}$ from Theorem~\ref{thm:base_exp}. We approximate $(e^{\frac{x}{2^k}})^{2^k}$ by $(\frac{m'}{n'})^{2^k}$; we lose $2k$ units of precision in this calculation (see Lemma~\ref{lm:power} in \ref{app:prop_lem}).

\end{proof}

\begin{algorithm}
\caption{Exponential Function}\label{alg:exp}
\begin{algorithmic}[1]
\Require $\expr$ has the shape $e^x$
\Procedure{Compute}{$\expr,p$}
\State Choose $N$ such that $N>2^{\frac{p+11}{3}}$
\State $p_x\gets p+2\lceil\log_2(\frac{N}{2}) \rceil+4$
\Repeat \label{algline:exp_go}
\State $\frac{m}{n}\gets \Call{Compute}{x,p_x}$\label{algline:exp}
\If{$0<\frac{m}{n}<1$} \Comment{Theorem~\ref{thm:base_exp}.\ref{thm:base_exp_1}}
	\State $\frac{m'}{n'}\gets (\frac{N}{N-\frac{2m}{n}})^{\frac{N}{2}}$
	\State \Return $\frac{m'}{n'}$
\ElsIf{$-1<\frac{m}{n}<0$} \Comment{Theorem~\ref{thm:base_exp}.\ref{thm:base_exp_2}}
	\State $\frac{m'}{n'}\gets \frac{1}{(\frac{N}{N+\frac{2m}{n}})^{\frac{N}{2}}}$
	\State \Return $\frac{m'}{n'}$
\Else
	\State Choose $k\in\nat$ such that $|\frac{m}{2^kn}|<1$
	\If{$(\frac{m}{n}>0) \wedge$ \Comment{Theorem~\ref{thm:extend_exp}.\ref{thm:extend_exp_1}}\\
		\hskip\algorithmicindent\hskip\algorithmicindent\phantom{\textbf{else}\textbf{if}}$(p_x-2\lceil\log_2(\frac{N}{2}) \rceil-2k-3\geq p)$}
		\State $\frac{m'}{n'}\gets (\frac{N}{N-\frac{2m}{n}})^{N\cdot 2^{k-1}}$
		\State \Return $\frac{m'}{n'}$
	\ElsIf{$(\frac{m}{n}<0) \wedge$ \Comment{Theorem~\ref{thm:extend_exp}.\ref{thm:extend_exp_2}}\\
	 \hskip\algorithmicindent\hskip\algorithmicindent\phantom{\textbf{else}\textbf{else if}}$(p_x-2\lceil\log_2(\frac{N}{2}) \rceil-2k-4\geq p)$}	
		\State $\frac{m'}{n'}\gets \frac{1}{(\frac{N}{N+\frac{2m}{n}})^{N\cdot 2^{k-1}}}$
		\State \Return $\frac{m'}{n'}$
	\Else
		\State $p_x\gets p_x +1$ \label{algline:exp_inc}
	\EndIf
\EndIf
\Until{$\true$}
\EndProcedure
\end{algorithmic}
\end{algorithm}
Algorithm~\ref{alg:exp} implements the approximations described by Theorem~\ref{thm:base_exp} and \ref{thm:extend_exp} to calculate $e^x$ with arbitrary precision. Observe that when $|\frac{m}{n}|<1$, $e^x$ can be approximated in one pass. However, when $|\frac{m}{n}|\geq 1$, loss of precision depends on the magnitude of $|\frac{m}{n}|$. Thus, recomputing $x$ with higher precisions might be necessary to compensate for the  loss of precision caused by applying equality~\eqref{eqn:exp_range} (see Line~\ref{algline:exp_go},\ref{algline:exp_inc} in Algorithm~\ref{alg:exp}).

To confirm that iterative computations are unavoidable, we apply perturbation analysis on $f(x)=e^x$:
\begin{align*}
|\frac{xf'(x)}{f(x)}|=|\frac{xe^x}{e^x}|=|x|
\end{align*}
The quantity $|x|$ can become arbitrarily large and hence approximating $e^x$ with precision $p$ in one pass is not always possible.

\subsection{Natural Logarithm}\label{subsec:log}
In this section, we first discuss an approximation for $\ln(x)$ based on Riemann sums where we assume that $x$ is precise. Then we extend this calculation to approximate $\ln(x)$ where $x$ is represented by $(m,n,p)$.

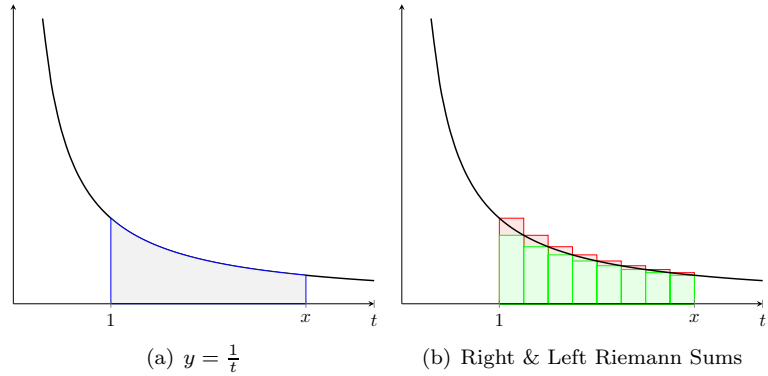
\begin{figure}[t]
\centering
\subfigure[$y=\frac{1}{t}$]{\label{fig:inverse}
\begin{tikzpicture}[scale=0.7]
\begin{axis}[
	ytick=\empty,
	xtick=\empty,
    xmax=3.7,ymax=3.5,ymin=0,xmin=0,
    enlargelimits=true,
    axis lines=middle,
    clip=false,
    every extra x tick/.style={
        grid=none, 
        tick0/.initial=red,
        tick1/.initial=green,
        tick2/.initial=orange,
        xticklabel style={
            anchor=north,
        },
    },
    extra x ticks ={1,3,3.7},
    extra x tick labels={$1$,$x$,$t$},   
    domain=0:4,
    axis on top
    ]
\addplot[smooth, thick,domain=0.3:3.7,samples=40]{x^-1};
\addplot+[mark=none,
        domain=1:3,
        samples=100,
        draw=blue,
        fill=gray!10
        ]{x^-1} \closedcycle; 
\end{axis}
\end{tikzpicture}}
\subfigure[Right \& Left Riemann Sums]{\label{fig:ln_riemann}
\begin{tikzpicture}[scale=0.7]
\begin{axis}[
	ytick=\empty,
	xtick=\empty,
    xmax=3.7,ymax=3.5,ymin=0,xmin=0,
    enlargelimits=true,
    axis lines=middle,
    clip=false,
    every extra x tick/.style={
        grid=none, 
        tick0/.initial=red,
        tick1/.initial=green,
        tick2/.initial=orange,
        xticklabel style={
            anchor=north,
        },
    },
    extra x ticks ={1,3,3.7},
    extra x tick labels={$1$,$x$,$t$},   
    domain=0:4,
    axis on top
    ]
\addplot [draw=red, fill=red!10, ybar interval, samples=9, domain=1:3]
    {x^-1}\closedcycle;
\addplot [draw=green, fill=green!10, ybar interval, samples=9, domain=3:1]
    {x^-1}\closedcycle;
\addplot[smooth, thick,domain=0.3:3.7,samples=40]{x^-1};
\end{axis}
\end{tikzpicture}}
\caption{Approximating $\ln(x)$}
\end{figure}

Suppose $x>1$ is a real number and we want to approximate $\ln(x)$. We consider the curve $y=\frac{1}{t}$ (see Fig.~\ref{fig:inverse}) and calculate the area enclosed by this curve and the t-axis between $t=1$ and $t=x$. This area can be calculated as follows:
\begin{align*}
\int_1^{x} \frac{dt}{t}=\ln(x)
\end{align*} 
We use Riemann sums to approximate this area; Fig.~\ref{fig:ln_riemann} shows how the area can be approximated from below and above using rectangles. Thus, we get the following inequalities for $N$ rectangles:
\begin{align*}
\lnsuml{x}{N} \leq \ln(x) \leq \lnsumu{x}{N}  
\end{align*}  
This gives us an upper bound and a lower bound for $\ln(x)$ and by increasing $N$ we get more precise approximations. 

We can estimate the precision of our approximations. For instance, if we approximate $\ln(x)$ by $\lnsumu{x}{N}$ the absolute error can be estimated as follows:
\begin{align}
&|\lnsumu{x}{N}-\ln(x)|\nonumber\\
&\phantom{\frac{x-1}{N}\sum}\leq|\lnsumu{x}{N}-\lnsuml{x}{N}|\nonumber\\
&\phantom{\frac{x-1}{N}\sum}= \frac{x-1}{N}(1-\frac{1}{x})=\frac{(x-1)^2}{Nx}\label{eqn:ln_riemann_prec}
\end{align}

Up to this point, we have assumed that the precise value of $x$ is available. In what follows, we formulate a theorem to describe an approximation of $\ln(x)$ based on a representation $(m,n,p)$ of $x$. 
\begin{mytheorem}\label{thm:ln}
Let $x$ be a real number represented by $(m,n,p)$ such that $p\geq 1$.
\begin{enumerate}[i.]
\item
If $\frac{m}{n}>1$, then $\ln(x)$ can be represented by $(m',n',p-j-4)$ where $\frac{m'}{n'}=\lnsumu{(\frac{m}{n})}{N}$, $N=\lceil 2^{p-2}\frac{(\frac{m}{n})^2}{\frac{m}{n}-1}\rceil$, and $j$ is the smallest natural number such that $j\geq \log_2(\frac{1+\frac{n}{m}}{1-\frac{n}{m}})$ holds. 
\item
If $0<\frac{m}{n}<1$, then $\ln(x)$ can be represented by $(m',n',p-j-5)$ where $\frac{m'}{n'}=-\lnsumu{(\frac{n}{m})}{N}$, $N=\lceil 2^{p-2}\frac{(\frac{n}{m})^2}{\frac{n}{m}-1}\rceil$, and $j$ is the smallest natural number such that $j\geq \log_2(\frac{1+\frac{m}{n}}{1-\frac{m}{n}})$ holds. 
\end{enumerate}

\end{mytheorem}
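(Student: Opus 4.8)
The plan is to reuse the two-stage decomposition from the exponential case (Theorem~\ref{thm:base_exp}) and to reduce case~ii to case~i. For case~ii, where $0<\frac{m}{n}<1$, I would write $\ln(x)=-\ln(\frac{1}{x})$: by Theorem~\ref{thm:inverse} the number $\frac{1}{x}$ is represented by $(n,m,p-1)$ with $\frac{n}{m}>1$, so applying case~i to it produces a representation of $\ln(\frac{1}{x})$ with precision $(p-1)-j-4=p-j-5$, and the negation is free by Theorem~\ref{thm:unary_neg}. The threshold $j\geq\log_2(\frac{1+\frac{m}{n}}{1-\frac{m}{n}})$ is exactly case~i's threshold evaluated at the argument $\frac{n}{m}$, and the stated $N$ is at least as large as the one the reduction requires, so it only sharpens the truncation bound. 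Thus the work concentrates on case~i, where $\frac{m}{n}>1$ and $\frac{m'}{n'}$ is the upper Riemann sum $\lnsumu{t}{N}$ evaluated at $t=\frac{m}{n}$.

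Writing $R(t)=\lnsumu{t}{N}$, I would split
\begin{align*}
|\ln(x)-R(\tfrac{m}{n})|\leq |\ln(x)-R(x)|+|R(x)-R(\tfrac{m}{n})|
\end{align*}
and bound each summand by $\frac{1}{2^{p-j-3}}|R(\frac{m}{n})|$, so the halves combine to the required $\frac{1}{2^{p-j-4}}|R(\frac{m}{n})|$. The perturbation term $|R(x)-R(\frac{m}{n})|$ is handled by operation-by-operation precision accounting, exactly as in the proof of inequality~\eqref{eqn:exp_part_2}. The decisive step is the subtraction $t-1$: since $t=\frac{m}{n}>1$ and $1$ have opposite signs with $\frac{\min}{\max}=\frac{n}{m}$, Theorem~\ref{thm:add}.\ref{thm:add_diff_sign} loses exactly $j$ units, which is precisely where the cancellation as $\frac{m}{n}\to 1$ is paid for. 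Scaling $t-1$ by the constants $\frac{i}{N}$ and $\frac{1}{N}$ is free; the additions $1+\frac{i}{N}(t-1)$ are between positive quantities and so are free by Theorem~\ref{thm:add}.\ref{thm:add_same_sign}; each reciprocal costs one unit by Theorem~\ref{thm:inverse}; summing the $N$ positive terms is free by repeated use of Theorem~\ref{thm:add}.\ref{thm:add_same_sign}; and the final product $\frac{t-1}{N}\cdot\sum(\cdots)$ costs two units by Theorem~\ref{thm:mult}. The running total $j+1+2=j+3$ yields $|R(x)-R(\frac{m}{n})|<\frac{1}{2^{p-j-3}}|R(\frac{m}{n})|$.

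For the truncation term I would invoke the Riemann-sum estimate~\eqref{eqn:ln_riemann_prec}, $|\ln(x)-R(x)|\leq\frac{(x-1)^2}{Nx}$, together with $N=\lceil 2^{p-2}\frac{(\frac{m}{n})^2}{\frac{m}{n}-1}\rceil$. Because $R(\frac{m}{n})$ is an upper sum, $R(\frac{m}{n})\geq\ln(\frac{m}{n})>\frac{\frac{m}{n}-1}{\frac{m}{n}}>0$, and this lower bound is what converts the absolute error into a relative one. Using $p\geq1$ to control $x$ in terms of $\frac{m}{n}$ through Definition~\ref{def:realrep}, the choice of $N$ makes $\frac{(x-1)^2}{Nx}$ of order $\frac{(\frac{m}{n}-1)^3}{2^{p-2}(\frac{m}{n})^3}$; dividing by the lower bound $\frac{\frac{m}{n}-1}{\frac{m}{n}}$ leaves the factor $\frac{(\frac{m}{n}-1)^2}{(\frac{m}{n})^2}<1$ against the available budget $2^{j+1}$, so the truncation term sits well inside $\frac{1}{2^{p-j-3}}|R(\frac{m}{n})|$ with room to spare.

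The main obstacle I expect is the truncation term, not the bookkeeping. The estimate~\eqref{eqn:ln_riemann_prec} is stated at the \emph{true} argument $x$, whereas $N$ is fixed from the \emph{approximate} argument $\frac{m}{n}$; when $\frac{m}{n}$ is close to $1$ the perturbed value $x$ may lie on the other side of $1$, so I must check that $\frac{(x-1)^2}{Nx}$ stays controlled, in particular that $x$ does not make the denominator too small, uniformly over the interval permitted by Definition~\ref{def:realrep}. The generous slack $\frac{(\frac{m}{n}-1)^2}{(\frac{m}{n})^2}<1$ makes this robust, but a fully rigorous argument needs a case split on whether $x\gtrless 1$ and an explicit lower bound on $x$ drawn from $p\geq1$. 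A secondary point worth verifying is that summing $N$ reciprocals genuinely loses no precision, which holds because all summands share one sign and hence Theorem~\ref{thm:add}.\ref{thm:add_same_sign} applies at every addition.
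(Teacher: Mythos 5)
Your proposal is correct and follows essentially the same route as the paper's proof: the same triangle-inequality split, the same $j+1+2=j+3$ operation-by-operation accounting for the perturbation term, the same choice of $N$ for the truncation term, and the same reduction of case~ii to case~i via $\ln(x)=-\ln(\frac{1}{x})$ with Theorems~\ref{thm:inverse} and \ref{thm:unary_neg}. The obstacle you flag in the truncation term is resolved exactly as you anticipate: the paper uses $p\geq 1$ to confine $x$ to $[\frac{1}{2},\frac{3m}{2n}]$ and bounds $\frac{(x-1)^2}{Nx}$ by its values at the two endpoints via the monotonicity of that function on $[\frac{1}{2},1]$ and $[1,\frac{3m}{2n}]$.
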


\begin{proof}
\textit{\\}
\begin{enumerate}[i.]
\item
Suppose $\frac{m}{n}>1$. The number $x$ is represented by $(m,n,p)$ and hence we can write:
\begin{align}
\frac{1}{2}<\frac{m}{2n}\leq \frac{m}{n}(1-\frac{1}{2^p})&<x<\frac{m}{n}(1+\frac{1}{2^p})\leq \frac{3m}{2n} \label{eqn:ln_x_bound}
\end{align}
To prove the theorem, we need to prove the following inequality:
\begin{align}
&|\ln(x)-\lnsumu{\frac{m}{n}}{N}|\nonumber\\
&\phantom{|\ln(x)-\frac{\frac{m}{n}-1}{N}\sum_{i=0}^{N-1}}<\frac{1}{2^{p-j-4}}|\lnsumu{\frac{m}{n}}{N}|\label{eqn:ln}
\end{align}
We rewrite the left hand side of inequality~\eqref{eqn:ln} as follows:
\begin{align*}
|\ln(x)-&\lnsumu{\frac{m}{n}}{N}|=\\
|\ln(x)-&\lnsumu{x}{N}\\
+&\lnsumu{x}{N}-\lnsumu{\frac{m}{n}}{N}|\leq \\
|\ln(x)-&\lnsumu{x}{N}|\\
+&|\lnsumu{x}{N}-\lnsumu{\frac{m}{n}}{N}|
\end{align*}
To prove inequality~\eqref{eqn:ln}, it suffices to show that the following inequalities hold:
\begin{align}
&|\ln(x)-\lnsumu{x}{N}|<\frac{1}{2^{p-j-3}}|\lnsumu{\frac{m}{n}}{N}|\label{eqn:ln_part_1} \\
&|\lnsumu{x}{N}-\lnsumu{\frac{m}{n}}{N}|\nonumber\\
&\phantom{|\lnsumu{x}{N}-\frac{\frac{m}{n}-1}{N}}<\frac{1}{2^{p-j-3}}|\lnsumu{\frac{m}{n}}{N}|\label{eqn:ln_part_2}
\end{align}
\textbf{Proof for inequality~\eqref{eqn:ln_part_1}:} 
We use inequality~\eqref{eqn:ln_riemann_prec} and calculate an upper bound for the left hand side of inequality~\eqref{eqn:ln_part_1}:
\begin{align}
|\ln(x)-\lnsumu{x}{N}|\leq \frac{(x-1)^2}{Nx}\label{eqn:ln_part_1_upper}
\end{align}
To calculate an upper bound for $\frac{(x-1)^2}{Nx}$, we consider the function $f(x)=\frac{(x-1)^2}{Nx}$ and calculate its derivative:
\begin{align*}
f'(x)=\frac{x^2-1}{Nx^2}
\end{align*}
From inequality~\eqref{eqn:ln_x_bound} we conclude that $x\in [\frac{1}{2},\frac{3m}{2n}]$. The function $f(x)$ is decreasing ($f'(x)\leq 0$) in the interval $[\frac{1}{2},1]$ and increasing ($f'(x)\geq 0$) in the interval $[1,\frac{3m}{2n}]$. Thus, the maximum of $f(x)$ for $x\in [\frac{1}{2},\frac{3m}{2n}]$ is $\max (f(\frac{1}{2}),f(\frac{3m}{2n}))$. We use this to rewrite inequality~\eqref{eqn:ln_part_1_upper}:
\begin{align}
|\ln(x)-\lnsumu{x}{N}|&\leq f(x)\leq \max (f(\frac{1}{2}),f(\frac{3m}{2n}))\nonumber \\ 
&=\max(\frac{1}{2N},\frac{(\frac{3m}{2n}-1)^2}{N(\frac{3m}{2n})})\nonumber\\
&\leq \max (\frac{1}{2N},\frac{(\frac{3m}{2n})^2}{N(\frac{3m}{2n})})=\frac{3m}{2Nn}\label{eqn:ln_part_1_upper_1}
\end{align}
We also calculate a lower bound for the right hand side of inequality~\eqref{eqn:ln_part_1}:
\begin{align}
\frac{1}{2^{p-j-3}}|\lnsumu{\frac{m}{n}}{N}|>&\frac{1}{2^{p-3}}|\frac{(\frac{m}{n}-1)}{N}\sum_{i=0}^{N-1}\frac{1}{1+(\frac{N-1}{N})(\frac{m}{n}-1)}|\nonumber\\
=&\frac{1}{2^{p-3}}.(\frac{m}{n}-1).\frac{N}{N+(N-1)(\frac{m}{n}-1)}\nonumber\\
>&\frac{1}{2^{p-3}}.(\frac{m}{n}-1).\frac{N}{N(1+\frac{m}{n}-1)}=\frac{(\frac{m}{n}-1)}{2^{p-3}(\frac{m}{n})}\label{eqn:ln_part_1_lower}
\end{align}
To show that inequality~\eqref{eqn:ln_part_1} holds, it suffices to prove the following inequality (see inequality~\eqref{eqn:ln_part_1_upper_1},\eqref{eqn:ln_part_1_lower}):
\begin{align*}
\frac{3m}{2Nn}<\frac{(\frac{m}{n}-1)}{2^{p-3}(\frac{m}{n})}
\end{align*}
Thus, it suffices to choose $N\geq 2^{p-2}\frac{(\frac{m}{n})^2}{\frac{m}{n}-1}$.

\textbf{Proof for inequality~\eqref{eqn:ln_part_2}:} To prove inequality~\eqref{eqn:ln_part_2} we consider the calculations in $\lnsumu{x}{N}$ and estimate the amount of precision that we lose in the approximation $\lnsumu{\frac{m}{n}}{N}$.

From Theorem~\ref{thm:add}.\ref{thm:add_diff_sign} we conclude that $j$ units of precision is lost by subtracting $1$ from $x$ where $j\geq \log_2(\frac{1+\frac{n}{m}}{1-\frac{n}{m}})$. We obtain the following inequalities:
\begin{gather*}
|(x-1)-(\frac{m}{n}-1)|<\frac{1}{2^{p-j}}|\frac{m}{n}-1|\\
|\frac{i}{N}(x-1)-\frac{i}{N}(\frac{m}{n}-1)|<\frac{1}{2^{p-j}}|\frac{i}{N}(\frac{m}{n}-1)|
\end{gather*}
The approximation $\frac{i}{N}(\frac{m}{n}-1)$ of $\frac{i}{N}(x-1)$ is positive. Hence, we do not lose precision by adding $\frac{i}{N}(x-1)$ and $1$ (see Theorem~\ref{thm:add}.\ref{thm:add_same_sign}).
\begin{align*}
|(1+\frac{i}{N}(x-1))-(1+\frac{i}{N}(\frac{m}{n}-1))|<\frac{1}{2^{p-j}}|1+\frac{i}{N}(\frac{m}{n}-1)|
\end{align*}
By approximating the inverse of $1+\frac{i}{N}(x-1)$, we lose $1$ unit of precision (see Theorem~\ref{thm:inverse}):
\begin{align*}
|\frac{1}{1+\frac{i}{N}(x-1)}-\frac{1}{1+\frac{i}{N}(\frac{m}{n}-1)}|<\frac{1}{2^{p-j-1}}|\frac{1}{1+\frac{i}{N}(\frac{m}{n}-1)}|
\end{align*}
We lose $2$ units of precision by multiplying $\frac{x-1}{N}$ and $\frac{1}{1+\frac{i}{N}(x-1)}$ (see Theorem~\ref{thm:mult}):
\begin{align*}
|\frac{x-1}{N}.\frac{1}{1+\frac{i}{N}(x-1)}-\frac{(\frac{m}{n}-1)}{N}.&\frac{1}{1+\frac{i}{N}(\frac{m}{n}-1)}|\\
&<\frac{1}{2^{p-j-3}}|\frac{(\frac{m}{n}-1)}{N}.\frac{1}{1+\frac{i}{N}(\frac{m}{n}-1)}|
\end{align*}
Since the numbers $\frac{x-1}{N}.\frac{1}{1+\frac{i}{N}(x-1)}$ are approximated by the positive numbers $\frac{\frac{m}{n}-1}{N}.\frac{1}{1+\frac{i}{N}(\frac{m}{n}-1)}$, calculating the summation $\lnsumu{x}{N}$ does not affect the precision:
\begin{align*}
|\lnsumu{x}{N}-&\lnsumu{\frac{m}{n}}{N}|\\
&<\frac{1}{2^{p-j-3}}|\lnsumu{\frac{m}{n}}{N}|
\end{align*}
\item
Suppose $0<\frac{m}{n}<1$. We use the following identity to approximate $\ln(x)$:
\begin{align*}
\ln(x)=-\ln(\frac{1}{x})
\end{align*}
We approximate $\frac{1}{x}$ by $(n,m,p-1)$ (see Theorem~\ref{thm:inverse}). Then, we apply the first part of the theorem and Theorem~\ref{thm:unary_neg} to approximate $-\ln(\frac{1}{x})$.
\end{enumerate}
\end{proof}

\begin{algorithm}
\caption{Natural Logarithm}\label{alg:log}
\begin{algorithmic}[1]
\Require $\expr$ has the shape $\ln(x)$
\Procedure{Compute}{$\expr,p$}
\State $p_x \gets p+5$
\Repeat \label{algline:ln_go}
\State $\frac{m}{n} \gets \Call{Compute}{x,p_x}$
\If{$\frac{m}{n}>1$} 
	\State $\textit{arg}\gets \frac{m}{n}$
	\State $\ell \gets 4$
\ElsIf{$0<\frac{m}{n}<1$}
	\State $\textit{arg}\gets \frac{n}{m}$
	\State $\ell \gets 5$
\Else
	\State ``Undefined operation''
\EndIf

\State $j\gets \lceil\log_2(\frac{1+\frac{1}{\textit{arg}}}{1-\frac{1}{\textit{arg}}}) \rceil$
\If{$p_x-j-\ell\geq p$}
	\State $N\gets \lceil 2^{p_x-2}\frac{\textit{arg}^2}{\textit{arg}-1}\rceil$
	\State $\frac{m'}{n'}\gets \lnsumu{\textit{arg}}{N}$
	\If{$\frac{m}{n}>1$}
		\State \Return $\frac{m'}{n'}$
	\Else
		\State \Return $\frac{-m'}{n'}$
	\EndIf
\Else
	\State $p_x\gets p_x+1$ \label{algline:ln_inc}
\EndIf
\Until{$\true$}
\EndProcedure
\end{algorithmic}
\end{algorithm}

Algorithm~\ref{alg:log} applies Theorem~\ref{thm:ln} to approximate $\ln(x)$ with arbitrary precision. Note that when the approximation $\frac{m}{n}$ is close to $1$ the amount of precision that we lose in the calculations depends on the magnitude of $\frac{m}{n}$. Loss of precision for $x\approx 1$ in Algorithm~\ref{alg:log} is due to our approximation formula, $\lnsumu{x}{N}$. We divide the interval between $1$ and $x$ into $N$ subintervals and approximate the area under the curve $f(t)=\frac{1}{t}$. The length of the interval $[1,x]$ is crucial in our approximation. Thus, recomputing $x$ with higher precisions is necessary when a significant amount of precision is lost in $x-1$ (see Line~\ref{algline:ln_go},\ref{algline:ln_inc} in Algorithm~\ref{alg:log}). 

To show that approximating $\ln(x)$ for $x\approx 1$ is fundamentally problematic we apply perturbation analysis  on $f(x)=\ln(x)$:  
\begin{align*}
|\frac{xf'(x)}{f(x)}|=|\frac{x(\frac{1}{x})}{\ln(x)}|=|\frac{1}{\ln(x)}|
\end{align*}
When $x\approx 1$ the quantity $\ln(x)$ is a small value and hence committing a small error in the approximation of $x$ causes a significant error in calculating $\ln(x)$. Thus, for $x\approx 1$, iterative computations  are unavoidable.

\subsection{Arctangent}\label{subsec:arctan}
We first introduce an approximation of $\arctan(x)$ using Riemann sums. The assumption is that the precise value of $x$ is available. Afterwards, we extend our calculations to introduce an approximation of $\arctan(x)$ based on a representation $(m,n,p)$ of $x$.

\begin{figure}[t]
\centering
\subfigure[$y=\frac{1}{1+t^2}$]{\label{fig:deriv_arctan}
\begin{tikzpicture}[scale=0.7]
\begin{axis}[
	ytick=\empty,
	xtick=\empty,
    xmax=3.5,ymax=2,ymin=0,xmin=0,
    enlargelimits=true,
    axis lines=middle,
    clip=false,
    every extra x tick/.style={
        grid=none, 
        tick0/.initial=red,
        tick1/.initial=green,
        tick2/.initial=orange,
        xticklabel style={
            anchor=north,
        },
    },
    extra x ticks ={3,3.5},
    extra x tick labels={$x$,$t$},   
    domain=0:3.5,
    axis on top
    ]
\addplot[smooth, thick,domain=0:3,samples=40]{1/(1+x^2)};
\addplot+[mark=none,
        domain=0:3,
        samples=100,
        draw=blue,
        fill=gray!10
        ]{1/(1+x^2)} \closedcycle; 
\end{axis}
\end{tikzpicture}}
\subfigure[Right \& Left Riemann Sums]{\label{fig:arctan_riemann}
\begin{tikzpicture}[scale=0.7]
\begin{axis}[
	ytick=\empty,
	xtick=\empty,
    xmax=3.5,ymax=2,ymin=0,xmin=0,
    enlargelimits=true,
    axis lines=middle,
    clip=false,
    every extra x tick/.style={
        grid=none, 
        tick0/.initial=red,
        tick1/.initial=green,
        tick2/.initial=orange,
        xticklabel style={
            anchor=north,
        },
    },
    extra x ticks ={3,3.5},
    extra x tick labels={$x$,$t$},   
    domain=0:3.5,
    axis on top
    ]

\addplot [draw=red, fill=red!10, ybar interval, samples=14, domain=0:3]
    {1/(1+x^2)}\closedcycle;
\addplot [draw=green, fill=green!10, ybar interval, samples=14, domain=3:0]
    {1/(1+x^2)}\closedcycle;

\addplot[smooth, thick,domain=0:3,samples=40]{1/(1+x^2)};

\end{axis}
\end{tikzpicture}}
\caption{Approximating $\arctan(x)$}
\end{figure}
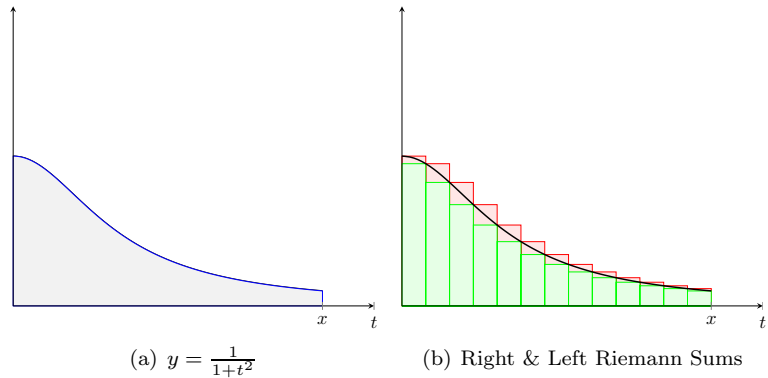
Suppose that $x>0$ is a real number and we want to approximate $\arctan(x)$. We consider the curve $y=\frac{1}{1+t^2}$; see Fig.~\ref{fig:deriv_arctan}. We calculate the area enclosed by this curve and the t-axis between $t=0$ and $t=x$:
\begin{align*}
\int_0^{x} \frac{dt}{1+t^2}=\arctan(x)
\end{align*}
We approximate this area using Riemann sums. Fig.~\ref{fig:arctan_riemann} shows approximations from above and below for the integral using rectangles. From Fig.~\ref{fig:arctan_riemann} we can derive the following inequalities for $N$ rectangles:
\begin{align*}
\arctansuml{x}{N} \leq \arctan(x) \leq \arctansumu{x}{N}
\end{align*}

We want to estimate the precision of our approximations. Suppose we approximate $\arctan(x)$ by $\arctansumu{x}{N}$. The absolute error of this approximation can be estimated as follows:
\begin{align}
|\arctansumu{x}{N}-\arctan(x)|&\leq |\arctansumu{x}{N}-\arctansuml{x}{N}|\nonumber\\
&=\frac{x}{N}(1-\frac{1}{1+x^2})=\frac{x^3}{N(1+x^2)}\label{eqn:arctan_riemann_prec}
\end{align}

Up to this point, we have assumed that $x$ is precisely calculated. In what follows, we assume that $x$ is approximated by $(m,n,p)$. We extend the Riemann sum calculation to compute $\arctan(x)$ using the given approximation of $x$. 
\begin{mytheorem}\label{thm:arctan}
Let $x$ be a real number represented by $(m,n,p)$. Then $\arctan(x)$ can be represented by $(m',n',p-6)$ where $\frac{m'}{n'}=\arctansumu{(\frac{m}{n})}{N}$ and $N= 2^{p-2}\lceil(\frac{m}{n})^2\rceil$.
\end{mytheorem}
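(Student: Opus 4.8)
The plan is to mirror the structure of the proof of Theorem~\ref{thm:ln}: write $\frac{m'}{n'}=\arctansumu{\frac{m}{n}}{N}$ and bound the total relative error by a triangle inequality that separates the \emph{truncation} error of the Riemann sum, computed with the exact argument $x$, from the \emph{propagation} error incurred by replacing $x$ with its approximation $\frac{m}{n}$. Concretely, I would use
\[
|\arctan(x)-\arctansumu{\frac{m}{n}}{N}|\le |\arctan(x)-\arctansumu{x}{N}|+|\arctansumu{x}{N}-\arctansumu{\frac{m}{n}}{N}|,
\]
and then show that each summand is at most $\frac{1}{2^{p-5}}|\arctansumu{\frac{m}{n}}{N}|$; since $\frac{1}{2^{p-5}}+\frac{1}{2^{p-5}}=\frac{1}{2^{p-6}}$, this yields exactly the claimed precision $p-6$. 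Because $\arctan$ is odd and both $N$ and the summand denominators depend only on $(\frac{m}{n})^2$, it suffices to treat $\frac{m}{n}>0$, the case $\frac{m}{n}<0$ following by symmetry; I would also assume the mild bound $p\ge 1$ as in Theorem~\ref{thm:ln}, so that $\frac{m}{2n}<x<\frac{3m}{2n}$.

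For the truncation term I would start from the a priori estimate~\eqref{eqn:arctan_riemann_prec}, namely $|\arctan(x)-\arctansumu{x}{N}|\le \frac{x^3}{N(1+x^2)}$, and compare it to a lower bound for the sum. Since $(\frac{i}{N})^2<1$ for $0\le i\le N-1$, every summand exceeds $\frac{1}{1+(\frac{m}{n})^2}$, whence $\arctansumu{\frac{m}{n}}{N}>\frac{\frac{m}{n}}{1+(\frac{m}{n})^2}$. Using $N\ge 2^{p-2}(\frac{m}{n})^2$, the desired inequality reduces to $\frac{x^3(1+(\frac{m}{n})^2)}{(\frac{m}{n})^3(1+x^2)}<8$. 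This constant chase is the main obstacle: bounding $x^3$ and $\frac{1}{1+x^2}$ separately is too lossy and yields a constant larger than $8$, so I would instead exploit that $h(t)=\frac{t^3}{1+t^2}$ is increasing for $t>0$ and evaluate it at the endpoint $x\le\frac{3m}{2n}$, giving $h(x)\le \frac{27(\frac{m}{n})^3/8}{1+9(\frac{m}{n})^2/4}$. Multiplying by $\frac{1+(\frac{m}{n})^2}{(\frac{m}{n})^3}$ cancels the cubic factor and leaves $\frac{27}{8}\cdot\frac{1+(\frac{m}{n})^2}{1+9(\frac{m}{n})^2/4}\le\frac{27}{8}<8$, which closes the truncation estimate.

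For the propagation term I would track the precision loss as the approximation $\frac{m}{n}$ of $x$ is pushed through the operations that build a single summand and then the whole sum, exactly as in Theorems~\ref{thm:mult},~\ref{thm:inverse} and~\ref{thm:add}. Squaring $x$ costs two units (Theorem~\ref{thm:mult}); multiplying by the exact constant $(\frac{i}{N})^2$ preserves the relative error; adding the exact positive constant $1$ is a same-sign addition and is lossless (Theorem~\ref{thm:add}.\ref{thm:add_same_sign}); inverting costs one unit (Theorem~\ref{thm:inverse}); multiplying by $\frac{x}{N}$, itself of precision $p$, costs two more units (Theorem~\ref{thm:mult}), for a running precision of $p-5$; and finally summing the $N$ strictly positive terms is again same-sign addition and is lossless. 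Hence $|\arctansumu{x}{N}-\arctansumu{\frac{m}{n}}{N}|<\frac{1}{2^{p-5}}|\arctansumu{\frac{m}{n}}{N}|$. Combining this with the truncation bound through the triangle inequality above gives $|\arctan(x)-\frac{m'}{n'}|<\frac{1}{2^{p-6}}|\frac{m'}{n'}|$, that is, $\arctan(x)$ is represented by $(m',n',p-6)$. The only genuinely delicate point is the truncation constant; everything else is bookkeeping of the per-operation precision losses already established.
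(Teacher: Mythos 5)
Your proposal is correct and follows the paper's own proof essentially step for step: the same triangle-inequality split into a Riemann-sum truncation error and an argument-propagation error, each bounded by $\frac{1}{2^{p-5}}$ times the sum; the same $2+1+2$ accounting of precision loss (squaring, inversion, multiplication by $\frac{x}{N}$) with lossless same-sign additions; the same lower bound $\frac{m/n}{1+(m/n)^2}$ for the sum; and the same use of the monotonicity of $t^3/(1+t^2)$ to tame the truncation constant. The only cosmetic deviation is your added assumption $p\ge 1$ (endpoint $\frac{3m}{2n}$, constant $\frac{27}{8}$), whereas the paper uses $x<\frac{2m}{n}$, which holds for every $p\in\nat$ as the theorem statement requires and yields the same cancellation with constant $8$.
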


\begin{proof}
We consider two cases:
\begin{enumerate}
\item
Suppose $\frac{m}{n}>0$. Since $x$ is represented by $(m,n,p)$ we can write:
\begin{align}
\frac{m}{n}(1-\frac{1}{2^p})<x<\frac{m}{n}(1+\frac{1}{2^p})\leq \frac{2m}{n}\label{eqn:arctan_x_bound}
\end{align}
To prove the theorem, we should show that:
\begin{align}
|\arctan(x)-\arctansumu{(\frac{m}{n})}{N}|<\frac{1}{2^{p-6}}|\arctansumu{(\frac{m}{n})}{N}|\label{eqn:arctan}
\end{align}
We rewrite the left hand side of inequality~\eqref{eqn:arctan} as follows:
\begin{align*}
&|\arctan(x)-\arctansumu{(\frac{m}{n})}{N}|=\\
&|\arctan(x)-\arctansumu{x}{N}\\
&\phantom{|\arctan(x)}+\arctansumu{x}{N}-\arctansumu{(\frac{m}{n})}{N}|\leq \\
&|\arctan(x)-\arctansumu{x}{N}|\\
&\phantom{|\arctan(x)}+|\arctansumu{x}{N}-\arctansumu{(\frac{m}{n})}{N}|
\end{align*}
Thus, to prove inequality~\eqref{eqn:arctan}, it suffices to prove the following inequalities:
\begin{align}
&|\arctan(x)-\arctansumu{x}{N}|<\frac{1}{2^{p-5}}|\arctansumu{(\frac{m}{n})}{N}|\label{eqn:arctan_part_1}\\
&|\arctansumu{x}{N}-\arctansumu{(\frac{m}{n})}{N}|\nonumber\\
&\phantom{|\arctan(x)-\arctansumu{x}{N}|}<\frac{1}{2^{p-5}}|\arctansumu{(\frac{m}{n})}{N}|\label{eqn:arctan_part_2}
\end{align}

\textbf{Proof for inequality~\eqref{eqn:arctan_part_1}:}
We first consider the left hand side of inequality~\eqref{eqn:arctan_part_1} and calculate an upper bound for it. From inequality~\eqref{eqn:arctan_riemann_prec} we can write:
\begin{align}
|\arctan(x)-\arctansumu{x}{N}|\leq \frac{x^3}{N(1+x^2)}\label{eqn:arctan_part_1_upper}
\end{align}
To obtain an upper-bound for $\frac{x^3}{N(1+x^2)}$, we consider the function $f(x)=\frac{x^3}{N(1+x^2)}$ and calculate its derivative:
\begin{align*}
f'(x)=\frac{3x^2+x^4}{N(1+x^2)^2}>0
\end{align*}
Thus, $f(x)$ is an increasing function and its maximum occurs when $x$ gets its maximum value. Inequality~\eqref{eqn:arctan_x_bound} implies that $\frac{2m}{n}$ is an upper bound for $x$ and hence we can rewrite inequality~\eqref{eqn:arctan_part_1_upper} as follows:
\begin{align}
|\arctan(x)-\arctansumu{x}{N}|\leq \frac{x^3}{N(1+x^2)}&\leq \frac{8(\frac{m}{n})^3}{N(1+4(\frac{m}{n})^2)}\nonumber\\
&<\frac{8(\frac{m}{n})^3}{N(1+(\frac{m}{n})^2)}\label{eqn:arctan_part_1_upper_1}
\end{align}
We also calculate a lower bound for the right hand side of inequality~\eqref{eqn:arctan_part_1}:
\begin{align}
\frac{1}{2^{p-5}}|\arctansumu{(\frac{m}{n})}{N}|>&(\frac{1}{2^{p-5}})\frac{(\frac{m}{n})}{N}\sum_{i=0}^{N-1}\frac{1}{1+(\frac{N-1}{N})^2(\frac{m}{n})^2}\nonumber\\
=&(\frac{1}{2^{p-5}}).(\frac{m}{n}).(\frac{N^2}{N^2+(N-1)^2(\frac{m}{n})^2})\nonumber\\
>&(\frac{1}{2^{p-5}}).(\frac{m}{n}).(\frac{N^2}{N^2(1+(\frac{m}{n})^2)})\nonumber\\
=&\frac{(\frac{m}{n})}{2^{p-5}(1+(\frac{m}{n})^2)}\label{eqn:arctan_part1_lower}
\end{align}
To prove inequality~\eqref{eqn:arctan_part_1}, it suffices to show that the following inequality holds (see inequality~\eqref{eqn:arctan_part_1_upper_1},\eqref{eqn:arctan_part1_lower}):
\begin{align*}
\frac{8(\frac{m}{n})^3}{N(1+(\frac{m}{n})^2)}<\frac{(\frac{m}{n})}{2^{p-5}(1+(\frac{m}{n})^2)}
\end{align*}
We divide all the components by $\frac{\frac{m}{n}}{(1+(\frac{m}{n})^2)}$, obtaining:
\begin{align*}
\frac{8(\frac{m}{n})^2}{N}<\frac{1}{2^{p-5}}
\end{align*}
Thus, it suffices to take $N= 2^{p-2}\lceil(\frac{m}{n})^2\rceil$.

\textbf{Proof for inequality~\eqref{eqn:arctan_part_2}:}
To prove the inequality, we consider the calculations in $\arctansumu{x}{N}$ and estimate the amount of error that we commit in the approximation $\arctansumu{(\frac{m}{n})}{N}$.

From Theorem~\ref{thm:mult} and inequality~\eqref{eqn:arctan_x_bound}, we conclude that $2$ units of precision is lost by approximating $x^2$ by $(\frac{m}{n})^2$ and hence we obtain:
\begin{gather*}
|x^2-(\frac{m}{n})^2|<\frac{1}{2^{p-2}}|(\frac{m}{n})^2|\\
|(\frac{i}{N})^2x^2-(\frac{i}{N})^2(\frac{m}{n})^2|<\frac{1}{2^{p-2}}|(\frac{i}{N})^2(\frac{m}{n})^2|
\end{gather*}
The approximation $(\frac{i}{N})^2(\frac{m}{n})^2$ of $(\frac{i}{N})^2x^2$ is positive. Thus, we do not lose precision by adding $(\frac{i}{N})^2x^2$ and $1$ (see Theorem~\ref{thm:add}.\ref{thm:add_same_sign}):
\begin{align*}
|(1+(\frac{i}{N})^2x^2)-(1+(\frac{i}{N})^2(\frac{m}{n})^2)|<\frac{1}{2^{p-2}}|1+(\frac{i}{N})^2(\frac{m}{n})^2|
\end{align*}
We lose $1$ unit of precision by approximating the inverse of $1+(\frac{i}{N})^2x^2$ (see Theorem~\ref{thm:inverse}):
\begin{align*}
|\frac{1}{1+(\frac{i}{N})^2x^2}-\frac{1}{1+(\frac{i}{N})^2(\frac{m}{n})^2}|<\frac{1}{2^{p-3}}|\frac{1}{1+(\frac{i}{N})^2(\frac{m}{n})^2}|
\end{align*}
We lose $2$ units of precision in the approximation of $\frac{x}{N}.\frac{1}{1+(\frac{i}{N})^2x^2}$ (see Theorem~\ref{thm:mult}):
\begin{align*}
|\frac{x}{N}.\frac{1}{1+(\frac{i}{N})^2x^2}-\frac{(\frac{m}{n})}{N}.\frac{1}{1+(\frac{i}{N})^2(\frac{m}{n})^2}|<\frac{1}{2^{p-5}}|\frac{(\frac{m}{n})}{N}.\frac{1}{1+(\frac{i}{N})^2(\frac{m}{n})^2}|
\end{align*}
The approximations $\frac{\frac{m}{n}}{N}.\frac{1}{1+(\frac{i}{N})^2(\frac{m}{n})^2}$ are positive for $0\leq i\leq N-1$. Thus, we do not lose precision by calculating the summation $\arctansumu{x}{N}$ (see Theorem~\ref{thm:add}.\ref{thm:add_same_sign}):
\begin{align*}
&|\arctansumu{x}{N}-\arctansumu{(\frac{m}{n})}{N}|\\
&\phantom{|\arctansumu{x}{N}-\frac{(\frac{m}{n})}{N}\sum \frac{m}{n}\frac{m}{n}}<\frac{1}{2^{p-5}}|\arctansumu{(\frac{m}{n})}{N}|
\end{align*}
\item
Suppose $\frac{m}{n}<0$. We can write:
\begin{align*}
\frac{m}{n}(1+\frac{1}{2^p})<x<\frac{m}{n}(1-\frac{1}{2^p})
\end{align*}
Observe that $x<0$; we can use the following identity for $\arctan(x)$:
\begin{align*}
\arctan(x)=-\arctan(-x)
\end{align*}
where $-x>0$. We first approximate $\arctan(-x)$ using the first part of the proof. Afterwards, we apply Theorem~\ref{thm:unary_neg} to approximate $-\arctan(-x)$. Unary negation does not influence the precision. 
\end{enumerate}
\end{proof}

\begin{algorithm}
\caption{Arctangent}\label{alg:arctan}
\begin{algorithmic}[1]
\Require $\expr$ has the shape $\arctan(x)$
\Procedure{Compute}{$\expr,p$}

\State $\frac{m}{n} \gets \Call{Compute}{x,p+6}$
\State $N\gets 2^{p+4}\lceil(\frac{m}{n})^2\rceil$
\State $\frac{m'}{n'}\gets \arctansumu{(\frac{m}{n})}{N}$
\State \Return $\frac{m'}{n'}$
\EndProcedure
\end{algorithmic}
\end{algorithm}

Algorithm~\ref{alg:arctan} applies Theorem~\ref{thm:arctan} to approximate $\arctan(x)$ with arbitrary precision. Note that  Theorem~\ref{thm:arctan} predicts the amount of precision that is lost by calculating $\arctan(x)$ independently of the argument $x$ and hence Algorithm~\ref{alg:arctan} calculates $\arctan(x)$ in one pass. 

To confirm that $\arctan(x)$ is computable in one pass, we apply perturbation analysis and calculate the quantity $|\frac{xf'(x)}{f(x)}|$ for $f(x)=\arctan(x)$:
\begin{align*}
|\frac{xf'(x)}{f(x)}|=|\frac{x(\frac{1}{1+x^2})}{\arctan(x)}|=|\frac{x}{(1+x^2)\arctan(x)}|
\end{align*}
Proposition~\ref{prop:arctan_condition} (see \ref{app:prop_lem}) shows that $|\frac{x}{(1+x^2)\arctan(x)}|<1$; iterative computations are not required for approximating $\arctan(x)$. 

\section{Approximating Transcendental Functions by Taylor Expansions}\label{sec:taylor}
In this section we first briefly discuss the basics of approximating functions using Taylor expansions. Afterwards, we use Taylor expansions to approximate $\sin(x)$ and $\cos(x)$.

Suppose $f: D\rightarrow R$ is a function  and $I=(a,b)\subseteq D$ such that:
\begin{itemize}
\item
$f$ has $n$ continuous derivatives on $I$ (denoted by $f^{(i)}(x)$ for $1\leq i\leq n$);
\item
$f^{(n+1)}$ exists on $I$;
\item
$x_0\in I$. 
\end{itemize}  Taylor's theorem states that for every $x\in I$
there is a number $c_x$ between $x$ and $x_0$ such that $f(x)=P_n(x)+R_n(x)$ where: 
\begin{align}
P_n(x)=\sum_{i=0}^n\frac{f^{(i)}(x_0)}{i!}(x-x_0)^i~,~R_n(x)=\frac{f^{(n+1)}(c_x)}{(n+1)!}(x-x_0)^{n+1}\label{eqn:taylor}
\end{align}
The formula $R_n(x)$ is called the Lagrange form of the remainder \cite{S67}. 

 
In the remaining of this section, we discuss approximations for $\sin(x)$ and $\cos(x)$. Our approximations are based on the following Taylor expansions around the point $x_0=0$: 
\begin{align}
\sin(x)=&\taylorsin{0}{\infty}{x} \label{eqn:taylor_sin}\\
\cos(x)=&\taylorcos{0}{\infty}{x}\label{eqn:taylor_cos}
\end{align}

For each function, we first describe an approximation that is applicable to the base interval $I=(-1,1)$ (see Section~\ref{subsec:base}). Afterwards, we extend our calculations to the complete domain of the functions using range reduction identities (see Section~\ref{subsec:extend}). Proof of correctness for the base and general cases are provided. Moreover, the \textproc{Compute}$(expr,p)$ function is extended to approximate $\sin(x)$ and $\cos(x)$ with arbitrary precision. We use perturbation analysis to confirm the observations obtained from the approximations.

\subsection{Approximating Functions in the Base Interval}\label{subsec:base}
\subsubsection{Sine}\label{subsubsec:base_sin}
In this section we use the Taylor expansion from equality~\eqref{eqn:taylor_sin} to approximate $\sin(x)$. We assume that the input argument $x$ is represented by $(m,n,p)$ and $|\frac{m}{n}|<1$.
\begin{mytheorem}\label{thm:base_sin}
Let $x$ be a real number represented by $(m,n,p)$ such that $-1<\frac{m}{n}<1$. Then $\sin(x)$ can be represented by $(m',n',p-2\lceil\log_2(2N-1)\rceil-3)$ where $\frac{m'}{n'}=\taylorsin{0}{N}{(\frac{m}{n})}$ and $N\in\nat$ is an odd number such that $(\frac{5}{6})(\frac{(2N+2)!(2N-1)^2}{2^{2N}})>2^p$. 
\end{mytheorem}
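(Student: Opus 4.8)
The plan is to verify exactly the relative-error bound required by Definition~\ref{def:realrep}: writing $L=2\lceil\log_2(2N-1)\rceil+3$, I must show
\[
\left|\sin(x)-\taylorsin{0}{N}{(\frac{m}{n})}\right| < \frac{1}{2^{p-L}}\left|\taylorsin{0}{N}{(\frac{m}{n})}\right|.
\]
Following the pattern of Theorems~\ref{thm:base_exp}, \ref{thm:ln} and \ref{thm:arctan}, I would insert the intermediate quantity $\taylorsin{0}{N}{x}$ (the truncated series evaluated at the \emph{exact} argument $x$) and split by the triangle inequality into a \emph{truncation} term $|\sin(x)-\taylorsin{0}{N}{x}|$ and a \emph{propagation} term $|\taylorsin{0}{N}{x}-\taylorsin{0}{N}{(\frac{m}{n})}|$. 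Allocating each term the bound $\frac{1}{2^{p-L+1}}|\taylorsin{0}{N}{(\frac{m}{n})}|=\frac{1}{2^{p-2\lceil\log_2(2N-1)\rceil-2}}|\taylorsin{0}{N}{(\frac{m}{n})}|$ makes the two contributions sum to the desired bound.

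For the truncation term I would use the Lagrange remainder~\eqref{eqn:taylor} for $\sin$. Since $\taylorsin{0}{N}{x}$ is the Taylor polynomial $P_{2N+1}$ and every derivative of $\sin$ is bounded by $1$, the remainder satisfies $|\sin(x)-\taylorsin{0}{N}{x}|\le \frac{|x|^{2N+2}}{(2N+2)!}$. The representation gives $|x|<|\frac{m}{n}|(1+\frac{1}{2^p})\le 2|\frac{m}{n}|$, hence $|x|^{2N+2}<2^{2N+2}|\frac{m}{n}|^{2N+2}\le 2^{2N+2}|\frac{m}{n}|$ because $|\frac{m}{n}|<1$. The decisive input is a lower bound on the computed value: factoring out $\frac{m}{n}$, the bracket $\sum_{i=0}^{N}\frac{(-1)^i}{(2i+1)!}(\frac{m}{n})^{2i}$ is an alternating series whose terms strictly decrease (as $|\frac{m}{n}|<1$), so it lies between $1-\frac{(\frac{m}{n})^2}{6}$ and $1$, giving $|\taylorsin{0}{N}{(\frac{m}{n})}|\ge \frac{5}{6}|\frac{m}{n}|$. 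Replacing the ceiling by $\log_2(2N-1)$ to obtain a clean lower bound on the right-hand side and cancelling the common factor $|\frac{m}{n}|$, the truncation inequality reduces to $2^p<\frac{5}{6}\frac{(2N+2)!(2N-1)^2}{2^{2N}}$, which is precisely the hypothesis on $N$; the oddness of $N$ keeps the alternating-series estimate one-sided.

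For the propagation term I would trace the arithmetic that produces $\taylorsin{0}{N}{(\frac{m}{n})}$ from the approximation $(m,n,p)$ of $x$, as in the $\arctan$ and $\ln$ proofs. Each power $x^{2i+1}$ is obtained through Lemma~\ref{lm:power}, whose cost is logarithmic in the exponent, so the largest power dominates and contributes the $2\lceil\log_2(2N-1)\rceil$ units. Multiplying by the exact rational coefficients $\frac{(-1)^i}{(2i+1)!}$ costs nothing, and the summation is controlled by Theorem~\ref{thm:add}: because $|\frac{m}{n}|<1$ every partial sum stays within magnitude $[\frac{5}{6}|\frac{m}{n}|,|\frac{m}{n}|]$, so each alternating addition has $\min/\max$ bounded away from $1$ and incurs only loss that is absorbed into the remaining $+2$ units. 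This yields $|\taylorsin{0}{N}{x}-\taylorsin{0}{N}{(\frac{m}{n})}|<\frac{1}{2^{p-2\lceil\log_2(2N-1)\rceil-2}}|\taylorsin{0}{N}{(\frac{m}{n})}|$, as required.

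The main obstacle is the bookkeeping in the propagation step. Unlike the single algebraic operations, the Taylor sum combines many multiplications with sign-alternating additions, and the delicate point is to show that the cancellations never become catastrophic — this is exactly what the uniform lower bound on the partial sums (again relying on $|\frac{m}{n}|<1$) buys us — and then to confirm that the accumulated loss collapses to $2\lceil\log_2(2N-1)\rceil+2$ rather than growing linearly with the number of terms. Conceptually, however, the crux is the truncation estimate: it is the alternating-series lower bound $\frac{5}{6}|\frac{m}{n}|$ together with the remainder bound $\frac{|x|^{2N+2}}{(2N+2)!}$ that ties the abstract target precision $p-L$ to the concrete choice of $N$ stated in the theorem.
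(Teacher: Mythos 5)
Your treatment of the truncation term is correct and is essentially the paper's argument in cleaner form: the Lagrange remainder bound $\frac{|x|^{2N+2}}{(2N+2)!}$ together with the alternating-series lower bound $|\taylorsin{0}{N}{(\frac{m}{n})}|\ge \frac{5}{6}|\frac{m}{n}|$ does reduce to the stated hypothesis on $N$. The genuine gap is in the propagation term. You propose to approximate each term $\frac{(-1)^i}{(2i+1)!}x^{2i+1}$ separately and to control the summation with Theorem~\ref{thm:add}, claiming each alternating addition ``incurs only loss that is absorbed into the remaining $+2$ units''. This cannot work as stated: in Theorem~\ref{thm:add}.\ref{thm:add_diff_sign} the loss $i$ is a \emph{positive} natural number, so within this framework every sign-differing addition costs at least one full unit of precision, no matter how far the ratio $\min/\max$ is from $1$. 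A sequential summation of the $N+1$ alternating terms performs $\lceil N/2\rceil$ sign-differing additions, and these losses compound, because each one permanently lowers the precision of the running partial sum. The total loss is therefore of order $N/2$, growing linearly in $N$ (and $N$ grows with $p$), whereas the theorem requires a loss of only $2\lceil\log_2(2N-1)\rceil+2$. Your observation that the partial sums stay in $[\frac{5}{6}|\frac{m}{n}|,|\frac{m}{n}|]$ caps each individual loss at one unit (ratio at most $1/5$), but it provides no mechanism that prevents these unit losses from accumulating.

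The missing idea, which is the paper's actual mechanism, is the pairing identity
\begin{align*}
\taylorsin{0}{N}{x}=\sum_{i=0}^{k}\frac{x^{4i+1}}{(4i+1)!}\Big(1-\frac{x^2}{(4i+2)(4i+3)}\Big),\qquad N=2k+1,
\end{align*}
and this is the real reason $N$ must be odd --- not, as you suggest, to keep the alternating-series estimate one-sided (that estimate holds for every $N\ge 1$). Inside each pair there is exactly one sign-differing subtraction, $1-\frac{x^2}{(4i+2)(4i+3)}$, whose ratio is at most $\frac{1}{6}$ and hence costs at most one unit; crucially, this cost enters the pair through a \emph{multiplication}, so by Theorem~\ref{thm:mult} it combines with the cost $2\lceil\log_2(2N-1)\rceil$ of the power $x^{4i+1}$ via a maximum rather than a sum. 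All pairs then carry the sign of $\frac{m}{n}$ (Proposition~\ref{prop:sin_sign}), so the final summation of pairs is free by Theorem~\ref{thm:add}.\ref{thm:add_same_sign}. Only this structure collapses the propagation loss to $2\lceil\log_2(2N-1)\rceil+2$. Note also that the pairing only ever forms powers up to $x^{4k+1}=x^{2N-1}$, which is where the $2N-1$ in the statement comes from, whereas your term-by-term scheme would need $x^{2N+1}$.
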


\begin{proof}
The real number $x$ is given with precision $p$:
\begin{align}
\frac{m}{n}-|\frac{m}{n}|\leq \frac{m}{n}-\frac{1}{2^p}|\frac{m}{n}|<x<\frac{m}{n}+\frac{1}{2^p}|\frac{m}{n}|\leq \frac{m}{n}+|\frac{m}{n}|\label{eqn:sin_bound}
\end{align}
To prove the theorem, we should show that the following inequality holds:
\begin{align}
|\sin(x)-\taylorsin{0}{N}{(\frac{m}{n})}|<\frac{1}{2^{p-2\lceil\log_2(2N-1)\rceil-3}}|\taylorsin{0}{N}{(\frac{m}{n})}|\label{eqn:sin}
\end{align}
We rewrite the left hand side of inequality~\eqref{eqn:sin} as follows:
\begin{align*}
&|\sin(x)-\taylorsin{0}{N}{(\frac{m}{n})}|=\\
&|\sin(x)-\taylorsin{0}{N}{x}+\taylorsin{0}{N}{x}-\taylorsin{0}{N}{(\frac{m}{n})}|\leq \\
&|\sin(x)-\taylorsin{0}{N}{x}|+|\taylorsin{0}{N}{x}-\taylorsin{0}{N}{(\frac{m}{n})}|
\end{align*}
Thus, we prove inequality~\eqref{eqn:sin} by showing that the following inequalities hold:
\begin{align}
&|\sin(x)-\taylorsin{0}{N}{x}|<\frac{1}{2^{p-2\lceil\log_2(2N-1)\rceil-2}}|\taylorsin{0}{N}{(\frac{m}{n})}|\label{eqn:sin_part_1}\\
&|\taylorsin{0}{N}{x}-\taylorsin{0}{N}{(\frac{m}{n})}|\nonumber\\
&\phantom{|\sin(x)-\taylorsin{0}{N}{x}|}<\frac{1}{2^{p-2\lceil\log_2(2N-1)\rceil-2}}|\taylorsin{0}{N}{(\frac{m}{n})}|\label{eqn:sin_part_2}
\end{align}

\noindent\textbf{Proof for inequality~\eqref{eqn:sin_part_1}:} Based on Taylor's theorem (see equality~\eqref{eqn:taylor}) and inequality~\eqref{eqn:sin_bound}, we can rewrite the left hand side of inequality~\eqref{eqn:sin_part_1} as follows:
\begin{align}
|\sin(x)-\taylorsin{0}{N}{x}|=|\frac{\sin^{(2N+2)}(c_x)x^{2N+2}}{(2N+2)!}|\leq \frac{|\frac{m}{n}|^{2N+2}2^{2N+2}}{(2N+2)!}\label{eqn:sin_part_1_left}
\end{align}
We choose $N=2k+1\geq 1$. Applying Proposition~\ref{prop:sin_sign} (see \ref{app:prop_lem}) we can rewrite the right hand side of inequality~\eqref{eqn:sin_part_1} as follows:
\begin{align}
\frac{1}{2^{p-2\lceil\log_2(2N-1)\rceil-2}}|\taylorsin{0}{N}{(\frac{m}{n})}|=&\bigg(\frac{sgn(\frac{m}{n})}{2^{p-2\lceil\log_2(2N-1)\rceil-2}}\bigg)\nonumber\\
\cdot &\bigg(\taylorsin{0}{N}{(\frac{m}{n})}\bigg)\label{eqn:sin_part_1_right}
\end{align} 
where $sgn$ is the sign function. To prove inequality~\eqref{eqn:sin_part_1}, it suffices to show that the following inequality holds (see \eqref{eqn:sin_part_1_left},\eqref{eqn:sin_part_1_right}):
\begin{align*}
\frac{|\frac{m}{n}|^{2N+2}2^{2N+2}}{(2N+2)!}<\frac{sgn(\frac{m}{n})}{2^{p-2\lceil\log_2(2N-1)\rceil-2}}\taylorsin{0}{N}{(\frac{m}{n})}
\end{align*}
This is equivalent to:
\begin{align}
&\frac{sgn(\frac{m}{n})}{2^{p-2\lceil\log_2(2N-1)\rceil-2}}\taylorsin{0}{N}{(\frac{m}{n})}-\frac{|\frac{m}{n}|^{2N+2}2^{2N+2}}{(2N+2)!}=\nonumber\\
&\frac{sgn(\frac{m}{n})}{2^{p-2\lceil\log_2(2N-1)\rceil-2}}\bigg(\frac{m}{n}-(\frac{1}{3!})(\frac{m}{n})^3+\taylorsin{2}{N}{(\frac{m}{n})}\bigg)\nonumber\\
&\phantom{\frac{sgn(\frac{m}{n})}{2^{p-2\lceil\log_2(2N-1)\rceil-2}}\bigg(\frac{m}{n}-(\frac{1}{3!})(\frac{m}{n})^3}-\frac{|\frac{m}{n}|^{2N+2}2^{2N+2}}{(2N+2)!}>0\label{eqn:sin_part_1_rewrite}
\end{align}
The quantity $sgn(\frac{m}{n})\taylorsin{2}{N}{(\frac{m}{n})}$ is positive (see Proposition~\ref{prop:sin_sign} in \ref{app:prop_lem}). Moreover, the approximation $\frac{m}{n}$ satisfies $|\frac{m}{n}|<1$. To prove inequality~\eqref{eqn:sin_part_1_rewrite} it is sufficient to show:
\begin{align}
\frac{sgn(\frac{m}{n})}{2^{p-2\lceil\log_2(2N-1)\rceil-2}}(\frac{m}{n}-(\frac{1}{3!})(\frac{m}{n})^3)-\frac{|\frac{m}{n}|^{2N+2}2^{2N+2}}{(2N+2)!}>\nonumber\\
\frac{sgn(\frac{m}{n})(2N-1)^2}{2^{p-2}}(\frac{m}{n}-(\frac{1}{3!})(\frac{m}{n})^3)-\frac{|\frac{m}{n}|^{3}2^{2N+2}}{(2N+2)!}>0\label{eqn:sin_part_1_rewrite_1}
\end{align}
Inequality~\eqref{eqn:sin_part_1_rewrite_1} is equivalent to: 
\begin{align*}
&sgn(\frac{m}{n})(\frac{m}{n})(1-(\frac{1}{3!})(\frac{m}{n})^2-\frac{2^{p+2N}}{(2N+2)!(2N-1)^2}(\frac{m}{n})^2)>0
\end{align*}
Since $sgn(\frac{m}{n})(\frac{m}{n})>0$ and $(\frac{m}{n})^2<1$, we should choose an $N$ such that:
\begin{align*}
\frac{1}{6}+\frac{2^{p+2N-2\lceil\log_2(2N-1) \rceil}}{(2N+2)!(2N-1)^2}<1
\end{align*}
Thus, $N$ should satisfy $(\frac{5}{6})\frac{(2N+2)!(2N-1)^2}{2^{2N}}>2^p$.

\noindent\textbf{Proof for inequality~\eqref{eqn:sin_part_2}:} We show that given an approximation $\frac{m}{n}$ of $x$ with precision $p$ we can approximate $\taylorsin{0}{N}{x}$ by $\taylorsin{0}{N}{(\frac{m}{n})}$ with precision $p-2\lceil\log_2(2N-1)\rceil -2$.  

The sign of the terms $\frac{(-1)^i}{(2i+1)!}x^{2i+1}$ alternates between positive and negative. Thus, adding two arbitrary terms with different signs from the expansion can significantly reduce the precision (see Theorem~\ref{thm:add}.\ref{thm:add_diff_sign}). To avoid this, we first consider specific pairs of terms for which loss of precision due to addition is bounded. Then, we calculate the summation of these pairs. The following identity shows the way we calculate $\taylorsin{0}{N}{x}$:
\begin{align*}
\taylorsin{0}{N}{x}=\sum_{i=0}^{k}\frac{x^{4i+1}}{(4i+1)!}(1-\frac{x^2}{(4i+2)(4i+3)})
\end{align*}
Choosing $N=2k+1$ allows us to make pairs of terms. 

The number $x$ is given with precision $p$. Thus, we can approximate $x^2$ with precision $p-2$ (see Theorem~\ref{thm:mult}):
\begin{align*}
|x^2-(\frac{m}{n})^2|<\frac{1}{2^{p-2}}|(\frac{m}{n})^2|
\end{align*}
and hence
\begin{align*}
|\frac{x^2}{(4i+2)(4i+3)}-\frac{(\frac{m}{n})^2}{(4i+2)(4i+3)}|<\frac{1}{2^{p-2}}|\frac{(\frac{m}{n})^2}{(4i+2)(4i+3)}|
\end{align*}
We approximate $1-\frac{x^2}{(4i+2)(4i+3)}$ by $1-\frac{(\frac{m}{n})^2}{(4i+2)(4i+3)}$. Loss of precision in the approximation can be estimated by calculating the quantity $\log_2(\frac{1+\frac{(\frac{m}{n})^2}{(4i+2)(4i+3)}}{1-\frac{(\frac{m}{n})^2}{(4i+2)(4i+3)}})$ (see Theorem~\ref{thm:add}.\ref{thm:add_diff_sign}):
\begin{align*}
\log_2(\frac{1+\frac{(\frac{m}{n})^2}{(4i+2)(4i+3)}}{1-\frac{(\frac{m}{n})^2}{(4i+2)(4i+3)}})\leq \log_2(\frac{1+\frac{1}{6}}{1-\frac{1}{6}})=\log_2(\frac{7}{5})<1
\end{align*}
Thus, we lose at most $1$ unit of precision:
\begin{align*}
|(1-\frac{x^2}{(4i+2)(4i+3)})-(1-\frac{(\frac{m}{n})^2}{(4i+2)(4i+3)})|<\frac{1}{2^{p-3}}|1-\frac{(\frac{m}{n})^2}{(4i+2)(4i+3)}|
\end{align*}
The powers $x^{4i+1}$ are approximated by $(\frac{m}{n})^{4i+1}$ for $0\leq i\leq k$ and $2\lceil\log_2(4i+1) \rceil$ units of precision is lost in this operation. Thus, in the worst case, the precision is reduced by $2\lceil\log_2(4k+1) \rceil$ units: 
\begin{align*}
|x^{4i+1}-(\frac{m}{n})^{4i+1}|<\frac{1}{2^{p-2\lceil\log_2(4k+1)\rceil}}|(\frac{m}{n})^{4i+1}|=\frac{1}{2^{p-2\lceil\log_2(2N-1)\rceil}}|(\frac{m}{n})^{4i+1}|
\end{align*}
Multiplying approximations of $x^{4i+1}$ by a constant does not influence the precision:
\begin{align*}
|\frac{x^{4i+1}}{(4i+1)!}-\frac{(\frac{m}{n})^{4i+1}}{(4i+1)!}|<\frac{1}{2^{p-2\lceil\log_2(2N-1)\rceil}}| \frac{(\frac{m}{n})^{4i+1}}{(4i+1)!}|
\end{align*}
The multiplication $\frac{x^{4i+1}}{(4i+1)!}(1-\frac{x^2}{(4i+2)(4i+3)})$ can be approximated by multiplying the approximations $\frac{(\frac{m}{n})^{4i+1}}{(4i+1)!}$ and $1-\frac{(\frac{m}{n})^2}{(4i+2)(4i+3)}$ (see Theorem~\ref{thm:mult}):
\begin{align*}
|\frac{x^{4i+1}}{(4i+1)!}(1&-\frac{x^2}{(4i+2)(4i+3)})-\frac{(\frac{m}{n})^{4i+1}}{(4i+1)!}(1-\frac{(\frac{m}{n})^2}{(4i+2)(4i+3)})|\\
&<\frac{1}{2^{p-\max\{2\lceil\log_2(2N-1) \rceil,3\}-2}}|\frac{(\frac{m}{n})^{4i+1}}{(4i+1)!}(1-\frac{(\frac{m}{n})^2}{(4i+2)(4i+3)})|\\
&=\frac{1}{2^{p-2\lceil\log_2(2N-1) \rceil-2}}|\frac{(\frac{m}{n})^{4i+1}}{(4i+1)!}(1-\frac{(\frac{m}{n})^2}{(4i+2)(4i+3)})|
\end{align*}
For the last equality, we use the assumptions $(\frac{5}{6})(\frac{(2N+2)!(N-1)^2}{2^{2N-2\lceil\log_2(2N-1) \rceil}})>2^p$ and $N=2k+1$; we can conclude that $N\geq 3$.

For $0\leq i\leq k$ the terms $\frac{(\frac{m}{n})^{4i+1}}{(4i+1)!}(1-\frac{(\frac{m}{n})^2}{(4i+2)(4i+3)})$ have the same sign which is determined by the sign of $\frac{m}{n}$ (see Proposition~\ref{prop:sin_sign} in \ref{app:prop_lem}). Thus, we can approximate $\sum_{i=0}^{k}\frac{x^{4i+1}}{(4i+1)!}(1-\frac{x^2}{(4i+2)(4i+3)})$ by $\sum_{i=0}^{k}\frac{(\frac{m}{n})^{4i+1}}{(4i+1)!}(1-\frac{(\frac{m}{n})^2}{(4i+2)(4i+3)})$ without losing precision (see Theorem~\ref{thm:add}.\ref{thm:add_same_sign}):
\begin{align*}
|\sum_{i=0}^{k}\frac{x^{4i+1}}{(4i+1)!}(1-\frac{x^2}{(4i+2)(4i+3)})-\sum_{i=0}^{k}\frac{(\frac{m}{n})^{4i+1}}{(4i+1)!}(1-\frac{(\frac{m}{n})^2}{(4i+2)(4i+3)})|<\\
\frac{1}{2^{p-2\lceil\log_2(2N-1)\rceil -2}}|\sum_{i=0}^{k}\frac{(\frac{m}{n})^{4i+1}}{(4i+1)!}(1-\frac{(\frac{m}{n})^2}{(4i+2)(4i+3)})|
\end{align*}
\end{proof}

Theorem~\ref{thm:base_sin} provides a top-down approximation for $\sin(x)$ in the base interval. Loss of precision in this approximation is estimated independently of the argument $x$. To show that iterative calculations can be avoided in the base interval, we calculate $|\frac{xf'(x)}{f(x)}|$ for $f(x)=\sin(x)$:
\begin{align}
|\frac{xf'(x)}{f(x)}|=|\frac{x\cdot \cos(x)}{\sin(x)}|=|x\cdot \cot(x)|\label{eqn:pert_sin}
\end{align}
Proposition~\ref{prop:sin_condition} (see \ref{app:prop_lem}) shows that $|x\cdot \cot(x)|<1$ for $x\in (-1,1)$. Thus, we can approximate $\sin(x)$ in the base interval in one pass. 
\subsubsection{Cosine}
\label{subsubsec:base_cos}
In this section we introduce an approximation for $\cos(x)$ where $x$ is represented by $(m,n,p)$ and $|\frac{m}{n}|<1$. Our approximation is based on the Taylor expansion from equality~\eqref{eqn:taylor_cos}.
\begin{mytheorem}\label{thm:base_cos}
Let $x$ be a real number represented by $(m,n,p)$ such that $-1<\frac{m}{n}<1$. Then $\cos(x)$ can be represented by $(m',n',p-2\lceil \log_2(2N-2) \rceil -3)$ where $\frac{m'}{n'}=\taylorcos{0}{N}{(\frac{m}{n})}$ and $N\in\nat$ is an odd number such that $\frac{(2N+1)!(2N-2)^2}{2^{2N}}>2^p$.
\end{mytheorem}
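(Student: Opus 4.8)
The plan is to follow the proof of Theorem~\ref{thm:base_sin} almost verbatim, adapting every estimate to the even Taylor series \eqref{eqn:taylor_cos}. Writing $t=\frac{m}{n}$, the representation $(m,n,p)$ gives $|x-t|<\frac{1}{2^p}|t|\le|t|$ and hence $|x|<2|t|<2$. It suffices to establish
\begin{align*}
|\cos(x)-\taylorcos{0}{N}{t}|<\frac{1}{2^{p-2\lceil\log_2(2N-2)\rceil-3}}|\taylorcos{0}{N}{t}|,
\end{align*}
and I would split this through the triangle inequality by inserting $\pm\taylorcos{0}{N}{x}$. This reduces the goal to two bounds, each with exponent $p-2\lceil\log_2(2N-2)\rceil-2$: a truncation term $|\cos(x)-\taylorcos{0}{N}{x}|$ and a perturbation term $|\taylorcos{0}{N}{x}-\taylorcos{0}{N}{t}|$. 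Summing the two halved bounds produces the $-3$ in the statement.

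For the truncation term I would apply Taylor's theorem \eqref{eqn:taylor} to the degree-$2N$ polynomial of $\cos$, whose Lagrange remainder involves the $(2N+1)$-th derivative; since $|\cos^{(2N+1)}|\le 1$ this yields $|\cos(x)-\taylorcos{0}{N}{x}|\le\frac{|x|^{2N+1}}{(2N+1)!}\le\frac{|t|^{2N+1}2^{2N+1}}{(2N+1)!}$. Choosing the $(2N+1)$-th rather than the $(2N+2)$-th derivative is exactly what produces the factorial $(2N+1)!$ appearing in the hypothesis. To lower-bound the right-hand side I would show $\taylorcos{0}{N}{t}>0$: grouping its $N+1$ terms into consecutive pairs $\frac{t^{4j}}{(4j)!}\bigl(1-\frac{t^2}{(4j+1)(4j+2)}\bigr)$ (possible precisely because $N$ is odd, so $N+1$ is even) makes every pair nonnegative for $|t|<1$, and the leading pair already satisfies $1-\frac{t^2}{2}>\frac12$, so $\taylorcos{0}{N}{t}\ge 1-\frac{t^2}{2}$. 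Using this, the bound $2^{2\lceil\log_2(2N-2)\rceil}\ge(2N-2)^2$, and $|t|^{2N+1}<t^2<1$, the required inequality collapses to $\frac12+\frac{2^{2N+p-1}}{(2N+1)!(2N-2)^2}<1$, which is exactly the stated condition $\frac{(2N+1)!(2N-2)^2}{2^{2N}}>2^p$; note this forces $N\ge 3$, since $(2N-2)^2$ vanishes at $N=1$.

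For the perturbation term I would compute $\taylorcos{0}{N}{x}$ through the same pairing and propagate relative errors step by step. Writing $N=2k+1$: squaring $x$ costs $2$ units (Theorem~\ref{thm:mult}); forming $1-\frac{x^2}{(4j+1)(4j+2)}$ is a cancelling subtraction whose loss is governed by $\log_2\bigl(\frac{1+r}{1-r}\bigr)$ with $r=\frac{t^2}{(4j+1)(4j+2)}\le\frac12$, hence at most $\lceil\log_2 3\rceil=2$ units (Theorem~\ref{thm:add}.\ref{thm:add_diff_sign})---one unit more than in the sine case, because the smallest denominator here is $2$ rather than $6$; the powers $x^{4j}$ cost at most $2\lceil\log_2(4k)\rceil=2\lceil\log_2(2N-2)\rceil$ units (Lemma~\ref{lm:power}), up to the constant factor $(4j)!$; and the final multiplication of the two factors costs $2$ more units (Theorem~\ref{thm:mult}), giving $p-\max\{2\lceil\log_2(2N-2)\rceil,4\}-2=p-2\lceil\log_2(2N-2)\rceil-2$, where the maximum is attained by the power term because $N\ge 3$ makes $2\lceil\log_2(2N-2)\rceil\ge 4$. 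Since all pairs carry the sign of their leading $+1$, summing them loses nothing (Theorem~\ref{thm:add}.\ref{thm:add_same_sign}), which is precisely the perturbation bound.

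The main obstacle is the truncation part: one must select the remainder in the form that exposes $(2N+1)!$ and supply a clean positive lower bound for the partial sum, so that the algebra reduces to exactly the stated $N$-condition. The extra unit of cancellation in the paired subtraction of the perturbation part looks threatening, but it is harmlessly absorbed by the dominant power-loss term $2\lceil\log_2(2N-2)\rceil$, so no extra precision budget is needed and the bookkeeping closes just as it does for $\sin$.
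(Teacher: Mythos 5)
Your proposal is correct and follows essentially the same route as the paper's proof: the same triangle-inequality split into a truncation term and a perturbation term (each with exponent $p-2\lceil\log_2(2N-2)\rceil-2$), the same Lagrange-remainder bound via $\cos^{(2N+1)}$, the same positivity-by-pairing lower bound $\taylorcos{0}{N}{(\frac{m}{n})}\geq 1-\frac{1}{2}(\frac{m}{n})^2$, and the same step-by-step error propagation (squaring, paired subtraction with loss at most $2$, power loss $2\lceil\log_2(2N-2)\rceil$ via Lemma~\ref{lm:power}, final multiplication, lossless summation of same-sign pairs). Even the bookkeeping details match, including the observation that $N\geq 3$ lets the power-loss term absorb the $\max\{4,2\lceil\log_2(2N-2)\rceil\}$.
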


\begin{proof}
The real number $x$ is represented by $(m,n,p)$ and hence we can write:
\begin{align}
\frac{m}{n}-|\frac{m}{n}|<\frac{m}{n}-\frac{1}{2^p}|\frac{m}{n}|<x<\frac{m}{n}+\frac{1}{2^p}|\frac{m}{n}|\leq \frac{m}{n}+|\frac{m}{n}|\label{eqn:cos_bound}
\end{align}
To prove the theorem, we need to show:
\begin{align}
|\cos(x)-\taylorcos{0}{N}{(\frac{m}{n})}|<\frac{1}{2^{p-2\lceil\log_2(2N-2)\rceil -3}}|\taylorcos{0}{N}{(\frac{m}{n})}|\label{eqn:cos}
\end{align}
We rewrite the left hand side of inequality~\eqref{eqn:cos} as follows:
\begin{align*}
&|\cos(x)-\taylorcos{0}{N}{(\frac{m}{n})}|=\\
&|\cos(x)-\taylorcos{0}{N}{x}+\taylorcos{0}{N}{x}-\taylorcos{0}{N}{(\frac{m}{n})}|\leq\\
&|\cos(x)-\taylorcos{0}{N}{x}|+|\taylorcos{0}{N}{x}-\taylorcos{0}{N}{(\frac{m}{n})}|
\end{align*}
We prove inequality~\eqref{eqn:cos} by showing that the following inequalities are valid:
\begin{gather}
|\cos(x)-\taylorcos{0}{N}{x}|<\frac{1}{2^{p-2\lceil\log_2(2N-2)\rceil -2}}|\taylorcos{0}{N}{(\frac{m}{n})}|\label{eqn:cos_part_1}\\
|\taylorcos{0}{N}{x}-\taylorcos{0}{N}{(\frac{m}{n})}|<\frac{1}{2^{p-2\lceil\log_2(2N-2)\rceil -2}}|\taylorcos{0}{N}{(\frac{m}{n})}|\label{eqn:cos_part_2}
\end{gather}

\noindent \textbf{Proof for inequality~\eqref{eqn:cos_part_1}:} We use Taylor's theorem (see equality~\eqref{eqn:taylor}) and the bounds calculated for $x$  in inequality~\eqref{eqn:cos_bound} to rewrite the left hand side of inequality~\eqref{eqn:cos_part_1}:
\begin{align}
|\cos(x)-\taylorcos{0}{N}{x}|=|\frac{\cos^{(2N+1)}(c_x)x^{2N+1}}{(2N+1)!}|\leq \frac{|\frac{m}{n}|^{2N+1}2^{2N+1}}{(2N+1)!}\label{eqn:cos_part_1_left}
\end{align}
We choose $N=2k+1\geq 1$ and apply Proposition~\ref{prop:cos_sign} (see \ref{app:prop_lem}) to rewrite the right hand side of inequality~\eqref{eqn:cos_part_1}:
\begin{align}
\frac{1}{2^{p-2\lceil\log_2(2N-2)\rceil -2}}|\taylorcos{0}{N}{(\frac{m}{n})}|=\frac{1}{2^{p-2\lceil\log_2(2N-2)\rceil -2}}\taylorcos{0}{N}{(\frac{m}{n})}\label{eqn:cos_part_1_right}
\end{align}
To show that inequality~\eqref{eqn:cos_part_1} holds, it suffices to prove the following (see inequality~\eqref{eqn:cos_part_1_left},\eqref{eqn:cos_part_1_right}):
\begin{align}
\frac{|\frac{m}{n}|^{2N+1}2^{2N+1}}{(2N+1)!}<\frac{1}{2^{p-2\lceil\log_2(2N-2)\rceil -2}}\taylorcos{0}{N}{(\frac{m}{n})}\label{eqn:cos_part_1_rewrite}
\end{align}
Inequality~\eqref{eqn:cos_part_1_rewrite} is equivalent to:
\begin{align*}
&\frac{1}{2^{p-2\lceil\log_2(2N-2)\rceil -2}}\taylorcos{0}{N}{(\frac{m}{n})}-\frac{|\frac{m}{n}|^{2N+1}2^{2N+1}}{(2N+1)!}=\\
&\frac{1}{2^{p-2\lceil\log_2(2N-2)\rceil -2}}(1-\frac{1}{2}(\frac{m}{n})^2+\taylorcos{2}{N}{(\frac{m}{n})})-\frac{|\frac{m}{n}|^{2N+1}2^{2N+1}}{(2N+1)!}>0
\end{align*}
From Proposition~\ref{prop:cos_sign} (see \ref{app:prop_lem}) we conclude that the quantity $\taylorcos{2}{N}{(\frac{m}{n})}$ is positive. Since $|\frac{m}{n}|<1$, it suffices to show:
\begin{align}
&\frac{1}{2^{p-2\lceil\log_2(2N-2)\rceil -2}}(1-\frac{1}{2}(\frac{m}{n})^2)-\frac{|\frac{m}{n}|^{2N+1}2^{2N+1}}{(2N+1)!}>\nonumber \\
&\frac{(2N-2)^2}{2^{p -2}}(1-\frac{1}{2}(\frac{m}{n})^2)-\frac{2^{2N+1}}{(2N+1)!}(\frac{m}{n})^2>0\label{eqn:cos_part_1_rewrite1}
\end{align}
Inequality~\eqref{eqn:cos_part_1_rewrite1} is equivalent to:
\begin{align*}
1-\frac{1}{2}(\frac{m}{n})^2-\frac{2^{p+2N-1}}{(2N+1)!(2N-2)^2}(\frac{m}{n})^2>0
\end{align*}
Since $(\frac{m}{n})^2<1$, it is sufficient to choose an $N$ that satisfies the following inequality:
\begin{align*}
\frac{1}{2}+\frac{2^{p+2N-1}}{(2N+1)!(2N-2)^2}<1
\end{align*}
Thus, we should choose an $N=2k+1$ that satisfies $\frac{(2N+1)!(2N-2)^2}{2^{2N}}>2^p$.

\noindent\textbf{Proof for inequality~\eqref{eqn:cos_part_2}:} To prove the inequality, we estimate the amount of precision that is lost when we approximate $\taylorcos{0}{N}{x}$ by $\taylorcos{0}{N}{(\frac{m}{n})}$. The sign of the terms $\frac{(-1)^i}{(2i)!}x^{2i}$ alternates between positive and negative. Thus, adding two arbitrary terms with different signs from the expansion can potentially cause significant loss of precision (see Theorem~\ref{thm:add}.\ref{thm:add_diff_sign}). To avoid this, we first consider pairs of terms for which addition can be calculated with a bounded loss of precision. Afterwards, we calculate the summation of these pairs. The following identity shows our computation scheme:
\begin{align*}
\taylorcos{0}{N}{x}=\sum_{i=0}^{k}\frac{x^{4i}}{(4i)!}(1-\frac{x^2}{(4i+1)(4i+2)})
\end{align*}
Choosing $N=2k+1$ allows us to pair the terms of the summation. 

Since $x$ is given with precision $p$, we can approximate $x^2$ with precision $p-2$ (see Theorem~\ref{thm:mult}): 
\begin{align*}
|x^2-(\frac{m}{n})^2|<\frac{1}{2^{p-2}}|\frac{m}{n}|^2
\end{align*}
Multiplying the approximation of $x^2$ by a constant does not influence the precision:
\begin{align*}
|\frac{x^2}{(4i+1)(4i+2)}-\frac{(\frac{m}{n})^2}{(4i+1)(4i+2)}|<\frac{1}{2^{p-2}}|\frac{(\frac{m}{n})^2}{(4i+1)(4i+2)}|
\end{align*}
We approximate $1-\frac{x^2}{(4i+1)(4i+2)}$ by $1-\frac{(\frac{m}{n})^2}{(4i+1)(4i+2)}$. To estimate lose of precision in our approximation, we calculate the quantity $\log_2(\frac{1+\frac{(\frac{m}{n})^2}{(4i+1)(4i+2)}}{1-\frac{(\frac{m}{n})^2}{(4i+1)(4i+2)}})$ (see Theorem~\ref{thm:add}.\ref{thm:add_diff_sign}):
\begin{align*}
\log_2(\frac{1+\frac{(\frac{m}{n})^2}{(4i+1)(4i+2)}}{1-\frac{(\frac{m}{n})^2}{(4i+1)(4i+2)}})\leq \log_2(\frac{1+\frac{1}{2}}{1-\frac{1}{2}})=\log_2 3<2
\end{align*}
Thus, we lose at most $2$ units of precision:
\begin{align*}
|(1-\frac{x^2}{(4i+1)(4i+2)})-(1-\frac{(\frac{m}{n})^2}{(4i+1)(4i+2)})|<\frac{1}{2^{p-4}}|1-\frac{(\frac{m}{n})^2}{(4i+1)(4i+2)}|
\end{align*}
Approximating $x^{4i}$ by $(\frac{m}{n})^{4i}$ reduces the precision by $2\lceil\log_2 (4i) \rceil$ units (see Lemma~\ref{lm:power} in \ref{app:prop_lem}); in the worst case we lose $2\lceil\log_2 (4k) \rceil$ units of precision:
\begin{align*}
|x^{4i}-(\frac{m}{n})^{4i}|<\frac{1}{2^{p-2\lceil\log_2 (4k) \rceil}}|(\frac{m}{n})^{4i}|=\frac{1}{2^{p-2\lceil\log_2 (2N-2) \rceil}}|(\frac{m}{n})^{4i}|
\end{align*}
Multiplying $(\frac{m}{n})^{4i}$ by a constant factor does not influence the precision of the calculation: 
\begin{align*}
|\frac{x^{4i}}{(4i)!}-\frac{(\frac{m}{n})^{4i}}{(4i)!}|<\frac{1}{2^{p-2\lceil\log_2 (2N-2) \rceil}}|\frac{(\frac{m}{n})^{4i}}{(4i)!}|
\end{align*}
We approximate $\frac{x^{4i}}{(4i)!}(1-\frac{x^2}{(4i+1)(4i+2)})$ by multiplying the approximations $\frac{(\frac{m}{n})^{4i}}{(4i)!}$ and $(1-\frac{(\frac{m}{n})^2}{(4i+1)(4i+2)})$ (see Theorem~\ref{thm:mult}):
\begin{align*}
|\frac{x^{4i}}{(4i)!}(1-&\frac{x^2}{(4i+1)(4i+2)})-\frac{(\frac{m}{n})^{4i}}{(4i)!}(1-\frac{(\frac{m}{n})^2}{(4i+1)(4i+2)})|\\
~~~~~<&\frac{1}{2^{p-\max\{4,2\lceil\log_2(2N-2)\rceil\}-2}}|\frac{(\frac{m}{n})^{4i}}{(4i)!}(1-\frac{(\frac{m}{n})^2}{(4i+1)(4i+2)})|\\
~~~~~= &\frac{1}{2^{p-2\lceil\log_2(2N-2)\rceil-2}}|\frac{(\frac{m}{n})^{4i}}{(4i)!}(1-\frac{(\frac{m}{n})^2}{(4i+1)(4i+2)})|
\end{align*}
To obtain the last equality, we use the assumptions $\frac{(2N+1)!(2N-2)^2}{2^{2N}}>2^p$ and $N=2k+1$; we conclude that $N\geq 3$.

The terms $\frac{(\frac{m}{n})^{4i}}{(4i)!}(1-\frac{(\frac{m}{n})^2}{(4i+1)(4i+2)})$ are positive for $0\leq i\leq k$ (see Proposition~\ref{prop:cos_sign} in \ref{app:prop_lem}) and hence we can approximate the summation $\sum_{i=0}^{k}\frac{x^{4i}}{(4i)!}(1-\frac{x^2}{(4i+1)(4i+2)})$ without losing precision (see Theorem~\ref{thm:add}.\ref{thm:add_same_sign}):
\begin{align*}
|\sum_{i=0}^{k}\frac{x^{4i}}{(4i)!}(1-&\frac{x^2}{(4i+1)(4i+2)})-\sum_{i=0}^{k}\frac{(\frac{m}{n})^{4i}}{(4i)!}(1-\frac{(\frac{m}{n})^2}{(4i+1)(4i+2)})|<\\
~~~~~&\frac{1}{2^{p-2\lceil\log_2(2N-2) \rceil -2}}|\sum_{i=0}^{k}\frac{(\frac{m}{n})^{4i}}{(4i)!}(1-\frac{(\frac{m}{n})^2}{(4i+1)(4i+2)})|
\end{align*}
\end{proof}

Theorem~\ref{thm:base_cos} estimates loss of precision in $\cos(x)$ in the base interval independently of the argument $x$. To show that iterative computations can be avoided in the base interval, we calculate $|\frac{xf'(x)}{f(x)}|$ for $f(x)=\cos(x)$:
\begin{align}
|\frac{xf'(x)}{f(x)}|=|\frac{-x\cdot \sin(x)}{\cos(x)}|=|x\cdot \tan(x)|\label{eqn:pert_cos}
\end{align}
From Proposition~\ref{prop:cos_condition} we conclude that $|x\cdot \tan(x)|<\tan(1)$ for $x\in (-1,1)$. Thus, we can approximate $\cos(x)$ in the base interval in one pass.
 
\subsection{Extending Base Interval Approximations}\label{subsec:extend}
In Section~\ref{subsubsec:base_sin} and \ref{subsubsec:base_cos}, we discussed approximations for $\sin(x)$ and $\cos(x)$ in the base interval. In what follows, we show that range reduction identities can be used to extend these approximations to calculate sine and cosine for an argument $x$ represented by $(m,n,p)$ where $|\frac{m}{n}|\geq 1$.

In our calculations, we use a representation $(m',n',p)$ of $\pi$. This representation can be obtained based on our approximation for $\arctan(x)$ (see Section~\ref{subsec:arctan}) and the following identity:
\begin{align*}
\pi=4\arctan(1)
\end{align*}

\subsubsection{Sine}\label{subsubsec:extend_sin}

\begin{mytheorem}\label{thm:extend_sin}
Let $x$ be a real number represented by $(m,n,p)$ such that $|\frac{m}{n}|\geq 1$ and $(m',n',p)$ be a representation for $\pi$. Suppose $\frac{\om}{\on}=\frac{m}{n}+k\frac{m'}{n'}$ where $k\in\integer$ and $0<\frac{\om}{\on}<\frac{m'}{n'}$. The value of $\sin(x)$ can be approximated as follows:
\begin{enumerate}[i.]
\item \label{thm:extend_sin_1}
If $0<\frac{\om}{\on}<1$, then $\sin(x)$ can be represented by $(m_1,n_1,p-i_1)$ where:
\begin{align*}
i_1=&s_1+t_1\\
\frac{m_1}{n_1}=&\taylorsin{0}{N_1}{(\frac{\om}{\on})}
\end{align*}
\item \label{thm:extend_sin_2}
If $1\leq \frac{\om}{\on}<2$, then $\sin(x)$ can be represented by $(m_2,n_2,p-i_2)$ where:
\begin{align*}
i_2=&s_1+t_2+2\\
\frac{m_2}{n_2}=&2\bigg(\taylorsin{0}{N_1}{(\frac{\om}{2\on})}\bigg)\cdot\bigg(\taylorcos{0}{N_2}{(\frac{\om}{2\on})}\bigg)
\end{align*}
\item \label{thm:extend_sin_3}
If $2\leq \frac{\om}{\on}<\frac{m'}{n'}$, then $\sin(x)$ can be represented by $(m_3,n_3,p-i_3)$ where:
\begin{align*}
i_3=&s_1+s_2+t_3+6\\
\frac{m_3}{n_3}=&8\bigg(\taylorsin{0}{N_1}{(\frac{\om}{4\on})}\bigg)\cdot\bigg(\taylorcos{0}{N_2}{(\frac{\om}{4\on})}\bigg)\\
& \cdot\bigg(\taylorsin{0}{N_3}{(\frac{m'}{4n'}-\frac{\om}{4\on})}\bigg)\cdot\bigg(\taylorcos{0}{N_4}{(\frac{m'}{4n'}-\frac{\om}{4\on})}\bigg)
\end{align*}
\end{enumerate}
In the approximations above, $s_1,s_2,N_1,N_2,N_3,N_4\in \nat$ are the smallest natural numbers satisfying:
\begin{gather*}
s_1\geq \log_2(\frac{1+\frac{\min(|\frac{m}{n}|,k\frac{m'}{n'})}{\max(|\frac{m}{n}|,k\frac{m'}{n'})}}{1-\frac{\min(|\frac{m}{n}|,k\frac{m'}{n'})}{\max(|\frac{m}{n}|,k\frac{m'}{n'})}}) ~,~s_2\geq \log_2(\frac{1+\frac{\min(\frac{m'}{4n'},\frac{\om}{4\on})}{\max(\frac{m'}{4n'},\frac{\om}{4\on})}}{1-\frac{\min(\frac{m'}{4n'},\frac{\om}{4\on})}{\max(\frac{m'}{4n'},\frac{\om}{4\on})}})\\
(\frac{5}{6})(\frac{(2N_1+2)!(2N_1-1)^2}{2^{2N_1}})>2^{p-s_1} ,\frac{(2N_2+1)!(2N_2-2)^2}{2^{2N_2}}>2^{p-s_1}\\
(\frac{5}{6})(\frac{(2N_3+2)!(2N_3-1)^2}{2^{2N_3}})>2^{p-s_1-s_2},\frac{(2N_4+1)!(2N_4-2)^2}{2^{2N_4}}>2^{p-s_1-s_2}
\end{gather*}
and $t_1,t_2\in \nat$ are defined as follows:
\begin{align*}
t_1=&2\lceil\log_2(2N_1-1) \rceil+3\\
t_2=&\max(2\lceil\log_2(2N_1-1) \rceil+3,2\lceil\log_2(2N_2-2) \rceil+3)\\
t_3=&\max(2\lceil\log_2(2N_1-1) \rceil+3,2\lceil\log_2(2N_2-2) \rceil+3,\\
&\phantom{\max(} 2\lceil\log_2(2N_3-1) \rceil+3,2\lceil\log_2(2N_4-2) \rceil+3)
\end{align*}

\end{mytheorem}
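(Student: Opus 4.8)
The plan is to reduce the evaluation of $\sin(x)$ for $|\frac{m}{n}|\geq 1$ to a bounded number of sine and cosine evaluations whose arguments lie in the base interval $(-1,1)$, where Theorems~\ref{thm:base_sin} and~\ref{thm:base_cos} apply, and then to propagate the precision through the intermediate algebraic operations using Theorems~\ref{thm:unary_neg}, \ref{thm:mult}, \ref{thm:inverse} and~\ref{thm:add}. The reduction rests on the periodicity identity $\sin(x)=(-1)^k\sin(x+k\pi)$, which replaces $x$ by the reduced argument $y=x+k\pi$ lying in $(0,\pi)$.

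First I would treat the range reduction itself, since this is the genuinely new ingredient relative to the base-interval theorems. The multiple $k\pi$ is represented by $(km',n',p)$ with the same relative precision $p$ as $\pi$, because scaling by the integer constant $k$ leaves the relative error unchanged. As $\frac{m}{n}$ and $k\frac{m'}{n'}$ have opposite signs in the reduction, forming $\frac{\om}{\on}=\frac{m}{n}+k\frac{m'}{n'}$ is exactly an application of Theorem~\ref{thm:add}.\ref{thm:add_diff_sign}, so $\frac{\om}{\on}$ represents $y$ with precision $p-s_1$ for the $s_1$ given in the statement. The sign $(-1)^k$ is applied at the end via Theorem~\ref{thm:unary_neg}, which costs nothing, so it does not enter the precision count.

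Once $\frac{\om}{\on}$ represents $y$ at precision $p-s_1$ inside $(0,\frac{m'}{n'})$, I would split on its magnitude and apply a multiple-angle identity ensuring every trigonometric argument lands in $(-1,1)$. In case~\ref{thm:extend_sin_1} the argument is already in the base interval, so Theorem~\ref{thm:base_sin} at precision $p-s_1$ (with $N_1$) gives the claim after losing $t_1$ further units. In case~\ref{thm:extend_sin_2} I would use $\sin(y)=2\sin(\frac{y}{2})\cos(\frac{y}{2})$: halving $\frac{\om}{\on}$ is free, both half-arguments lie in $[\frac12,1)$, and Theorems~\ref{thm:base_sin} and~\ref{thm:base_cos} (with $N_1,N_2$) followed by one multiplication (Theorem~\ref{thm:mult}, costing $2$ units) produce the loss $t_2+2$. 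In case~\ref{thm:extend_sin_3} I would first verify the quadruple-angle identity $\sin(y)=8\sin(\frac{y}{4})\cos(\frac{y}{4})\sin(\frac{\pi-y}{4})\cos(\frac{\pi-y}{4})$, which follows from two uses of $\sin(2\theta)=2\sin(\theta)\cos(\theta)$ together with $\sin(\frac{\pi}{2}-\frac{y}{2})=\cos(\frac{y}{2})$; forming $\frac{m'}{4n'}-\frac{\om}{4\on}$ is a second opposite-sign addition costing $s_2$ units, after which all four quarter-arguments lie in the base interval, and four base evaluations (with $N_1,\dots,N_4$) together with three multiplications (costing $6$ units) give $i_3=s_1+s_2+t_3+6$.

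The main obstacle is the error bookkeeping in case~\ref{thm:extend_sin_3}: the four factors are known to different precisions --- two at $p-s_1$ and two at $p-s_1-s_2$ --- so I must confirm that the weakest factor still has precision at least $p-s_1-s_2-t_3$ (which uses $t_3=\max(\dots)$ and the fact that each base loss is at most $t_3$) and that chaining the three products costs exactly $6$ units. A secondary but essential check is that the reduced arguments really fall in $(-1,1)$: in particular $0<\frac{m'}{4n'}-\frac{\om}{4\on}<1$ via the hypothesis $\frac{\om}{\on}<\frac{m'}{n'}$, so that the hypotheses of Theorems~\ref{thm:base_sin} and~\ref{thm:base_cos} and the definitions of $N_1,\dots,N_4$ at the reduced precisions $p-s_1$ and $p-s_1-s_2$ are all legitimate.
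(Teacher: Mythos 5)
Your proposal is correct and takes essentially the same approach as the paper's own proof: the same range reduction $y=x+k\pi$ handled as an opposite-sign addition costing $s_1$ units (Theorem~\ref{thm:add}.\ref{thm:add_diff_sign}), the same three-case split using the double-angle and quadruple-angle identities, the same appeals to Theorems~\ref{thm:base_sin}, \ref{thm:base_cos} and \ref{thm:mult}, and the same precision totals $s_1+t_1$, $s_1+t_2+2$ and $s_1+s_2+t_3+6$. The one substantive difference is in your favour: you use the correct identity $\sin(x)=(-1)^k\sin(x+k\pi)$ and re-attach the sign at no cost via Theorem~\ref{thm:unary_neg}, whereas the paper's proof asserts $\sin(x)=\sin(x+k\pi)$, which fails for odd $k$; the factor $(-1)^k$ (present in the companion cosine result, Theorem~\ref{thm:extend_cos}) is indeed needed and should also appear in the expressions for $\frac{m_1}{n_1}$, $\frac{m_2}{n_2}$, $\frac{m_3}{n_3}$ in the theorem statement itself.
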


\begin{proof}
Since $|\frac{m}{n}|\geq 1$, we can choose $k\in \integer$ such that $\frac{\om}{\on}=\frac{m}{n}+k\frac{m'}{n'}$ and $0< \frac{\om}{\on}<\frac{m'}{n'}$. Suppose $y=x+k\pi$. We use the following identity to calculate $\sin(x)$:
\begin{align}
\sin(x)=\sin(x+k\pi)=\sin(y)\label{eqn:red_sin}
\end{align}
We approximate $y$ by $\frac{\om}{\on}=\frac{m}{n}+k\frac{m'}{n'}$ (see Theorem~\ref{thm:add}.\ref{thm:add_diff_sign}). By performing this approximation, we lose $s_1\in\nat$ units of precision where $s_1$ is the smallest number satisfying:
\begin{align*}
s_1\geq \log_2(\frac{1+\frac{\min(|\frac{m}{n}|,k\frac{m'}{n'})}{\max(|\frac{m}{n}|,k\frac{m'}{n'})}}{1-\frac{\min(|\frac{m}{n}|,k\frac{m'}{n'})}{\max(|\frac{m}{n}|,k\frac{m'}{n'})}})
\end{align*}
We consider three cases for calculating $\sin(y)$:
\begin{enumerate}
\item
Suppose $0<\frac{\om}{\on}<1$. We apply Theorem~\ref{thm:base_sin}. Thus, $t_1=2\lceil\log_2(2N_1-1) \rceil+3$ units of precision is lost in the approximation of $\sin(y)$. A total of $s_1+t_1$ units of precision is lost in the approximation of $\sin(x)$.
\item
Suppose $1\leq \frac{\om}{\on}<2$. We use the following identity to bring the argument within the base interval:
\begin{align}
\sin(y)&=2\sin(\frac{y}{2})\cos(\frac{y}{2})\label{eqn:sin_id1}
\end{align}
We approximate $\frac{y}{2}$ by $\frac{\om}{2\on}$. Since $\frac{1}{2}\leq \frac{\om}{2\on}<1$, we apply Theorem~\ref{thm:base_sin} and \ref{thm:base_cos} to approximate $\sin(\frac{y}{2})$ and $\cos(\frac{y}{2})$, respectively. Loss of precision in these calculations is as follows:
\begin{align*}
t_2=\max(2\lceil\log_2(2N_1-1) \rceil+3,2\lceil\log_2(2N_2-2) \rceil+3)
\end{align*}
The multiplication in equality~\eqref{eqn:sin_id1} reduces the precision by $2$ units (see Theorem~\ref{thm:mult}). A total of $s_1+t_2+2$ units of precision is lost in the approximation of $\sin(x)$.
\item
Suppose $2\leq \frac{\om}{\on}<\frac{m'}{n'}$. We use the following identity to bring the argument within the base interval. 
\begin{align}
\sin(y)&=2\sin(\frac{y}{2})\cos(\frac{y}{2})\nonumber\\
&=4\sin(\frac{y}{4})\cos(\frac{y}{4})\cos(\frac{y}{2})\nonumber\\
&=4\sin(\frac{y}{4})\cos(\frac{y}{4})\sin(\frac{\pi}{2}-\frac{y}{2})\nonumber\\
&=8\sin(\frac{y}{4})\cos(\frac{y}{4})\sin(\frac{\pi}{4}-\frac{y}{4})\cos(\frac{\pi}{4}-\frac{y}{4})\label{eqn:sin_id2}
\end{align}
We approximate $\frac{y}{4}$ and $\frac{\pi}{4}-\frac{y}{4}$ by $\frac{\om}{4\on}$ and $\frac{m'}{4n'}-\frac{\om}{4\on}$, respectively. We lose $s_2\in \nat$ units of precision in this approximation (see Theorem~\ref{thm:add}.\ref{thm:add_diff_sign}) where $s_2$ is the smallest number satisfying:
\begin{align*}
s_2\geq \log_2(\frac{1+\frac{\min(\frac{m'}{4n'},\frac{\om}{4\on})}{\max(\frac{m'}{4n'},\frac{\om}{4\on})}}{1-\frac{\min(\frac{m'}{4n'},\frac{\om}{4\on})}{\max(\frac{m'}{4n'},\frac{\om}{4\on})}})
\end{align*}
We apply Theorem~\ref{thm:base_sin} and \ref{thm:base_cos} to approximate $\sin(\frac{y}{4}), \cos(\frac{y}{4}), \sin(\frac{\pi}{4}-\frac{y}{4})$, and $\cos(\frac{\pi}{4}-\frac{y}{4})$. Lose of precision in these approximations can be calculated as follows:
\begin{align*}
t_3=\max(&2\lceil\log_2(2N_1-1) \rceil+3,2\lceil\log_2(2N_2-2) \rceil+3,\\
&2\lceil\log_2(2N_3-1) \rceil+3,2\lceil\log_2(2N_4-2) \rceil+3)
\end{align*}
The three multiplications in $\sin(\frac{y}{4})\cos(\frac{y}{4})\sin(\frac{\pi}{4}-\frac{y}{4})\cos(\frac{\pi}{4}-\frac{y}{4})$ reduce the precision by $6$ units (see Theorem~\ref{thm:mult}). A total of $s_1+s_2+t_3+6$ units of precision is lost in the approximation of $\sin(x)$.
\end{enumerate}
\end{proof}

\begin{algorithm}
\caption{Sine}\label{alg:sin}
\begin{algorithmic}[1]
\Require $\expr$ has the shape $\sin{x}$
\Procedure{Compute}{$\expr,p$}
\State Choose an odd $N$ such that $(\frac{5}{6})(\frac{(2N+2)!(N-1)^2}{2^{2N}})>2^{p+2\lceil\log_2(2N-1) \rceil+3}$
\State $p_x\gets p+2\lceil\log_2(2N-1) \rceil+3$
\Repeat \label{algline:sin_go}
\State $\frac{m}{n}\gets \Call{Compute}{x,p_x}$
\If{$-1<\frac{m}{n}<1$} \Comment{Theorem~\ref{thm:base_sin}}
	\State $\frac{m_0}{n_0}\gets \taylorsin{0}{N}{(\frac{m}{n})}$
	\State \Return $\frac{m_0}{n_0}$
\Else
	\State $\frac{m'}{n'}\gets \Call{Compute}{4\arctan(1),p_x}$
	\State Choose $k\in\integer$ such that $0<\frac{m}{n}+k\frac{m'}{n'}<\frac{m'}{n'}$
	\State $\frac{\om}{\on}\gets \frac{m}{n}+k\frac{m'}{n'}$
	\State Choose $s_1\in\nat$ such that $s_1\geq \log_2(\frac{1+\frac{\min(|\frac{m}{n}|,k\frac{m'}{n'})}{\max(|\frac{m}{n}|,k\frac{m'}{n'})}}{1-\frac{\min(|\frac{m}{n}|,k\frac{m'}{n'})}{\max(|\frac{m}{n}|,k\frac{m'}{n'})}})$
	\State Choose an odd $N_1$ such that $(\frac{5}{6})(\frac{(2N_1+2)!(2N_1-1)^2}{2^{2N_1}})>2^{p_x-s_1}$
	\State Choose an odd $N_2$ such that $\frac{(2N_2+1)!(2N_2-2)^2}{2^{2N_2}}>2^{p_x-s_1}$
	\If{$0<\frac{\om}{\on}<1$} \Comment{Theorem~\ref{thm:extend_sin}.\ref{thm:extend_sin_1}}
		\State $t_1\gets 2\lceil\log_2(2N_1-1) \rceil+3$
		\If{$p_x-s_1-t_1\geq p$}
			\State $\frac{m_1}{n_1}=\taylorsin{0}{N_1}{(\frac{\om}{\on})}$
			\State \Return $\frac{m_1}{n_1}$
		\Else
			\State $p_x\gets p_x+1$ \label{algline:sin_inc1}
		\EndIf	
	\ElsIf{$1\leq \frac{\om}{\on}<2$} \Comment{Theorem~\ref{thm:extend_sin}.\ref{thm:extend_sin_2}}
		\State $t_2\gets \max(2\lceil\log_2(2N_1-1) \rceil+3,2\lceil\log_2(2N_2-2) \rceil+3)$
		\If{$p_x-s_1-t_2-2\geq p$}
			\State $\frac{m_2}{n_2}=2\big(\taylorsin{0}{N_1}{(\frac{\om}{2\on})}\big)\big(\taylorcos{0}{N_2}{(\frac{\om}{2\on})}\big)$
			\State \Return $\frac{m_2}{n_2}$
		\Else
			\State $p_x\gets p_x+1$	\label{algline:sin_inc2}
		\EndIf
	\Else	\Comment{Theorem~\ref{thm:extend_sin}.\ref{thm:extend_sin_3}}
		\State \hspace{-1mm} Choose an odd $N_3$ such that \hspace{-1mm} $(\frac{5}{6})(\frac{(2N_3+2)!(2N_3-1)^2}{2^{2N_3}})>2^{p_x-s_1-s_2}$
		\State Choose an odd $N_4$ such that $\frac{(2N_4+1)!(2N_4-2)^2}{2^{2N_4}}>2^{p_x-s_1-s_2}$
		\State \begin{varwidth}[t]{\linewidth}$t_3\gets \max(2\lceil\log_2(2N_1-1) \rceil+3, 2\lceil\log_2(2N_2-2) \rceil+3,$\par
		\hskip\algorithmicindent $\phantom{~~\max (}2\lceil\log_2(2N_3-1) \rceil+3, 2\lceil\log_2(2N_4-2) \rceil+3)$
		\end{varwidth}
		\algstore{sinalg}
\end{algorithmic}
\end{algorithm}

\begin{algorithm}
\ContinuedFloat
\caption{Sine (Continued)}
\begin{algorithmic}[1]
\algrestore{sinalg}
		\If{$p_x-s_1-s_2-t_3-6\geq p$}
			\State $\frac{m_3}{n_3}=8\big(\taylorsin{0}{N_1}{(\frac{\om}{4\on})}\big)\big(\taylorcos{0}{N_2}{(\frac{\om}{4\on})}\big)$\par
		\hskip\algorithmicindent $~~~~~~~~\phantom{\frac{m_3}{n_3}=8}\big(\taylorsin{0}{N_3}{(\frac{m'}{4n'}- \frac{\om}{4\on})}\big)\big(\taylorcos{0}{N_4}{(\frac{m'}{4n'}-\frac{\om}{4\on})}\big)$
			\State \Return $\frac{m_3}{n_3}$
		\Else
			\State $p_x\gets p_x+1$ \label{algline:sin_inc3}
		\EndIf
	\EndIf
\EndIf
\Until{$\true$}
\EndProcedure
\end{algorithmic}
\end{algorithm}

Algorithm~\ref{alg:sin} applies Theorem~\ref{thm:base_sin} and \ref{thm:extend_sin} to approximate $\sin(x)$ with arbitrary precision. In this algorithm, initially, we calculate $x$ with a precision that is adequate for calculations in the base interval. However, if the obtained approximation is outside the base interval, we use the half-angle formula or add rational multiples of $\pi$ to the argument (see equality~\eqref{eqn:red_sin} and \eqref{eqn:sin_id2}). 

Observe that an arbitrary amount of precision can be lost in the approximation of $x+k\pi$ and $\frac{\pi}{4}-\frac{y}{4}$ when $x\approx -k\pi$. Hence, iterative computations might be necessary in the approximation (see Line~\ref{algline:sin_go},\ref{algline:sin_inc1},\ref{algline:sin_inc2},\ref{algline:sin_inc3} in Algorithm~\ref{alg:sin}). To show that this is essential for sine, we reconsider the perturbation analysis in equality~\eqref{eqn:pert_sin} for $f(x)=\sin(x)$. The quantity $|x\cdot\cot(x)|$ can be arbitrary large for $x\approx -k\pi$. Thus, iterative computations are essential for approximating $\sin(x)$.

\subsubsection{Cosine}\label{subsubsec:extend_cos}
\begin{mytheorem}\label{thm:extend_cos}
Let $x$ be a real number represented by $(m,n,p)$ such that $|\frac{m}{n}|\geq 1$ and $(m',n',p)$ be a representation for $\pi$. Suppose $\frac{\om}{\on}=\frac{m}{n}+(2k+1)\frac{m'}{2n'}$ where $k\in\integer$ and $0<\frac{\om}{\on}<\frac{m'}{n'}$. The value of $\cos(x)$ can be approximated as follows:
\begin{enumerate}[i.]
\item \label{thm:extend_cos_1}
If $0<\frac{\om}{\on}<1$, then $\cos(x)$ can be represented by $(m_1,n_1,p-i_1)$ where:
\begin{align*}
i_1=&s_1+t_1\\
\frac{m_1}{n_1}=&(-1)^k\taylorsin{0}{N_1}{(\frac{\om}{\on})}
\end{align*}
\item \label{thm:extend_cos_2}
If $1\leq \frac{\om}{\on}<2$, then $\cos(x)$ can be represented by $(m_2,n_2,p-i_2)$ where:
\begin{align*}
i_2=&s_1+t_2+2\\
\frac{m_2}{n_2}=&2\cdot(-1)^k\bigg( \taylorsin{0}{N_1}{(\frac{\om}{2\on})}\bigg)\cdot\bigg(\taylorcos{0}{N_2}{(\frac{\om}{2\on})}\bigg)
\end{align*}
\item \label{thm:extend_cos_3}
If $2\leq \frac{\om}{\on}<\frac{m'}{n'}$, then $\cos(x)$ can be represented by $(m_3,n_3,p-i_3)$ where:
\begin{align*}
i_3=&s_1+s_2+t_3+6\\
\frac{m_3}{n_3}=&8\cdot (-1)^k \bigg(\taylorsin{0}{N_1}{(\frac{\om}{4\on})}\bigg)\cdot\bigg(\taylorcos{0}{N_2}{(\frac{\om}{4\on})}\bigg)\\
& \cdot\bigg(\taylorsin{0}{N_3}{(\frac{m'}{4n'}-\frac{\om}{4\on})}\bigg)\cdot\bigg(\taylorcos{0}{N_4}{(\frac{m'}{4n'}-\frac{\om}{4\on})}\bigg)
\end{align*}
\end{enumerate}
In the approximations above, $s_1,s_2,N_1,N_2,N_3,N_4\in\nat$ are the smallest natural numbers satisfying:
\begin{align*}
s_1\geq \log_2(\frac{1+\frac{\min(|\frac{m}{n}|,(2k+1)\frac{m'}{2n'})}{\max(|\frac{m}{n}|,(2k+1)\frac{m'}{2n'})}}{1-\frac{\min(|\frac{m}{n}|,(2k+1)\frac{m'}{2n'})}{\max(|\frac{m}{n}|,(2k+1)\frac{m'}{2n'})}})~&,~s_2\geq \log_2(\frac{1+\frac{\min(\frac{m'}{4n'},\frac{\om}{4\on})}{\max(\frac{m'}{4n'},\frac{\om}{4\on})}}{1-\frac{\min(\frac{m'}{4n'},\frac{\om}{4\on})}{\max(\frac{m'}{4n'},\frac{\om}{4\on})}})\\
(\frac{5}{6})(\frac{(2N_1+2)!(2N_1-1)^2}{2^{2N_1}})>2^{p-s_1} ~&,~\frac{(2N_2+1)!(2N_2-2)^2}{2^{2N_2}}>2^{p-s_1}\\
(\frac{5}{6})(\frac{(2N_3+2)!(2N_3-1)^2}{2^{2N_3}})>2^{p-s_1-s_2}~&,~\frac{(2N_4+1)!(2N_4-2)^2}{2^{2N_4}}>2^{p-s_1-s_2}
\end{align*}
and $t_1,t_2\in \nat$ are defined as follows:
\begin{align*}
t_1=&2\lceil\log_2(2N_1-1) \rceil+3\\
t_2=&\max(2\lceil\log_2(2N_1-1) \rceil+3,2\lceil\log_2(2N_2-2) \rceil+3)\\
t_3=&\max(2\lceil\log_2(2N_1-1) \rceil+3,2\lceil\log_2(2N_2-2) \rceil+3,\\
&\phantom{\max(} 2\lceil\log_2(2N_3-1) \rceil+3,2\lceil\log_2(2N_4-2) \rceil+3)
\end{align*}
\end{mytheorem}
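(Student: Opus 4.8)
The plan is to reduce the computation of $\cos(x)$ entirely to the sine computation of Theorem~\ref{thm:extend_sin} by means of a phase shift. The key identity is
\[
\cos(x)=\cos\big(y-(2k+1)\tfrac{\pi}{2}\big)=(-1)^k\sin(y),\quad\text{where } y=x+(2k+1)\tfrac{\pi}{2},
\]
which follows from the cosine addition formula together with $\cos\big((2k+1)\tfrac{\pi}{2}\big)=0$ and $\sin\big((2k+1)\tfrac{\pi}{2}\big)=(-1)^k$. Since $|\frac{m}{n}|\geq 1$, I would first choose $k\in\integer$ so that the shifted argument $\frac{\om}{\on}=\frac{m}{n}+(2k+1)\frac{m'}{2n'}$ satisfies $0<\frac{\om}{\on}<\frac{m'}{n'}$, i.e.\ lands in $(0,\pi)$; this guarantees that the three sub-cases of the statement are exhaustive. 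The factor $(-1)^k$ that distinguishes this theorem from Theorem~\ref{thm:extend_sin} is produced precisely here and, being a sign, costs no precision (cf.\ Theorem~\ref{thm:unary_neg}).

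Next I would track the precision lost in approximating $y$ by $\frac{\om}{\on}$. As $(m',n',p)$ represents $\pi$, the scaled value $(2k+1)\frac{m'}{2n'}$ represents $(2k+1)\frac{\pi}{2}$ with the same precision $p$, since multiplication by a rational constant is exact. Adding this to $\frac{m}{n}$ is an addition of two numbers of equal precision but possibly opposite signs, so by Theorem~\ref{thm:add}.\ref{thm:add_diff_sign} we lose $s_1$ units, where $s_1$ is exactly the smallest natural number meeting the logarithmic bound in the statement (with $(2k+1)\frac{m'}{2n'}$ in the role of the second summand). Thus $y$, and hence $\cos(x)=(-1)^k\sin(y)$, becomes available from an approximation $\frac{\om}{\on}$ of $y$ with precision $p-s_1$.

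From this point the argument is a verbatim transcription of the proof of Theorem~\ref{thm:extend_sin}, split on the location of $\frac{\om}{\on}$. For $0<\frac{\om}{\on}<1$ I apply Theorem~\ref{thm:base_sin} directly, losing $t_1=2\lceil\log_2(2N_1-1)\rceil+3$ further units, giving $i_1=s_1+t_1$. For $1\leq\frac{\om}{\on}<2$ I halve via $\sin(y)=2\sin(\frac{y}{2})\cos(\frac{y}{2})$, apply Theorems~\ref{thm:base_sin} and \ref{thm:base_cos} to the base-interval argument $\frac{\om}{2\on}$, and add $2$ units for the multiplication (Theorem~\ref{thm:mult}), giving $i_2=s_1+t_2+2$. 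For $2\leq\frac{\om}{\on}<\frac{m'}{n'}$ I apply the fourfold identity $\sin(y)=8\sin(\frac{y}{4})\cos(\frac{y}{4})\sin(\frac{\pi}{4}-\frac{y}{4})\cos(\frac{\pi}{4}-\frac{y}{4})$, which requires approximating $\frac{\pi}{4}-\frac{y}{4}$ (a further $s_2$ units, again by Theorem~\ref{thm:add}.\ref{thm:add_diff_sign}) and three multiplications (a further $6$ units), giving $i_3=s_1+s_2+t_3+6$. The choices of $N_1,\dots,N_4$ are inherited from Theorems~\ref{thm:base_sin} and \ref{thm:base_cos} at the reduced precisions $p-s_1$ and $p-s_1-s_2$.

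The only genuinely new content, and hence the step to treat with care, is the phase-shift identity and the resulting sign: one must confirm both that $\sin\big((2k+1)\tfrac{\pi}{2}\big)=(-1)^k$ (so the prefactor is uniformly $(-1)^k$ rather than a sign chosen case-by-case) and that the reduced argument indeed falls in $(0,\pi)$. Once these are in place, all the delicate estimation — Taylor remainder bounds, the term-pairing trick for the alternating series, and the componentwise precision accounting — is already encapsulated in Theorems~\ref{thm:base_sin} and \ref{thm:base_cos}, so I anticipate no further obstacle.
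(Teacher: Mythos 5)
Your proposal is correct and follows essentially the same route as the paper's proof: the phase-shift identity $\cos(x)=(-1)^k\sin\big(x+(2k+1)\tfrac{\pi}{2}\big)$, the loss of $s_1$ units via Theorem~\ref{thm:add}.\ref{thm:add_diff_sign} when forming $\frac{\om}{\on}$, reuse of the case analysis from Theorem~\ref{thm:extend_sin} for the three ranges of $\frac{\om}{\on}$, and the observation (Theorem~\ref{thm:unary_neg}) that the sign $(-1)^k$ costs no precision. The only cosmetic difference is that the paper invokes Theorem~\ref{thm:extend_sin} as a black box for $\sin(y)$ while you replay its three cases explicitly, which amounts to the same argument.
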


\begin{proof}
Since $|\frac{m}{n}|\geq 1$, we can choose $k\in \integer$ such that $\frac{\om}{\on}=\frac{m}{n}+(2k+1)\frac{m'}{2n'}$ and $0< \frac{\om}{\on}<\frac{m'}{n'}$. Suppose $y=x+(2k+1)\frac{\pi}{2}$. We use the following identity to calculate $\cos(x)$:
\begin{align*}
\cos(x)=(-1)^k\sin(x+(2k+1)\frac{\pi}{2})=(-1)^k\sin(y)
\end{align*}
We approximate $y$ by $\frac{\om}{\on}=\frac{m}{n}+(2k+1)\frac{m'}{2n'}$. From Theorem~\ref{thm:add}.\ref{thm:add_diff_sign}, we can conclude that $s_1\in \nat$ units of precision will be lost in this approximation where $s_1$ is the smallest natural number satisfying:
\begin{align*}
s_1\geq \log_2(\frac{1+\frac{\min(|\frac{m}{n}|,(2k+1)\frac{m'}{2n'})}{\max(|\frac{m}{n}|,(2k+1)\frac{m'}{2n'})}}{1-\frac{\min(|\frac{m}{n}|,(2k+1)\frac{m'}{2n'})}{\max(|\frac{m}{n}|,(2k+1)\frac{m'}{2n'})}})
\end{align*}
We apply Theorem~\ref{thm:extend_sin} to approximate $\sin(y)$ and determine the loss of precision in the approximation. From Theorem~\ref{thm:unary_neg} we can conclude that the factor $(-1)^k$ does not influence the precision of the approximation. 
\end{proof}

\begin{algorithm}
\caption{Cosine}\label{alg:cos}
\begin{algorithmic}[1]
\Require $\expr$ has the shape $\cos{x}$
\Procedure{Compute}{$\expr,p$}
\State Choose an odd $N$ such that $\frac{(2N+1)!(2N-2)^2}{2^{2N}}>2^{p+2\lceil\log_2(2N-2) \rceil +2}$
\State $p_x\gets p+2\lceil\log_2(2N-2) \rceil+2$
\Repeat \label{algline:cos_go}
\State $\frac{m}{n}\gets \Call{Compute}{x,p_x}$
\If{$-1<\frac{m}{n}<1$} \Comment{Theorem~\ref{thm:base_cos}}
	\State $\frac{m_0}{n_0}=\taylorcos{0}{N}{(\frac{m}{n})}$
	\State \Return $\frac{m_0}{n_0}$
\Else
	\State $\frac{m'}{n'}\gets \Call{Compute}{4\arctan(1),p_x}$
	\State Choose $k\in\integer$ such that $0<\frac{m}{n}+(2k+1)\frac{m'}{2n'}<\frac{m'}{n'}$
	\State $\frac{\om}{\on}\gets \frac{m}{n}+(2k+1)\frac{m'}{2n'}$
	\State Choose $s_1\in\nat$ such that $s_1\geq \log_2(\frac{1+\frac{\min(|\frac{m}{n}|,(2k+1)\frac{m'}{2n'})}{\max(|\frac{m}{n}|,(2k+1)\frac{m'}{2n'})}}{1-\frac{\min(|\frac{m}{n}|,(2k+1)\frac{m'}{2n'})}{\max(|\frac{m}{n}|,(2k+1)\frac{m'}{2n'})}})$
	\State Choose an odd $N_1$ such that $(\frac{5}{6})(\frac{(2N_1+2)!(2N_1-1)^2}{2^{2N_1}})>2^{p_x-s_1}$
	\State Choose an odd $N_2$ such that $\frac{(2N_2+1)!(2N_2-2)^2}{2^{2N_2}}>2^{p_x-s_1}$
	\If{$0<\frac{\om}{\on}<1$} \Comment{Theorem~\ref{thm:extend_cos}.\ref{thm:extend_cos_1}}
		\State $t_1\gets 2\lceil\log_2(2N_1-1) \rceil +3$
		\If{$p_x-s_1-t_1\geq p$}
			\State $\frac{m_1}{n_1}=(-1)^k\taylorsin{0}{N_1}{(\frac{\om}{\on})}$
			\State \Return $\frac{m_1}{n_1}$
		\Else
			\State $p_x\gets p_x +1$ \label{algline:cos_inc1}
		\EndIf
	\ElsIf{$1\leq \frac{\om}{\on}<2$} \Comment{Theorem~\ref{thm:extend_cos}.\ref{thm:extend_cos_2}}
		\State $t_2\gets \max(2\lceil\log_2(2N_1-1) \rceil+3,2\lceil\log_2(2N_2-2) \rceil+3)$
		\If{$p_x-s_1-t_2-2\geq p$}
			\State $\frac{m_2}{n_2}=2\cdot (-1)^k\big(\taylorsin{0}{N_1}{(\frac{\om}{2\on})} \big)\big(\taylorcos{0}{N_2}{(\frac{\om}{2\on})}\big)$
			\State \Return $\frac{m_2}{n_2}$
		\Else
			\State $p_x\gets p_x +1$ \label{algline:cos_inc2}
		\EndIf	
	\Else \Comment{Theorem~\ref{thm:extend_cos}.\ref{thm:extend_cos_3}}
		\State Choose an odd $N_3$ such that $(\frac{5}{6})(\frac{(2N_3+2)!(2N_3-1)^2}{2^{2N_3}})>2^{p-s_1-s_2}$
		\State Choose an odd $N_4$ such that $\frac{(2N_4+1)!(2N_4-2)^2}{2^{2N_4}}>2^{p-s_1-s_2}$
		\State \begin{varwidth}[t]{\linewidth} $t_3\gets \max(2\lceil\log_2(2N_1-1) \rceil+3,2\lceil\log_2(2N_2-2) \rceil+3,$\par
		 \hskip\algorithmicindent$\phantom{~~\max (} 2\lceil\log_2(2N_3-1) \rceil+3,2\lceil\log_2(2N_4-2) \rceil+3)$
		 \end{varwidth}
		 \algstore{cosalg}
\end{algorithmic}
\end{algorithm}

\begin{algorithm}
\ContinuedFloat
\caption{Cosine (Continued)}
\begin{algorithmic}[1]
\algrestore{cosalg}
		\If{$p_x-s_1-s_2-t_3\geq p$}
			\State \begin{varwidth}[t]{\linewidth}$\frac{m_3}{n_3}=8\cdot (-1)^k \big(\taylorsin{0}{N_1}{(\frac{\om}{4\on})}\big)\big(\taylorcos{0}{N_2}{(\frac{\om}{4\on})}\big)$
			\hskip\algorithmicindent$\phantom{\frac{m_3}{n_3}=}\big(\taylorsin{0}{N_3}{(\frac{m'}{4n'}-\frac{\om}{4\on})}\big)\big(\taylorcos{0}{N_4}{(\frac{m'}{4n'}-\frac{\om}{4\on})}\big)$
			\end{varwidth}
			\State \Return $\frac{m_3}{n_3}$
		\Else
			\State $p_x\gets p_x +1$ \label{algline:cos_inc3}
		\EndIf
	\EndIf
\EndIf
\Until{$\true$}
\EndProcedure
\end{algorithmic}
\end{algorithm}

Algorithm~\ref{alg:cos} applies Theorem~\ref{thm:base_cos} and \ref{thm:extend_cos} to approximate $\cos(x)$ with a desired precision. The algorithm first calculates $x$ with a precision that is sufficient for approximating $\cos(x)$ in the base interval. Similar to $\sin(x)$, we use range reduction identities to calculate $\cos(x)$ for an arbitrary $x$. 

Observe that a significant amount of precision might be lost in the approximation when $x\approx \frac{-(2k+1)}{2}\pi$. Thus, iterative computations might be necessary in our approximation (see Line~\ref{algline:cos_go},\ref{algline:cos_inc1},\ref{algline:cos_inc2},\ref{algline:cos_inc3} in Algorithm~\ref{alg:cos}). To show that these recomputations are essential, we reconsider the perturbation analysis of equality~\eqref{eqn:pert_cos}. The quantity $|x\cdot \tan(x)|$ can be arbitrary large for $x\approx \frac{-(2k+1)}{2}\pi$ and hence iterative computations are unavoidable for $\cos(x)$.

\section{Related Work}\label{sec:related_work}
An implementation for a bottom-up approach to exact real arithmetic is proposed in \cite{M01}. For a given expression, the inputs are computed with predefined precisions and a bottom-up scheme is used to determine the guaranteed precision of the output. Iterative computations are required if the obtained precision is not adequate. A formalization of a top-down approach in a theorem prover is proposed in \cite{O08}. The author first provides a definition for a metric space based on a ball relation. Afterwards, real numbers are defined as the completion of the metric space $\mathbb{Q}$. Rational operations are lifted to approximate operations on real numbers. This approach is optimized in \cite{KS11}.

Two closely-related top-down approaches based on absolute errors have been studied in \cite{GL00,M05}. These approaches mainly differ in their approximations of the transcendental functions. In \cite{GL00} the authors introduce a general way for calculating with Taylor expansions and apply this method to approximate the transcendental functions. In \cite{M05}, the approximations of the transcendental functions are treated separately and in a more ad-hoc way. In contrast, our approach is based on relative errors. We provide detailed proofs of correctness for each operation and use perturbation analysis to identify essential recomputations.

In \cite{CNR11} the authors propose a layered framework for  computations with real numbers. The lowest layer is an implementation of floating point arithmetic. In the second layer, arithmetic operations are approximated using polynomial models. The highest layer supports more advanced features such as differential operations. Proof of correctness in Coq and an implementation based on \cite{CNR11} are also available. In this article, we have focused on approximating specific operations, whereas in \cite{CNR11} polynomial models are discussed in an abstract way without concrete examples from well-known arithmetic operations. 

The approach introduced in \cite{P98} is based on linear fractional transformations (LFTs). Computations are encoded as trees of LFTs; various operations are defined to extract the result of a computation from the corresponding tree. However, this approach does not specify a top-down scheme to relate the desired precision in the output and the required operations on the expression tree. Expression trees are evaluated using lazy evaluation; computations terminate when adequate information is available in the root of a tree. 

A symbolic approach to exact real arithmetic has been proposed in \cite{K00}. The author uses infinite binary sequences in the golden ratio base to represent real numbers. To calculate an expression, first the symbolic techniques available in Maple are applied to obtain a simplified expression. Additional Maple procedures are implemented by the author to extract binary sequences from simplified expressions. Performing operations on binary sequences is also possible. However, choosing a suitable balance between symbolic computations and direct manipulation of binary sequences depends on the given expression. As indicated in \cite{K00}, using this approach to its full potentials requires expertise in Maple. Moreover, the procedure might need adaptations for each problem.

\section{Conclusion}
\label{sec:conclusion}
In this article, we proposed a simple representation for real numbers and discussed a top-down approach for approximating various arithmetic operations with arbitrary precision. The focus was on:
\begin{itemize}
\item
providing complete algorithms and proofs of correctness for the approximations, and
\item
perturbation analysis to identify essential iterative computations.
\end{itemize}

Existing exact real arithmetic approaches have explored different representations for real numbers; approximations for algebraic operations and transcendental functions have also been proposed based on these representations. As far as we can see, proofs of correctness for existing approaches are restricted to basic operations. Moreover, no formal reasoning is provided to prove the necessity of iterative computations.

We envisage various extensions of the presented approach. From a practical point of view, some optimizations are essential. For example, the coefficients $m$ and $n$ in the representation $(m,n,p)$ can grow rapidly during computations. Thus, space efficiency is a relevant concern. One can consider an alternative representation in which large coefficients are represented in a more efficient way. Moreover, the computational efficiency of the transcendental functions can be improved by reducing the amount of required computations (i.e, number of rectangles in Riemann sums, number of terms in Taylor expansions) to guarantee the desired precision. 

As discussed in Section~\ref{subsec:add}, in certain computational problems, computing the expressions as they are would lead to loss of precision, whereas rewriting the expressions would allow us to compute them in one pass. Our top-down approach can be extended with a set of rewrite rules that transform problematic expressions into expressions that can be calculated in one pass. 

\paragraph{Acknowledgement}
This research was supported by the Dutch national program
COMMIT and carried out as part of the Allegio project.

\bibliography{real_numbers}

\appendix

\section{Useful Propositions \& Lemmas}\label{app:prop_lem}
\begin{proposition}\label{prop:prec2}
For any $p\in \nat$ the following inequalities hold:
\begin{align}
(1+\frac{1}{2^{p}})^2\leq 1+\frac{1}{2^{p-2}} \label{eqn:posprec2}\\
1-\frac{1}{2^{p-2}}\leq (1-\frac{1}{2^{p}})^2 \label{eqn:negprec2}
\end{align}
\end{proposition}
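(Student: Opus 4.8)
These are elementary inequalities about the quantity $\frac{1}{2^p}$, so the core strategy is simple algebraic manipulation. Let me denote $t = \frac{1}{2^p}$ for convenience. Then $\frac{1}{2^{p-2}} = 4t$, and the two claims become $(1+t)^2 \le 1 + 4t$ and $1 - 4t \le (1-t)^2$.

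**For the first inequality** (\ref{eqn:posprec2}), I would expand $(1+t)^2 = 1 + 2t + t^2$ and subtract from the right-hand side. The inequality $(1+t)^2 \le 1 + 4t$ is equivalent to $1 + 2t + t^2 \le 1 + 4t$, i.e. $t^2 \le 2t$, i.e. $t^2 - 2t \le 0$, i.e. $t(t-2) \le 0$. Since $t = \frac{1}{2^p}$ with $p \in \nat$, we have $0 < t \le 1 < 2$, so $t > 0$ and $t - 2 < 0$, giving $t(t-2) < 0 \le 0$. This establishes the claim.

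**For the second inequality** (\ref{eqn:negprec2}), I would proceed symmetrically. Expanding $(1-t)^2 = 1 - 2t + t^2$, the claim $1 - 4t \le (1-t)^2$ becomes $1 - 4t \le 1 - 2t + t^2$, i.e. $-4t \le -2t + t^2$, i.e. $0 \le 2t + t^2 = t(t+2)$. Since $t > 0$ and $t + 2 > 0$, the product is strictly positive, so the inequality holds.

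**I do not anticipate any genuine obstacle here**, since both inequalities reduce to sign checks on simple quadratic expressions in $t$, and the constraint $p \in \nat$ (hence $0 < t \le 1$) makes all the relevant factors have obvious signs. The only point requiring minor care is confirming the range of $t$: because $p \ge 0$, we have $t \le 1$, which comfortably guarantees $t < 2$ for the first inequality; the second inequality in fact holds for all $t > 0$ without needing the upper bound. Thus the whole proof is a matter of writing out the two expansions cleanly and noting the sign of each factor.
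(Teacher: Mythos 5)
Your proof is correct and takes essentially the same route as the paper: both expand the squares and reduce to elementary term-by-term comparisons, with your inequality $t^2 \leq 2t$ being exactly the paper's bound $\frac{1}{2^{2p}} \leq \frac{1}{2^{p-1}}$ written in the substitution $t = \frac{1}{2^p}$. The substitution is purely notational and changes nothing of substance.
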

\begin{proof}
For inequality~\eqref{eqn:posprec2} we can write:
\begin{align*}
(1+\frac{1}{2^{p}})^2=1+\frac{1}{2^{2p}}+\frac{1}{2^{p-1}}\leq 1+\frac{1}{2^{p-1}}+\frac{1}{2^{p-1}}=1+\frac{1}{2^{p-2}}
\end{align*}
Similarly for inequality~\eqref{eqn:negprec2} we have:
\begin{align*}
(1-\frac{1}{2^{p}})^2=1+\frac{1}{2^{2p}}-\frac{1}{2^{p-1}}\geq 1-\frac{1}{2^{p-1}}>1-\frac{1}{2^{p-2}}
\end{align*}

\end{proof}

\begin{proposition}\label{prop:prec1}
For any $p\in \nat^{+}$ the following inequalities hold:
\begin{align}
\frac{2^{p}}{2^{p}-1}\leq 1+\frac{1}{2^{p-1}}\label{eqn:posprec1}\\
1-\frac{1}{2^{p-1}}\leq \frac{2^{p}}{2^{p}+1}\label{eqn:negprec1}
\end{align}
\end{proposition}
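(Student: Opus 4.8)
The plan is to clear denominators in each inequality and reduce the claim to an elementary comparison that follows directly from the hypothesis $p\geq 1$. First I would record that for every $p\in\nat^+$ the quantities $2^{p}-1$, $2^{p}+1$, $2^{p-1}$ and $2^{p}$ are all strictly positive; this is exactly where the assumption $p\in\nat^+$ is needed, since for $p=0$ the denominator $2^{p}-1$ would vanish and the statement would be meaningless. Because all denominators are positive, I may cross-multiply in both inequalities without reversing the direction of the inequality.

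For inequality~\eqref{eqn:posprec1} I would rewrite the right-hand side as $1+\frac{1}{2^{p-1}}=\frac{2^{p-1}+1}{2^{p-1}}$ and cross-multiply against $\frac{2^{p}}{2^{p}-1}$, which reduces the claim to
\begin{align*}
2^{p}\cdot 2^{p-1}\leq (2^{p-1}+1)(2^{p}-1).
\end{align*}
Expanding the right-hand side yields $2^{2p-1}+2^{p-1}-1$, so the inequality is equivalent to $0\leq 2^{p-1}-1$. This holds precisely because $p\geq 1$ forces $2^{p-1}\geq 1$, and this is the one place in the whole argument where the hypothesis is genuinely used.

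For inequality~\eqref{eqn:negprec1} I would write the left-hand side as $1-\frac{1}{2^{p-1}}=\frac{2^{p-1}-1}{2^{p-1}}$ and cross-multiply against $\frac{2^{p}}{2^{p}+1}$, reducing the claim to
\begin{align*}
(2^{p-1}-1)(2^{p}+1)\leq 2^{p}\cdot 2^{p-1}.
\end{align*}
Expanding the left-hand side gives $2^{2p-1}-2^{p-1}-1$, so the inequality collapses to $-2^{p-1}-1\leq 0$, which is trivially true for every $p$.

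Since both reductions are routine algebra, there is no substantial obstacle in this proposition. The only step requiring any care is the preliminary observation that all four denominators are strictly positive, which both justifies cross-multiplication and isolates the role of the hypothesis $p\in\nat^+$: the strict inequality $2^{p-1}\geq 1$ needed in the first part is exactly what fails for $p=0$, whereas the second part would in fact hold even without this restriction.
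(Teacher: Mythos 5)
Your proof is correct and takes essentially the same elementary route as the paper: the paper rewrites $\frac{2^{p}}{2^{p}-1}=1+\frac{1}{2^{p}-1}$ and $\frac{2^{p}}{2^{p}+1}=1-\frac{1}{2^{p}+1}$ and compares $\frac{1}{2^{p}\mp 1}$ with $\frac{1}{2^{p-1}}$, which reduces to exactly the same facts ($2^{p-1}\geq 1$ for the first inequality, trivially true for the second) that your cross-multiplication isolates. The difference is purely mechanical (clearing denominators and expanding versus the paper's one-line rewriting), so there is nothing of substance to distinguish the two arguments.
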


\begin{proof}
Since $p\in \nat^+$ we have $2^p\geq 2$. For inequality~\eqref{eqn:posprec1} we have:
\begin{align*}
\frac{2^{p}}{2^{p}-1}=\frac{2^{p}-1+1}{2^{p}-1}=1+\frac{1}{2^{p}-1}\leq 1+\frac{1}{2^{p-1}}
\end{align*}
Similarly for inequality~\eqref{eqn:negprec1} we can write:
\begin{align*}
\frac{2^{p}}{2^{p}+1}=\frac{2^{p}+1-1}{2^{p}+1}=1-\frac{1}{2^{p}+1}\geq 1-\frac{1}{2^{p-1}}
\end{align*}

\end{proof}

\begin{proposition}\label{prop:ln_bounds}
For any $0<y<1$ the following inequality holds:
\begin{align}
\log_2(1+y)\leq \frac{y}{\ln(2)}
\end{align}
\end{proposition}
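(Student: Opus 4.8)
The plan is to reduce this base-$2$ statement to the familiar natural-logarithm inequality $\ln(1+y)\leq y$. Since $\log_2(1+y)=\frac{\ln(1+y)}{\ln(2)}$ and $\ln(2)>0$, multiplying through by $\ln(2)$ shows that the claimed inequality $\log_2(1+y)\leq \frac{y}{\ln(2)}$ is equivalent to
\begin{align*}
\ln(1+y)\leq y.
\end{align*}
Thus it suffices to prove this last inequality on the range $0<y<1$; in fact the argument below gives it for all $y>0$, so the hypothesis $0<y<1$ is more than we need.

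To establish $\ln(1+y)\leq y$, I would use the integral representation $\ln(1+y)=\int_0^y \frac{1}{1+t}\,dt$. For every $t\geq 0$ we have $1+t\geq 1$, so $\frac{1}{1+t}\leq 1$. Integrating this pointwise bound over $[0,y]$ with $y>0$ gives
\begin{align*}
\ln(1+y)=\int_0^y \frac{1}{1+t}\,dt \leq \int_0^y 1\,dt = y,
\end{align*}
which is exactly the desired inequality. An equally short alternative is a monotonicity argument: set $g(y)=y-\ln(1+y)$, note $g(0)=0$, and compute $g'(y)=1-\frac{1}{1+y}=\frac{y}{1+y}\geq 0$ for $y\geq 0$, so $g$ is nondecreasing on $[0,1)$ and hence $g(y)\geq g(0)=0$.

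There is no genuine obstacle here: the entire content is the scalar inequality $\ln(1+y)\leq y$, which is standard. The only points requiring care are bookkeeping ones, namely that the conversion factor $\ln(2)$ is positive so the inequality direction is preserved under the reduction, and that the interval endpoint is never used for tightness, since the bound holds throughout $y>0$.
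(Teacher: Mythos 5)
Your proof is correct, and it takes a genuinely different route from the paper's. The paper applies Taylor's theorem with the Lagrange form of the remainder directly to $\log_2(1+y)$ at order $N=1$, obtaining $\log_2(1+y)=\frac{y}{\ln(2)}-\frac{y^2}{2(1+c_y)^2\ln(2)}$ for some $0<c_y<y$, and concludes by noting that the subtracted remainder term is positive. You instead change base to reduce the claim to the standard inequality $\ln(1+y)\leq y$, which you prove by comparing integrands in $\ln(1+y)=\int_0^y\frac{dt}{1+t}\leq\int_0^y 1\,dt=y$ (with the monotonicity of $y-\ln(1+y)$ as an equivalent alternative). Both are sound one-step arguments; the difference is the machinery invoked. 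The paper's route matches its general toolkit --- Taylor expansions with explicit Lagrange remainders are the backbone of Section~\ref{sec:taylor} --- and as a by-product it exhibits the exact deficit $\frac{y^2}{2(1+c_y)^2\ln(2)}$, showing the bound is tight to second order near $y=0$. Your route is more elementary, avoiding Taylor's theorem entirely, and your observation that the inequality holds for all $y>0$ (so the hypothesis $0<y<1$ is not needed) is accurate; the paper's argument likewise never uses $y<1$.
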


\begin{proof}
Based on Taylor's theorem, we can write the following expansion for $\log_2(1+y)$:
\begin{align*}
\log_2(1+y)=\sum_{i=1}^{N}(-1)^{i-1}\frac{y^i}{i\ln(2)}+\frac{(-1)^Ny^{N+1}}{(N+1)(1+c_y)^{N+1}\ln(2)}
\end{align*}
where $0<c_y<y$. For $N=1$ we obtain:
\begin{align*}
\log_2(1+y)=\frac{y}{\ln(2)}-\frac{y^2}{2(1+c_1)^2\ln(2)}
\end{align*}
The quantity $\frac{y^2}{2(1+c_1)^2\ln(2)}$ is positive and hence $\log_2(1+y)\leq \frac{y}{\ln(2)}$. 

\end{proof}

\begin{proposition}\label{prop:arctan_condition}
For $x\in \real\setminus\{0\}$ we have $|\frac{x}{(1+x^2)\arctan(x)}|<1$.  
\end{proposition}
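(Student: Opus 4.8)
The plan is to exploit the symmetry of the expression and then reduce the inequality to a one-variable monotonicity argument. First I would observe that the quantity $\left|\frac{x}{(1+x^2)\arctan(x)}\right|$ is even in $x$: the factor $1+x^2$ is even, while both $x$ and $\arctan(x)$ are odd, so the ratio inside the absolute value is itself an even function. Hence it suffices to prove the bound for $x>0$, where $x$, $1+x^2$, and $\arctan(x)$ are all strictly positive and the absolute value can be dropped.

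For $x>0$ the claim $\frac{x}{(1+x^2)\arctan(x)}<1$ is equivalent to $x<(1+x^2)\arctan(x)$, so I would introduce the auxiliary function
\begin{align*}
g(x)=(1+x^2)\arctan(x)-x
\end{align*}
and aim to show $g(x)>0$ for all $x>0$. Since $g(0)=0$, it is enough to prove that $g$ is strictly increasing on $(0,\infty)$, which I would do by differentiating.

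The key step is the derivative computation, where a pleasant cancellation occurs:
\begin{align*}
g'(x)=2x\arctan(x)+(1+x^2)\cdot\frac{1}{1+x^2}-1=2x\arctan(x).
\end{align*}
For $x>0$ both factors $2x$ and $\arctan(x)$ are positive, so $g'(x)>0$, giving $g(x)>g(0)=0$ and therefore the desired strict inequality. Combining this with the evenness observation closes the case $x<0$ as well, covering all of $\real\setminus\{0\}$.

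I do not expect any serious obstacle here; the only thing to get right is the derivative, and in particular the exact cancellation of the $\frac{1+x^2}{1+x^2}$ term against the $-1$, which is what makes $g'$ manifestly positive rather than merely bounded. Everything else is routine: the positivity of $\arctan(x)$ for $x>0$, the boundary value $g(0)=0$, and the reduction by symmetry are all immediate.
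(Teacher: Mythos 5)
Your proof is correct, and it takes a genuinely different route from the paper's. The paper differentiates the ratio itself, $f(x)=\frac{x}{(1+x^2)\arctan(x)}$, obtaining $f'(x)=\frac{\arctan(x)(1-x^2)-x}{(1+x^2)^2(\arctan(x))^2}$, and then must work to determine the sign of the numerator: for $0<x<1$ this requires a Taylor-series manipulation of $\arctan(x)(1-x^2)-x$, the case $x\geq 1$ is handled separately, the case $x<0$ uses the antisymmetry $f'(-x)=-f'(x)$, and the bound finally comes from $\lim_{x\to 0}f(x)=1$ together with monotonicity. You instead use evenness to reduce to $x>0$, cross-multiply (legitimate there, since all factors are positive) to the equivalent claim $g(x)=(1+x^2)\arctan(x)-x>0$, and exploit the cancellation $g'(x)=2x\arctan(x)+1-1=2x\arctan(x)>0$; with $g(0)=0$ (e.g.\ via the mean value theorem, $g(x)=g'(\xi)x>0$ for some $\xi\in(0,x)$) this finishes the argument. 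Your version is shorter and more elementary: it needs no series expansion, no case split at $x=1$, and no limit evaluation, and the strictness of the inequality is immediate rather than inherited from a limit that is approached but not attained. What the paper's approach buys in exchange is slightly more information, namely that the sensitivity measure $f$ is monotone on each side of the origin with supremum exactly $1$ at $x\to 0$; but for the proposition as stated, your argument fully suffices.
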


\begin{proof}
We analyze the derivative of $f(x)=|\frac{x}{(1+x^2)\arctan(x)}|=\frac{x}{(1+x^2)\arctan(x)}$ and find the intervals in which $f(x)$ is increasing/decreasing. 
\begin{align*}
f'(x)=\frac{\arctan(x)(1-x^2)-x}{(1+x^2)^2(\arctan (x))^2}
\end{align*}
We analyze the derivative in the following cases:
\begin{enumerate}
\item
Suppose $0<x<1$. We use the Taylor expansion of $\arctan(x)$ to rewrite $\arctan(x)(1-x^2)-x$:
\begin{align*}
\arctan(x)(1-x^2)-x&=\taylorarctan{0}{\infty}{x}-x-x^2\arctan(x)\\
&=\taylorarctan{1}{\infty}{x}-x^2\arctan(x)\\
&=\sum_{i=1}^{\infty}{-x^{4i-1}(\frac{1}{4i-1}-\frac{x^2}{4i+1})}-x^2\arctan(x)
\end{align*}
The term $-x^2\arctan(x)$ is negative and $-x^{4i-1}(\frac{1}{4i-1}-\frac{x^2}{4i+1})$ is negative for $i\geq 1$ and $0<x<1$. Thus, the summation $\sum_{i=1}^{\infty}{-x^{4i-1}(\frac{1}{4i-1}-\frac{x^2}{4i+1})}-x^2\arctan(x)$ is negative and $f'(x)<0$. The function $f(x)$ is decreasing for $0<x<1$.
\item
Suppose $x\geq 1$. In this case, $\arctan(x)(1-x^2)-x<0$. Thus, $f'(x)<0$ and $f(x)$ is decreasing for $x\geq 1$.
\item
Suppose $x<0$. Since $f'(-x)=-f'(x)$, we can conclude from the first two cases that $f(x)$ is increasing when $x<0$.
\end{enumerate}
The case analysis shows that $f(x)$ is bounded from above by $\lim_{x\to 0}f(x)=1$.

\end{proof}

\begin{proposition}\label{prop:sin_sign}
For any $y\in (-1,0) \cup (0,1)$ and $i\in \nat$ the following inequalities hold:
\begin{align*}
\frac{(-1)^{2i}}{(4i+1)!}y^{4i+1}+\frac{(-1)^{2i+1}}{(4i+3)!}y^{4i+3}>0 \qquad \text{if }y>0 \\
\frac{(-1)^{2i}}{(4i+1)!}y^{4i+1}+\frac{(-1)^{2i+1}}{(4i+3)!}y^{4i+3}<0 \qquad \text{if }y<0
\end{align*}
\end{proposition}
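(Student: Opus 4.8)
The plan is to recognize this pair of summands as two consecutive terms of the sine Taylor expansion and to factor them into a product whose sign is manifest. First I would simplify the signs: since $(-1)^{2i}=1$ and $(-1)^{2i+1}=-1$, the left-hand side equals
\[
\frac{y^{4i+1}}{(4i+1)!}-\frac{y^{4i+3}}{(4i+3)!}.
\]
Using the identity $(4i+3)!=(4i+1)!\,(4i+2)(4i+3)$, I would then factor out $\frac{y^{4i+1}}{(4i+1)!}$ to rewrite this as
\[
\frac{y^{4i+1}}{(4i+1)!}\left(1-\frac{y^2}{(4i+2)(4i+3)}\right).
\]

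The key step is to argue that the bracketed factor is strictly positive. Since $y\in(-1,0)\cup(0,1)$, we have $y^2<1$, while for every $i\in\nat$ the denominator satisfies $(4i+2)(4i+3)\geq 2\cdot 3=6>1$. Hence $\frac{y^2}{(4i+2)(4i+3)}<1$, so the factor in parentheses is positive. With that established, the sign of the whole product is governed entirely by $y^{4i+1}$; because $4i+1$ is odd, $y^{4i+1}$ shares the sign of $y$, and the expression is therefore positive when $y>0$ and negative when $y<0$, as claimed.

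I do not expect a genuine obstacle in this argument. The only point requiring attention is the uniform lower bound $(4i+2)(4i+3)\geq 6$ on the denominator, which already covers the extreme case $i=0$ and makes any case distinction on $i$ unnecessary; once that bound is in place, the positivity of the correction factor and the parity of the exponent give the result immediately.
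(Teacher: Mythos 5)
Your proof is correct and follows essentially the same route as the paper: both factor the sum as $\frac{y^{4i+1}}{(4i+1)!}\bigl(1-\frac{y^2}{(4i+2)(4i+3)}\bigr)$, observe that the parenthesized factor is at least $1-\frac{1}{6}>0$, and read off the sign from the odd power $y^{4i+1}$. No gaps; your explicit justification of the bound $(4i+2)(4i+3)\geq 6$ is just a slightly more spelled-out version of the paper's one-line estimate.
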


\begin{proof}
We can rewrite the summation as follows:
\begin{align*}
\frac{(-1)^{2i}}{(4i+1)!}y^{4i+1}+\frac{(-1)^{2i+1}}{(4i+3)!}y^{4i+3}=\frac{(-1)^{2i}}{(4i+1)!}y^{4i+1}(1-\frac{y^2}{(4i+2)(4i+3)})
\end{align*}
Since $1-\frac{y^2}{(4i+2)(4i+3)}\geq 1-\frac{1}{6}>0$ the sign of the summation is determined by the sign of $y^{4i+1}$.

\end{proof}

\begin{proposition}\label{prop:cos_sign}
For any $y\in (-1,0)\cup (0,1)$ and $i\in \nat$ the following inequality holds:
\begin{align*}
\frac{(-1)^{2i}}{(4i)!}y^{4i}+\frac{(-1)^{2i+1}}{(4i+2)!}y^{4i+2}>0
\end{align*}
\end{proposition}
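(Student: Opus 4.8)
The plan is to mirror the argument used for Proposition~\ref{prop:sin_sign}: collapse the alternating signs, factor out a common positive power of $y$, and then control the remaining factor with the hypothesis $|y|<1$. First I would observe that $(-1)^{2i}=1$ and $(-1)^{2i+1}=-1$, so the quantity in question is
\begin{align*}
\frac{1}{(4i)!}y^{4i}-\frac{1}{(4i+2)!}y^{4i+2}=\frac{y^{4i}}{(4i)!}\left(1-\frac{y^2}{(4i+1)(4i+2)}\right),
\end{align*}
where I have pulled out $\frac{y^{4i}}{(4i)!}$ and used $(4i+2)!=(4i+2)(4i+1)(4i)!$.

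Next I would argue that both factors are strictly positive. The factor $\frac{y^{4i}}{(4i)!}$ is positive because $4i$ is even and $y\neq 0$, so $y^{4i}>0$. For the parenthesized factor, I would use $|y|<1$ to get $y^2<1$, together with $(4i+1)(4i+2)\geq 2$ for every $i\in\nat$, to conclude
\begin{align*}
\frac{y^{2}}{(4i+1)(4i+2)}<\frac{1}{2},
\end{align*}
whence $1-\frac{y^2}{(4i+1)(4i+2)}>\frac{1}{2}>0$. Multiplying the two positive factors yields the claimed strict inequality.

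The only point requiring care is the extremal case $i=0$, where $(4i+1)(4i+2)=2$ attains the smallest value of the denominator; here the bound $\frac{y^2}{2}<\frac12$ is still strict precisely because the hypothesis gives $y^2<1$ rather than $y^2\leq 1$. Apart from this, the argument is entirely routine, so I do not anticipate any genuine obstacle; the work is essentially the same factoring trick already exploited for the sine terms, and it is the strict inequality $|y|<1$ that drives the conclusion.
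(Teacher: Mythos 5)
Your proof is correct and follows essentially the same route as the paper's: factor the sum as $\frac{y^{4i}}{(4i)!}\left(1-\frac{y^2}{(4i+1)(4i+2)}\right)$ and bound the second factor below by $\frac{1}{2}$ using $y^2<1$ and $(4i+1)(4i+2)\geq 2$. Your added remarks about strict positivity of $y^{4i}$ for $y\neq 0$ and the extremal case $i=0$ only make explicit what the paper leaves implicit.
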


\begin{proof}
We can rewrite the left hand side of the inequality as follows:
\begin{align*}
\frac{(-1)^{2i}}{(4i)!}y^{4i}+\frac{(-1)^{2i+1}}{(4i+2)!}y^{4i+2}=\frac{(-1)^{2i}}{(4i)!}y^{4i}(1-\frac{1}{(4i+1)(4i+2)}y^2)
\end{align*}
Since $1-\frac{1}{(4i+1)(4i+2)}y^2\geq 1-\frac{1}{2}>0$, the inequality holds. 

\end{proof}

\begin{proposition}\label{prop:sin_condition}
For $x\in(-1,1)\setminus\{0\}$ we have $|x\cdot\cot(x)|<1$.
\end{proposition}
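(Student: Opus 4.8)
The plan is to exploit the symmetry of the function and reduce to the positive half-interval, and then to reduce the desired bound to a single elementary inequality relating $\sin$ and $\cos$ that can be settled by a monotonicity argument.

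First I would observe that $f(x) = x\cot(x) = x\cos(x)/\sin(x)$ is an even function, being the product of the two odd functions $x$ and $\cot(x)$. Hence it suffices to prove $|f(x)| < 1$ for $x \in (0,1)$. On this interval we have $0 < x < 1 < \frac{\pi}{2} < \pi$, so that $\sin(x) > 0$ and $\cos(x) > 0$; consequently $x\cot(x) > 0$ and the absolute value can be dropped. Thus the statement reduces to showing the strict upper bound $x\cos(x) < \sin(x)$ on $(0,1)$.

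It then remains to prove this inequality, equivalently $g(x) := \sin(x) - x\cos(x) > 0$ for $x \in (0,1)$. I would note that $g(0) = 0$ and compute $g'(x) = \cos(x) - \bigl(\cos(x) - x\sin(x)\bigr) = x\sin(x)$, which is strictly positive for $x \in (0,1)$ by the sign observations above. Since $g$ vanishes at $0$ and is strictly increasing on $[0,1)$, we obtain $g(x) > 0$ throughout $(0,1)$, hence $0 < x\cot(x) < 1$ there and, by evenness, $|x\cot(x)| < 1$ on all of $(-1,1)\setminus\{0\}$.

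There is essentially no hard step in this argument: the computation of $g'$ and the ensuing sign analysis are routine, and in fact simpler than the Taylor-series case analysis used for Proposition~\ref{prop:arctan_condition}. The only point requiring minor care is confirming the positivity of $\sin(x)$ and $\cos(x)$ on $(0,1)$, which makes both the dropping of the absolute value and the sign of $g'(x)$ unambiguous; the restriction to the base interval $(-1,1)$ is precisely what guarantees this, since $1 < \frac{\pi}{2}$.
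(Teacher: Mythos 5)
Your proof is correct, but it takes a different route from the paper's. The paper works with $f(x)=x\cot(x)$ directly: it computes
\begin{align*}
f'(x)=\frac{\sin(2x)-2x}{2\sin^2(x)},
\end{align*}
invokes the standard bound $|\sin(2x)|\leq|2x|$ to conclude that $f$ is increasing on $(-1,0)$ and decreasing on $(0,1)$, and then bounds $f$ by its value at the removable singularity, $\lim_{x\to 0}f(x)=1$. You instead clear the denominator: after reducing by evenness and positivity to the inequality $x\cos(x)<\sin(x)$ on $(0,1)$, you introduce $g(x)=\sin(x)-x\cos(x)$, note $g(0)=0$ and $g'(x)=x\sin(x)>0$, and conclude. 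Your version is more self-contained --- it does not cite the inequality $|\sin(2x)|\leq|2x|$ (which is of the same nature as what is being proved, and which the paper leaves unjustified), and it avoids any limit evaluation at the singularity, replacing it with a monotonicity argument anchored at the honest value $g(0)=0$. What the paper's argument buys in exchange is slightly more structural information: it exhibits $f$ as monotone on each side of $0$ with supremum exactly $1$ attained only in the limit $x\to 0$, making the sharpness of the constant $1$ explicit, which your argument establishes only implicitly.
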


\begin{proof}
For the interval $x\in(-1,1)\setminus\{0\}$, we have $|x\cdot\cot(x)|=x\cdot\cot(x)$. We analyze the derivative of $f(x)=x\cdot\cot(x)$ in $(-1,1)\setminus\{0\}$.
\begin{align*}
f'(x)=\cot(x)-x(1+\cot^2(x))=\frac{\sin(x)\cos(x)-x}{\sin^2(x)}=\frac{\sin(2x)-2x}{2\sin^2(x)}
\end{align*}
The point $x=0$ is a critical point for $f(x)$. Since $|\sin(2x)|\leq |2x|$, $f(x)$ is increasing in $(-1,0)$ and decreasing in $(0,1)$. Thus, $f(x)$ is bounded from above by $\lim_{x\to 0}f(x)=1$. 

\end{proof}

\begin{proposition}\label{prop:cos_condition}
For $x\in(-1,1)$ we have $|x\cdot\tan(x)|<\tan(1)$.
\end{proposition}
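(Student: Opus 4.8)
The plan is to mirror the strategy used for Proposition~\ref{prop:sin_condition}: first pin down the sign of the expression to dispose of the absolute value, then shrink the domain using symmetry, and finally settle the bound by a monotonicity argument based on the derivative.

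First I would observe that $g(x)=x\tan(x)$ is an even function, since $(-x)\tan(-x)=x\tan(x)$, and that it is nonnegative on $(-1,1)$ because $x$ and $\tan(x)$ always share the same sign there, both vanishing only at $x=0$. Hence $|x\tan(x)|=x\tan(x)$ on the whole interval, and by evenness it suffices to bound $g$ on $[0,1)$.

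Next I would compute the derivative $g'(x)=\tan(x)+x(1+\tan^2(x))$. For $x\in(0,1)$ both summands are strictly positive, so $g'(x)>0$ and $g$ is strictly increasing on $[0,1)$ (it is continuous there and has positive derivative on the interior). Consequently $g(x)<\lim_{x\to 1^-}g(x)=\tan(1)$ for every $x\in[0,1)$, and combining this with evenness yields $|x\tan(x)|<\tan(1)$ throughout $(-1,1)$, as desired.

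The step needing the most care is not a calculation but the logical point that the bound is strict and is approached only in the limit at the excluded endpoint $x=1$. Strict monotonicity on $[0,1)$ together with the exclusion of $x=1$ from the domain is precisely what produces the strict inequality; since $\tan$ is continuous at $x=1$, the limit $\lim_{x\to 1^-}x\tan(x)=\tan(1)$ is immediate and no delicate estimate is required. This contrasts slightly with the sine case, where the supremum sits at the interior critical point $x=0$, whereas here it sits at the boundary of the interval.
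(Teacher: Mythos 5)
Your proof is correct and follows essentially the same route as the paper's: both reduce $|x\cdot\tan(x)|$ to $x\cdot\tan(x)$ and then bound it by a monotonicity analysis of the derivative $f'(x)=\tan(x)+x(1+\tan^2(x))$, with the supremum $\tan(1)$ attained only in the limit at the excluded endpoints. The only difference is cosmetic: you use evenness to restrict attention to $[0,1)$ and note that both terms of the derivative are positive there, whereas the paper rewrites the derivative as $\frac{\sin(2x)+2x}{2\cos^2(x)}$ and checks the sign of $\sin(2x)+2x$ separately on $(-1,0)$ and $(0,1)$.
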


\begin{proof}
For the interval $x\in(-1,1)$, we have $|x\cdot\tan(x)|=x\cdot\tan(x)$. To determine an upper-bound for $x\cdot\tan(x)$, we analyze the derivative of $f(x)=x\cdot\tan(x)$:
\begin{align*}
f'(x)=\tan(x)+x(1+\tan^2(x))=\frac{\sin(x)\cos(x)+x}{\cos^2(x)}=\frac{\sin(2x)+2x}{2\cos^2(x)}
\end{align*}
The quantity $\sin(2x)+2x$ is negative in $(-1,0)$ and positive in $(0,1)$. Thus, $f(x)$ is decreasing in $(-1,0)$ and increasing in $(0,1)$. We conclude that $f(x)<\tan(1)$.

\end{proof}

\begin{lemma}\label{lm:power}
Let $x$ be a real number represented by $(m,n,p)$. Then $x^i$ can be represented by $(m^i,n^i,p-2\lceil \log_2^i \rceil)$. 
\end{lemma}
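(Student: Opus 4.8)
The plan is to prove the statement by strong induction on the exponent $i$, realizing $x^i$ through a balanced binary tree of multiplications so that the number of multiplications along any root-to-leaf path is $\lceil \log_2 i\rceil$ and each multiplication costs exactly two bits of precision via Theorem~\ref{thm:mult}. Before starting, I would record one elementary monotonicity fact that I use repeatedly: if $(m,n,p')$ represents a real number in the sense of Definition~\ref{def:realrep} and $p''\leq p'$, then $(m,n,p'')$ also represents it, because the bound $|x-\frac{m}{n}|<|\frac{m}{n}|\frac{1}{2^{p'}}$ only weakens as the precision decreases. This lets me feed two approximations of unequal precision into Theorem~\ref{thm:mult}, which requires both operands to carry the \emph{same} precision: I simply degrade the sharper one to the precision of the coarser one first.

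For the induction, the base case $i=1$ is immediate, since $x^1=x$ is represented by $(m,n,p)$ and $2\lceil\log_2 1\rceil=0$. For the step, given $i\geq 2$ I would split $i=a+b$ with $a=\lceil i/2\rceil$ and $b=\lfloor i/2\rfloor$, both lying in $\{1,\ldots,i-1\}$. The induction hypothesis represents $x^a$ by $(m^a,n^a,p-2\lceil\log_2 a\rceil)$ and $x^b$ by $(m^b,n^b,p-2\lceil\log_2 b\rceil)$. Since $a\geq b$ forces $\lceil\log_2 a\rceil\geq\lceil\log_2 b\rceil$, the monotonicity fact lets me view both at the common precision $p-2\lceil\log_2 a\rceil$; Theorem~\ref{thm:mult} then represents $x^a\cdot x^b=x^{a+b}=x^i$ by $(m^a m^b,n^a n^b,p-2\lceil\log_2 a\rceil-2)=(m^i,n^i,p-2\lceil\log_2 a\rceil-2)$, where the coefficient identities $m^a m^b=m^i$ and $n^a n^b=n^i$ are automatic once $a+b=i$.

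It remains to check that this achieved precision is at least the claimed one, i.e. $p-2\lceil\log_2 a\rceil-2\geq p-2\lceil\log_2 i\rceil$, equivalently $\lceil\log_2\lceil i/2\rceil\rceil+1\leq\lceil\log_2 i\rceil$. Writing $k=\lceil\log_2 i\rceil$ so that $i\leq 2^k$, I have $a=\lceil i/2\rceil\leq 2^{k-1}$, whence $\lceil\log_2 a\rceil\leq k-1$, which is exactly the desired inequality; a final appeal to the monotonicity fact weakens $x^i$ from precision $p-2\lceil\log_2 a\rceil-2$ down to the stated $p-2\lceil\log_2 i\rceil$. The only real obstacle is bookkeeping: keeping the ceiling-of-logarithm estimate honest at the single step $\lceil\log_2\lceil i/2\rceil\rceil+1\leq\lceil\log_2 i\rceil$, and ensuring that Theorem~\ref{thm:mult} is only ever applied to equal-precision operands, which the preliminary monotonicity observation is designed to guarantee.
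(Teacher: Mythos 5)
Your proof is correct and follows essentially the same route as the paper: the balanced splitting $x^i = x^{\lceil i/2\rceil}\cdot x^{\lfloor i/2\rfloor}$ with two units of precision lost per multiplication level, yielding the recursion $P(i)\leq P(\lceil i/2\rceil)+2$ that unwinds to $2\lceil\log_2 i\rceil$. Your version is in fact more careful than the paper's terse argument, since you make explicit two facts the paper silently uses: that a representation at precision $p'$ remains valid at any precision $p''\leq p'$ (needed because Theorem~\ref{thm:mult} requires equal-precision operands), and the inequality $\lceil\log_2\lceil i/2\rceil\rceil+1\leq\lceil\log_2 i\rceil$.
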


\begin{proof}
We can apply Theorem~\ref{thm:mult} to calculate $x^i=x^{\lceil \frac{i}{2} \rceil}\times x^{\lfloor \frac{i}{2} \rfloor}$. Let $k=\lceil \log_2^i \rceil$ and $P(i)$ denote the precision that we lose by calculating $x^i$. From Theorem~\ref{thm:mult} we can write:
\begin{align*}
P(i)\leq P(\lceil \frac{i}{2} \rceil)+2\leq P(\lceil \frac{i}{2^k} \rceil)+2k=2k
\end{align*}
Thus, we lose $2k=2\lceil \log_2^i \rceil$ units of  precision by calculating $x^i$.

\end{proof}

\end{document}